\definecolor{antiquefuchsia}{rgb}{0.57, 0.36, 0.51}
\definecolor{azure}{rgb}{0.0, 0.5, 1.0}
\renewcommand*{\backref}[1]{}
\renewcommand*{\backrefalt}[4]{%
    \ifcase #1 (Not cited.)%
    \or        (Cited on page~#2.)%
    \else      (Cited on pages~#2.)%
    \fi}
\def\th@plain{%
	\thm@notefont{}
	\itshape 
}
\def\th@definition{%
	\thm@notefont{}
	\normalfont 
}
\numberwithin{equation}{section}
\newtheorem{theorem}{Theorem}[section]
\newtheorem{lemma}[theorem]{Lemma}
\newtheorem{proposition}[theorem]{Proposition}
\theoremstyle{definition}
\newtheorem{definition}[theorem]{Definition}
\theoremstyle{remark}
\newtheorem{remark}[theorem]{Remark}
\newcommand{\N}{\mathbb{N}}
\newcommand{\R}{\mathbb{R}}
\DeclareMathOperator{\dist}{dist}
\DeclareMathOperator{\vol}{vol}
\DeclareMathOperator{\Sing}{Sing}
\newcommand{\eps}{\varepsilon}
\title[Symmetric Log-epiperimetric Inequality for Harmonic Maps]{Symmetric Log-epiperimetric Inequality for Harmonic Maps with Analytic Target and Applications} 
\author[Riccardo Caniato]{Riccardo Caniato}
\address{California Institute of Technology, Department of Mathematics, 1200 E California Blvd, MC 253-37, CA 91125, United States of America}
\email{rcaniato@caltech.edu}
\author[Davide Parise]{Davide Parise}
\address{University of California San Diego, Department of Mathematics, 9500 Gilman Drive \#0112, La Jolla, CA 92093-0112, United States of America}
\email{dparise@ucsd.edu}
\date{\today} 
\begin{document}
\begin{abstract}
    We establish a direct symmetric (log)-epiperimetric inequality for harmonic maps with analytic target and we leverage on this result to achieve a new proof of Simon's celebrated uniqueness of tangents with isolated singularity for energy minimizing harmonic maps. Moreover, we show that tangents at infinity of energy minimizing harmonic maps with suitably controlled energy growth are always unique, by exploiting the lower bound entailed in the symmetric (log)-epiperimetric inequality.
\end{abstract}
\maketitle
\tableofcontents
\section{Introduction}

\subsection*{Epiperimetric inequalities in geometric variational problems}
Given a geometric functional $\mathscr{F}$ on a domain with boundary---interpreted as the ``energy'' of the problem---epiperimetric type inequalities provide a precise quantitative estimate on the energy sub-optimality of the natural homogeneous extension of a given boundary datum inside the domain. This information allows to obtain rates of decay, or growth, for the energy density of $\mathscr{F}$-minimizers, thus providing a suitable framework to study the uniqueness of their tangents either at a point or at infinity. 

Introduced by Reifenberg in his seminal work \cite{Reifenberg}, epiperimetric inequalities have played a central role in the study of minimal surfaces. In particular, they were originally developed to establish the analyticity of solutions to the Plateau problem, in the form posed in \cite{ReifenbergAnalytic}. Later on, White exploited this technique in \cite{white-unique} to prove uniqueness of tangent cones for two-dimensional area-minimizing integral currents without boundary in $\mathbb{R}^n$. Building on these ideas, Rivière introduced in \cite{Riviere} the concept of a \textit{lower} epiperimetric inequality, thereby establishing a rate of decay for the decreasing density of an area-minimizing integral $2$-cycle in $\mathbb{R}^n$. One of the key insights arising from Rivière’s work was the identification of the \textit{splitting before tilting} phenomenon, a concept that proved essential in his joint regularity results with Tian for $1-1$ integral cycles \cite{RiviereTian}. This notion has since been extensively applied in the regularity theory developed by De Lellis, Spadaro, and Spolaor, who extended these techniques to area-minimizing and semicalibrated currents (see \cite{CenterManifold, BranchedCenterManifold, Spolaor}). More recently, in the context of two-dimensional almost minimal currents, these authors established a new epiperimetric inequality \cite{DeLellisSpadaroSpolaor}, thereby generalizing White’s earlier results.

Subsequently, epiperimetric inequalities have attracted considerable attention in the setting of free boundary problems, in particular due to the efforts of Spolaor and Velichkov. For instance, in \cite{SpolaorVelichkov}, these two authors adapted \textit{direct} epiperimetric inequalities for various free boundary problems in two dimensions. 
Later, in collaboration with Colombo, the same authors proved a \textit{logarithmic} epiperimetric inequality for obstacle-type problems \cite{ColomboSpolaorVelichkov1, ColomboSpolaorVelichkov2}. Another significant advancement came in \cite{ESVduke}, where Engelstein, Spolaor, and Velichkov introduced a novel method of proof: by reducing the problem to an estimate for a functional defined on the sphere and analyzing its gradient flow, they offered a new perspective on proving such inequalities. While their focus was on the Alt-Caffarelli functional, this approach has since been instrumental in the analysis of multiplicity-one stationary cones with isolated singularities \cite{EngelsteinSpolaorVelichkov}, leading to new $\varepsilon$-regularity theorems for almost minimizers. Besides, the same method was exploited by the authors of the present paper to prove uniqueness of tangents for Yang--Mills connections with isolated singularities \cite{CaniatoParise}. For further developments along these lines, see also \cite{SpolaorVelichkovEng}. Notably, in \cite{SymmetricEdelenSpolaorVelichkov}, the authors introduced the notion of \textit{symmetric} logarithmic epiperimetric inequality for the Alt-Caffarelli functional and for almost minimizing currents, which entails at once a bi-later (upper and lower) control on the energy density. 

A unifying feature of these works is their reliance on \textit{direct} constructions of competitors—explicitly built functions that allow one to prove decay estimates around singular points. Such direct methods stand in contrast to the alternative approach of proving epiperimetric inequalities via contradiction, a strategy that typically employs linearization techniques. Contradiction-based proofs have been used effectively in various settings, albeit often limited to regular points or singularities under strong structural assumptions. Notable examples include Taylor’s work on area-minimizing flat chains modulo 3 and $(\mathbf{M}, \varepsilon, \delta)$-minimizers \cite{TaylorFlatChains, TaylorSoapBubble, TaylorEllipsoidal}. For free boundary problems, Weiss’s classical contribution \cite{Weiss} established epiperimetric inequalities at flat singular points and along the top stratum of the singular set in the obstacle problem. See also \cite{TaylorCapillarity} for the very first instance of an epiperimetric inequality for free boundary problems. For the thin obstacle problem, we refer to \cite{FocardiSpadaro, GarofaloPetrosyanMariana}, while in the context of harmonic measures, the work of Badger, Engelstein, and Toro \cite{Badger} inverstigated epiperimetric inequalities for functions that do not minimize any energy.  

\subsection*{Statement of the main results} 
Inspired by \cite{EngelsteinSpolaorVelichkov, ESVduke, SymmetricEdelenSpolaorVelichkov, CaniatoParise}, the aim of this article is to prove a symmetric log-epiperimetric inequality for harmonic maps with real-analytic target. To the best of the authors' knowledge this is the first instance of such an inequality in this geometric setting. More precisely, we prove the following--see Section \ref{sec: preliminaries} for the notations appearing in the statement.
\begin{theorem}\label{Theorem: (log)-epiperimetric inequality for harmonic maps}
	Let $N\subset\R^k$ be a closed real-analytic submanifold in $\R^k$ and let $n\in\N$ be such that $n\ge 3$. Let $u_0\in C^{\infty}(\mathbb{S}^{n-1},N)$ be a harmonic map on $\mathbb{S}^{n-1}$ and let $\tilde u_0\in W^{1,2}(\mathbb{B}^n,N)$ be its 0-homogeneous extension inside $\mathbb{B}^n$, given by
	\begin{align*}
		\tilde u_0:=\bigg(\frac{\cdot}{\lvert\,\cdot\,\rvert}\bigg)^*u_0.
	\end{align*}
	There exist constant $\varepsilon, \delta > 0$, and $\gamma \in [0, 1)$ depending on the dimension and $u_0$ such that the following holds. If $u\in C^{2,\alpha}(\mathbb{S}^{n-1},N)$ is such that
	\begin{align*}
		\|u-u_0\|_{C^{2,\alpha}(\mathbb{S}^{n-1})}<\delta
	\end{align*}
	then there exists $\hat u\in W^{1,2}(\mathbb{B}^n,N)$ such that $\hat  u|_{\mathbb{S}^{n-1}}=u$ and 
	\begin{align} \label{equation: main epiperimetric inequality}
		\mathscr{E}_{\mathbb{B}^n}(\hat u\,;\tilde u_0)\le\mathscr{E}_{\mathbb{B}^n}(\tilde u\,;\tilde u_0)-\varepsilon\lvert\mathscr{E}_{\mathbb{B}^n}(\tilde u\,;\tilde u_0)\rvert^{1+\gamma},
	\end{align}
	where $\tilde u\in W^{1,2}(\mathbb{B}^n,N)$ is the $0$-homogeneous extension of $u$ inside $\mathbb{B}^n$. If the kernel of the second variation is integrable, we can take $\gamma = 0$. 
\end{theorem}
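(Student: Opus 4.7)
The plan is to follow the direct-competitor strategy of \cite{ESVduke, CaniatoParise, SymmetricEdelenSpolaorVelichkov}: construct $\hat u$ explicitly from a time-rescaled harmonic map flow on the sphere and extract the quantitative improvement from a \L{}ojasiewicz--Simon inequality made available by the analyticity of $N$. After the logarithmic change of variables $r=e^{-t}$, any $\hat u\in W^{1,2}(\mathbb{B}^n,N)$ is encoded as a curve $t\mapsto W(t,\cdot)$ of spherical maps with $W(0,\cdot)=u$, and
\[
\int_{\mathbb{B}^n}|\nabla\hat u|^2 \;=\; \int_0^\infty e^{-(n-2)t}\bigl(E_{\mathbb{S}^{n-1}}(W(t))+\|\partial_t W(t)\|_{L^2}^2\bigr)\,dt,
\]
so that the $0$-homogeneous extension $\tilde u$ corresponds to the constant curve $W\equiv u$ and $\mathscr{E}_{\mathbb{B}^n}(\tilde u;\tilde u_0)=(n-2)^{-1}(E_{\mathbb{S}^{n-1}}(u)-E_{\mathbb{S}^{n-1}}(u_0))$. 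I would take $\hat u(r\theta):=W(-\log r,\theta)$, where $W$ solves the rescaled $L^2$-gradient flow $\partial_t W=-\lambda\,\nabla_{L^2}E_{\mathbb{S}^{n-1}}(W)$ with $W(0)=u$ for a small $\lambda>0$ to be chosen. Standard parabolic theory and the \L{}ojasiewicz--Simon convergence argument ensure that, for $\|u-u_0\|_{C^{2,\alpha}}$ small enough, $W$ exists globally and converges in $C^{2,\alpha}$ to some nearby harmonic map $u_*$.

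An integration by parts in $t$ against the weight $e^{-(n-2)t}$, combined with a Fubini swap and the identity $\frac{d}{dt}E_{\mathbb{S}^{n-1}}(W)=-\lambda\|\nabla_{L^2}E_{\mathbb{S}^{n-1}}(W)\|_{L^2}^2$, yields
\[
\mathscr{E}_{\mathbb{B}^n}(\hat u;\tilde u_0)-\mathscr{E}_{\mathbb{B}^n}(\tilde u;\tilde u_0) \;=\; \lambda\bigl(\lambda-\tfrac{1}{n-2}\bigr)\int_0^\infty e^{-(n-2)t}\|\nabla_{L^2}E_{\mathbb{S}^{n-1}}(W(t))\|_{L^2}^2\,dt.
\]
This quadratic in $\lambda$ is minimized at $\lambda=\frac{1}{2(n-2)}$, giving the strict improvement
\[
\mathscr{E}_{\mathbb{B}^n}(\tilde u;\tilde u_0)-\mathscr{E}_{\mathbb{B}^n}(\hat u;\tilde u_0) \;=\; \frac{1}{4(n-2)^2}\int_0^\infty e^{-(n-2)t}\|\nabla_{L^2}E_{\mathbb{S}^{n-1}}(W(t))\|_{L^2}^2\,dt\;\ge\;0.
\]

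The decisive step is to bound this integral from below by $\varepsilon|\mathscr{E}_{\mathbb{B}^n}(\tilde u;\tilde u_0)|^{1+\gamma}$. The \L{}ojasiewicz--Simon inequality granted by the analyticity of $N$ states that, in a $C^{2,\alpha}$-neighborhood of the attractor $u_*$,
\[
\|\nabla_{L^2}E_{\mathbb{S}^{n-1}}(v)\|_{L^2}\;\ge\;c\,|E_{\mathbb{S}^{n-1}}(v)-E_{\mathbb{S}^{n-1}}(u_*)|^{1-\theta},\qquad \theta\in(0,\tfrac12],
\]
with $\theta=\tfrac12$ when the kernel of the second variation is integrable. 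Together with the elementary reverse bound $\|\nabla_{L^2}E(v)\|_{L^2}^2\le C|E(v)-E(u_*)|$ near a critical point, an ODE comparison for $f(t):=E_{\mathbb{S}^{n-1}}(W(t))-E_{\mathbb{S}^{n-1}}(u_*)\ge 0$ shows that $f(t)\gtrsim f(0)$ on an initial time interval whose length is controlled by $f(0)$, and multiplying by the exponential weight then yields
\[
\int_0^\infty e^{-(n-2)t}\|\nabla_{L^2}E_{\mathbb{S}^{n-1}}(W(t))\|_{L^2}^2\,dt\;\gtrsim\;f(0)^{1+\gamma},\qquad \gamma=\tfrac{1-2\theta}{1-\theta}.
\]

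The main obstacle is the \emph{symmetric} nature of the statement, which demands that this improvement---so far stated in terms of $f(0)=E(u)-E(u_*)\ge 0$---be recast in terms of $|E(u)-E(u_0)|^{1+\gamma}$ even when $E(u)<E(u_0)$. When the kernel is integrable, the set of nearby harmonic maps is an analytic manifold on which $E$ is constant, so $E(u_*)=E(u_0)$ and $f(0)\equiv E(u)-E(u_0)$, delivering $\gamma=0$ for free. In the general non-integrable case, an analytic Lyapunov--Schmidt reduction at $u_0$ identifies the nearby critical set with the zero locus of a real-analytic map on a finite-dimensional subspace, on which the classical \L{}ojasiewicz inequality for real-analytic functions provides the control $|E(u_*)-E(u_0)|\lesssim|E(u)-E(u_0)|^{1+\gamma'}$ that closes the inequality with some exponent $\gamma\in[0,1)$.
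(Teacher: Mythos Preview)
Your approach is essentially the one the paper gives as its \emph{alternative} proof in Appendix~B: build the competitor from a time-rescaled harmonic-map flow on $\mathbb{S}^{n-1}$, integrate by parts against $e^{-(n-2)t}$, and feed in a \L{}ojasiewicz--Simon inequality. Your exact identity
\[
\mathscr{E}_{\mathbb{B}^n}(\hat u;\tilde u_0)-\mathscr{E}_{\mathbb{B}^n}(\tilde u;\tilde u_0)=\lambda\Bigl(\lambda-\tfrac{1}{n-2}\Bigr)\int_0^\infty e^{-(n-2)t}\|\nabla_{L^2}E_{\mathbb{S}^{n-1}}(W(t))\|_{L^2}^2\,dt
\]
is correct and cleaner than what the paper writes there. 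However, two steps do not go through as stated.

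\textbf{The ``reverse bound''.} The inequality $\|\nabla_{L^2}E(v)\|_{L^2}^2\le C|E(v)-E(u_*)|$ is false near a general critical point (take $E(x,y)=x^2-y^2$: on the line $x=y$ the energy vanishes while the gradient does not), and you give no argument for why it should hold along the particular trajectory $W(t)$. Without it you cannot guarantee $f(t)\gtrsim f(0)$ on an interval of length of order~$1$; if that interval shrinks like a power of $f(0)$, the exponent you extract is pushed to $\gamma\ge 1$. The paper's Appendix~B sidesteps this entirely: it freezes the flow at the first time the energy drop \emph{relative to $u_0$} halves, applies \L{}ojasiewicz--Simon at $u_0$ rather than at $u_*$, and never needs a reverse inequality.

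\textbf{The symmetric case.} This is a genuine gap. Your competitor produces a gain proportional to $\int e^{-(n-2)t}\|\nabla E(W)\|^2\,dt$, which is controlled by $f(0)=E(u)-E(u_*)$, whereas the statement asks for a gain of size $|E(u)-E(u_0)|^{1+\gamma}$. If $u$ lies very close to some nearby harmonic map $u_*\ne u_0$ with $E(u_*)<E(u_0)$, the flow converges almost instantly, your integral is tiny, while $|E(u)-E(u_0)|\approx|E(u_*)-E(u_0)|$ is not. Your closing claim $|E(u_*)-E(u_0)|\lesssim|E(u)-E(u_0)|^{1+\gamma'}$ is therefore false in this scenario (the left side is fixed, the right side tends to zero as $u\to u_*$). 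More fundamentally, a gradient-flow competitor can only push the energy down toward $E(u_*)$; it cannot go below $E(u_*)$, which is exactly what the symmetric inequality requires when $\mathscr{E}_{\mathbb{B}^n}(\tilde u;\tilde u_0)<0$. This is why the paper's \emph{main} proof (Section~4) does something structurally different: it spectrally decomposes $\varphi_u$ along the second variation at $u_0$, linearly damps the positive-eigenvalue part via $\eta_+$, linearly \emph{amplifies} the negative-eigenvalue (index) part via $\eta_-$, and runs a finite-dimensional normalized flow only on the kernel. The amplification of the index directions is precisely what manufactures the extra negative energy needed for the lower half of the symmetric inequality; your pure gradient-flow competitor has no analogue of this.
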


\begin{remark} \label{remark: two proofs}
    We give two different proofs of Theorem \ref{Theorem: (log)-epiperimetric inequality for harmonic maps}. The first one follows the strategies outlined in \cite{EngelsteinSpolaorVelichkov, ESVduke, SymmetricEdelenSpolaorVelichkov, CaniatoParise}, and occupies the main body of this article. We will outline it more carefully later in this introduction. On the other hand, the second one can be found in Appendix \ref{appendix: EP ineq and LS ineq}, and uses parabolic variational inequalities to construct the competitor, and consequently does not require to split the trace into different components and deal with them separately. Besides, it sheds some light on the relation between the (log)-epiperimetric and \L ojasiewicz--Simon inequalities by formalising that the latter implies the former under certain hypothesis. This was highlighted in \cite[Section 3]{colombo-spolaor-velichkov-3}, and we refer the reader to it for a more detailed discussion. In particular, the present paper complements the extensive literature on harmonic maps and \L ojasiewicz inequalities. We conclude by mentioning that a similar issue was emphasized in \cite[Remark 1.8]{CaniatoParise}. 
\end{remark}
\begin{remark}
	Note that in the simple setting of \textit{harmonic functions}, i.e. when $N = \mathbb{R}$, one can prove an epiperimetric inequality for both the Dirichlet energy, with respect to $0$-homogeneous extensions as in Theorem \ref{Theorem: (log)-epiperimetric inequality for harmonic maps}, and the Weiss energy, with respect to $\alpha$-homogeneous extensions. More precisely, in the second setting, for $f \in W_{\mathrm{loc}}^{1, 2}(\mathbb{R}^n)$, a point $x \in \mathbb{R}^n$, and $\alpha \in (0, \infty)$, we define the \textit{Weiss energy} to be 
	\begin{align*}
		W_\alpha(r, x; f) & := \frac{1}{r^{n - 2 + 2 \alpha}}\int_{B_r(x)} \vert \nabla f \vert^2 - \frac{\alpha}{r^{n - 1 + 2\alpha}} \int_{\partial B_r(x)} f^2. 
	\end{align*}
    A quick computation reveals that this quantity is monotone for harmonic functions and by expanding in spherical harmonics one can prove the following: for every $n \geq 2$, $\alpha > 0$, there exists $\varepsilon \in [0, 1)$ with the following significance. For $u \in W^{1, 2}(B_r(x))$ homogeneous of degree $\alpha$ about $x$, and $f$ the harmonic extension of $u \vert_{\partial B_r(x)}$ to $B_r(x)$, i.e. the trace of $u$ to the boundary, we have 
	\begin{equation*}
		W_\alpha(r, x; f) \leq (1 - \varepsilon) W_\alpha(r, x; u). 
	\end{equation*} See \cite[Appendix A]{Badger} for a proof of this result.  
\end{remark}
We note that Theorem \ref{Theorem: (log)-epiperimetric inequality for harmonic maps} is purely variational, and does not depend on any underlying PDE. As consequence of it, we obtain an alternative, albeit similar, proof of the celebrated uniqueness of tangent maps with isolated singularities due to Simon \cite{AsymptoticSimon}. See also \cite[Section 2.5]{LinWangBookHM}, and \cite[Section 3.10]{simon-book} for very elegant proofs of the following theorem. 
\begin{theorem} \label{Theorem: uniqueness of tangent maps with isolated singularities}
	Let $N\subset\R^k$ be a closed real-analytic submanifold in $\R^k$, and $\Omega \subset \mathbb{R}^n$ be an open set, with $n\ge 3$. Consider $u \in W^{1, 2}(\Omega, N)$ an energy minimizing harmonic map. Suppose that $\varphi$ is a tangent map at $u$ at some point $y \in \Sing(u)$, and assume that $\Sing(\varphi) = 0$. Then, $\varphi$ is the unique tangent map for $u$ at $y$. Furthermore, we have the expansion 
	\begin{equation*}
		u(y + r\omega) = \varphi(\omega) + \epsilon(r, \omega), 
	\end{equation*}
	for $\omega \in S^{n - 1}$, and where the error term $\epsilon$ satisfies the following asymptotics 
	\begin{equation*}
		\limsup_{r \rightarrow 0} \vert \log(r) \vert^\alpha \sup_{\omega \in S^{n - 1}} \vert \epsilon(r, \omega) \vert = 0, 
	\end{equation*}
	for some $\alpha > 0$. 
\end{theorem}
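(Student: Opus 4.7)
The plan is to convert Theorem~\ref{Theorem: (log)-epiperimetric inequality for harmonic maps} into a differential inequality for a suitable monotonicity quantity, derive decay for that quantity, and then convert the decay into both uniqueness of the tangent map and the quantitative expansion. WLOG take $y = 0$ and set $u_r(x) := u(rx)$. Letting $\Theta_\varphi := \frac{1}{n-2}\int_{S^{n-1}}|\nabla_T\varphi|^2$ denote the density of $\varphi$ at $0$, define
\[
W(r) := r^{2-n}\int_{B_r}|\nabla u|^2 - \Theta_\varphi = \mathscr{E}_{B_1}(u_r;\tilde\varphi).
\]
Price's monotonicity formula gives $W \geq 0$, $W$ non-decreasing, $W(r) \to 0$ as $r \to 0$ (the limit exists by monotonicity and equals $0$ by the hypothesis on $\varphi$), and $W'(r) = \frac{2}{r^{n-2}}\int_{\partial B_r}|\partial_\rho u|^2\,d\sigma$. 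Decomposing $\int_{B_1}|\nabla u_r|^2$ into radial and spherical parts and using $\int_{B_1}|\nabla\tilde v|^2 = \frac{1}{n-2}\int_{S^{n-1}}|\nabla_T v|^2$ for $0$-homogeneous extensions produces the Pohozaev-type identity
\[
T(r) - W(r) = \frac{r\,W'(r)}{2(n-2)},\qquad T(r) := \mathscr{E}_{B_1}(\tilde u_r;\tilde\varphi),
\]
which in particular forces $T(r) \geq W(r) \geq 0$.

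Since $\Sing(\varphi) = \{0\}$, the Schoen--Uhlenbeck $\varepsilon$-regularity theorem upgrades any subsequential convergence $u_{r_k} \to \varphi$ (which exists by hypothesis) to $C^{2,\alpha}$ convergence on $\partial B_1$. A standard continuity/bootstrap argument, analogous to those in \cite{EngelsteinSpolaorVelichkov, CaniatoParise}, then lets us assume $\|u_r|_{\partial B_1} - \varphi\|_{C^{2,\alpha}(S^{n-1})} < \delta$ for every small $r$, where $\delta$ is the threshold of Theorem~\ref{Theorem: (log)-epiperimetric inequality for harmonic maps}; this is established simultaneously with the decay below via a closed-set continuation. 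Applying that theorem to $u = u_r|_{\partial B_1}$ produces a competitor $\hat u_r$ with trace $u_r|_{\partial B_1}$ satisfying $\mathscr{E}_{B_1}(\hat u_r;\tilde\varphi) \leq T(r) - \varepsilon|T(r)|^{1+\gamma}$. Energy minimality of $u$ gives $W(r) \leq \mathscr{E}_{B_1}(\hat u_r;\tilde\varphi)$, hence $T(r) - W(r) \geq \varepsilon W(r)^{1+\gamma}$. Combined with the Pohozaev identity,
\[
r\,W'(r) \geq 2(n-2)\varepsilon\,W(r)^{1+\gamma},
\]
so integration yields $W(r) \leq C|\log r|^{-1/\gamma}$ when $\gamma \in (0,1)$ and $W(r) \leq C r^{c}$ when $\gamma = 0$.

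For uniqueness, we bound
\[
\|u_{r_1}|_{\partial B_1} - u_{r_2}|_{\partial B_1}\|_{L^2(S^{n-1})} \leq \int_{r_1}^{r_2} \sqrt{W'(r)/(2r)}\, dr \leq C\int_{r_1}^{r_2} W'(r)W(r)^{-(1+\gamma)/2}\,dr,
\]
where the second step replaces $1/r$ via $1/r \leq W'(r)/(2(n-2)\varepsilon\,W(r)^{1+\gamma})$ coming from the differential inequality. Since $\gamma \in [0,1)$, the right-hand integral equals a multiple of $W(r_2)^{(1-\gamma)/2} - W(r_1)^{(1-\gamma)/2}$, which vanishes as $r_1, r_2 \to 0$. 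Thus $\{u_r|_{\partial B_1}\}$ is Cauchy in $L^2(S^{n-1})$ and, upgraded via $\varepsilon$-regularity, also in $C^{2,\alpha}(S^{n-1})$; the limit must be $\varphi$ by subsequential convergence, proving uniqueness. The expansion rate follows from $\|u_r - \varphi\|_{L^2(S^{n-1})} \lesssim W(r)^{(1-\gamma)/2} \lesssim |\log r|^{-(1-\gamma)/(2\gamma)}$ (or arbitrarily fast polynomial decay in the $\gamma = 0$ case), passed from $L^2$ to $C^0(S^{n-1})$ by interpolation against the uniform $C^{2,\alpha}$ bounds provided by $\varepsilon$-regularity.

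The two main technical obstacles are: (a) the Pohozaev-type identity relating $T(r), W(r)$, and $W'(r)$, which requires careful handling of radial versus tangential energies together with Price's monotonicity formula; and (b) the continuity/bootstrap argument establishing $C^{2,\alpha}$-closeness of $u_r|_{\partial B_1}$ to $\varphi$ at all sufficiently small scales, which must be carried out concurrently with the decay of $W$ and crucially exploits the isolation of $\varphi$ among tangent maps of density $\Theta_\varphi$ (a consequence of $\Sing(\varphi) = \{0\}$) to prevent a different tangent map from appearing along the bootstrap.
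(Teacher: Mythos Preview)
Your approach is essentially the same as the paper's: establish $C^{2,\alpha}$-closeness of $u_r|_{\partial B_1}$ to $\varphi$ at all small scales via $\varepsilon$-regularity, apply the (log)-epiperimetric inequality to produce a competitor, combine energy minimality with the Pohozaev/monotonicity identity to obtain the differential inequality $rW'(r)\ge c\,W(r)^{1+\gamma}$, integrate to get logarithmic decay of the density defect, and then pass to uniqueness. The paper handles the closeness-at-all-scales step through an explicit lemma on comparable scales (Lemma~\ref{Lemma: blow-ups at comparable scales}) plus a dyadic argument, whereas you invoke a closed-set continuation; both are standard and equivalent in effect.

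There is one modest but genuine difference in the uniqueness step. The paper plugs the decay $e(\rho)\lesssim|\log\rho|^{-1/\gamma}$ into the Dini criterion of Proposition~\ref{Proposition: Dini decrease implies uniqueness of tangent map}, which requires $\int_0^{\rho_0}\sqrt{\phi(\rho)}/\rho\,d\rho<\infty$ and hence implicitly needs $\gamma<1/2$. Your argument instead feeds the differential inequality back into the Cauchy estimate, replacing $1/r$ by $W'(r)/(c\,W(r)^{1+\gamma})$ and integrating $W'W^{-(1+\gamma)/2}$ as an exact derivative; this works for every $\gamma\in[0,1)$ and is therefore slightly more robust than the paper's route. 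Both deliver the same logarithmic rate for the error $\epsilon(r,\omega)$ once the $L^2$ bound is upgraded to $C^0$ via interpolation against the uniform $C^{2,\alpha}$ estimates.
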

\begin{remark}
	Theorem \ref{Theorem: uniqueness of tangent maps with isolated singularities} is sharp in the following senses. First, the logarithmic decay is the best possible in light of examples constructed in \cite{AdamsSimon} and \cite{GulliverWhite}. On the other hand, the analyticity hypothesis on the target $N$ cannot be replaced by smoothness in view of \cite{WhiteNonunique}. 
\end{remark}
Being a direct consequence of Theorem \ref{Theorem: (log)-epiperimetric inequality for harmonic maps}, our proof of Theorem \ref{Theorem: uniqueness of tangent maps with isolated singularities} is again purely variational, contrarily to Simon's one. Indeed, the arguments in \cite{AsymptoticSimon} are based on a careful analysis of the convergence properties of solutions to certain parabolic PDEs, and on growth estimates for the corresponding solutions. This also suggests that the argument used here would carry over for \textit{almost} minimizers, although this notion does not seem to have ever appeared in the literature. Moreover, the lower bound entailed in \eqref{equation: main epiperimetric inequality} immediately gives the following uniqueness of tangents at infinity for energy minimizing harmonic maps with analytic target in any dimension---see Section \ref{sec: uniqueness of tangents at infinity} for a proof. 
\begin{theorem}\label{thm: uniqueness of blow-downs}
    Let $N\subset\R^k$ be a closed real-analytic submanifold in $\R^k$. Let $u \in W_{\mathrm{loc}}^{1, 2}(\R^n, N)$ be an energy minimizing harmonic map such that
    \begin{align*}
        \frac{1}{\rho^{n-2}}\int_{B_{\rho}(0)}\lvert du\rvert^2\, d\mathcal{L}^n\le \Lambda \qquad\forall\,\rho\in(0,+\infty).
    \end{align*}
    for some $\Lambda>0$. Assume that $u$ has a tangent map $\varphi$ at infinity such that $\operatorname{Sing}(\varphi)\subset\{0\}$. Then, $\varphi$ is the unique tangent map of $u$ at infinity and there exists $\alpha>0$ such that
    \begin{align*}
        \|u(\rho\,\cdot\,)-\varphi\|_{L^2(\mathbb{S}^{n-1})}\le C\log(\rho)^{\alpha} \qquad\forall\,\rho\in(1,+\infty).
    \end{align*}
\end{theorem}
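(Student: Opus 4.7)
The strategy is to derive a differential inequality for the density excess
$$\mathcal{E}(\rho) := \theta(\rho) - \theta_{\infty}, \qquad \theta(\rho) := \rho^{2-n}\int_{B_\rho}|du|^2,$$
by applying the symmetric (log)-epiperimetric inequality of Theorem \ref{Theorem: (log)-epiperimetric inequality for harmonic maps} to the rescalings $u_\rho(x) := u(\rho x)$, and then integrate the resulting decay to show $u_\rho|_{\mathbb{S}^{n-1}}$ is Cauchy. First, the monotonicity formula for harmonic maps and the energy growth hypothesis force $\theta$ to be non-decreasing and bounded, so $\theta_{\infty} := \lim_{\rho\to\infty}\theta(\rho)$ exists and $\mathcal{E}\le 0$ with $\mathcal{E}'\ge 0$. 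Since $\int_{B_1}|du_\rho|^2=\theta(\rho)\le \Lambda$, the $u_\rho$ subconverge to a $0$-homogeneous energy minimizing tangent map at infinity; by assumption one such $\varphi$ satisfies $\Sing(\varphi)=\{0\}$, so $u_0:=\varphi|_{\mathbb{S}^{n-1}}$ is a smooth harmonic map into $N$, and Schoen--Uhlenbeck $\varepsilon$-regularity yields $u_{\rho_j}|_{\mathbb{S}^{n-1}}\to u_0$ in $C^{2,\alpha}$ along some $\rho_j\to\infty$.

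\textbf{The key ODE.} Let $\tilde u_{\rho,0}$ be the $0$-homogeneous extension of $u_\rho|_{\mathbb{S}^{n-1}}$ inside $\mathbb{B}^n$ and set $g(\rho):=\mathscr{E}_{\mathbb{B}^n}(\tilde u_{\rho,0};\tilde\varphi)$. Pohozaev's identity applied to the harmonic maps $u_\rho$ and $\varphi$, combined with the scaling $\int_{B_1}|du_\rho|^2=\theta(\rho)$, gives
$$g(\rho)-\mathcal{E}(\rho)=\frac{\rho}{2(n-2)}\mathcal{E}'(\rho)=\frac{1}{n-2}\int_{\mathbb{S}^{n-1}}|\partial_r u_\rho|^2\, d\mathcal{H}^{n-1}\ge 0.$$
Whenever $\|u_\rho|_{\mathbb{S}^{n-1}}-u_0\|_{C^{2,\alpha}}<\delta$, Theorem \ref{Theorem: (log)-epiperimetric inequality for harmonic maps} supplies a competitor $\hat u$ on $\mathbb{B}^n$ with $u_\rho|_{\mathbb{S}^{n-1}}$ as trace and $\mathscr{E}_{\mathbb{B}^n}(\hat u;\tilde\varphi)\le g(\rho)-\varepsilon|g(\rho)|^{1+\gamma}$; since $u_\rho$ is energy minimizing, this produces the lower bound
$$g(\rho)-\mathcal{E}(\rho)\ge\varepsilon |g(\rho)|^{1+\gamma}.$$
Setting $F:=-\mathcal{E}\ge 0$ and performing a short case analysis on the sign of $g$ (if $g\ge 0$ the identity above already gives $-\rho F'\ge 2(n-2)F$; if $g<0$ we have $F-|g|\ge\varepsilon|g|^{1+\gamma}$, and either $|g|\le F/2$ so that $-\rho F'\ge (n-2)F$, or $|g|>F/2$ so that $-\rho F'\ge c\, F^{1+\gamma}$) yields the unified differential inequality
$$-F'(\rho)\ge \frac{c}{\rho}\,F(\rho)^{1+\gamma}.$$
Integration gives $F(\rho)\le C(\log\rho)^{-1/\gamma}$ for $\gamma>0$, and $F(\rho)\le C\rho^{-c}$ if $\gamma=0$.

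\textbf{Uniqueness and rate.} For any $1\le R<R'$, writing $u_{R'}(\omega)-u_R(\omega)=\int_R^{R'}\partial_r u(\rho\omega)\,d\rho$ and using the identity $\int_{\mathbb{S}^{n-1}}|\partial_r u(\rho\cdot)|^2\,d\mathcal{H}^{n-1}=\mathcal{E}'(\rho)/(2\rho)=-F'(\rho)/(2\rho)$, one obtains
$$\|u_{R'}|_{\mathbb{S}^{n-1}}-u_R|_{\mathbb{S}^{n-1}}\|_{L^2(\mathbb{S}^{n-1})}\le \int_{R}^{R'}\sqrt{\frac{-F'(\rho)}{2\rho}}\,d\rho.$$
Substituting the decay of $F$ and using the change of variable $s=\log\rho$ shows that this integral is finite and bounded by $C(\log R)^{-(1-\gamma)/(2\gamma)}\to 0$ as $R\to\infty$ (since $\gamma<1$), or by $CR^{-c/2}$ when $\gamma=0$. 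Hence $\{u_\rho|_{\mathbb{S}^{n-1}}\}$ is Cauchy in $L^2(\mathbb{S}^{n-1})$ with the stated logarithmic rate, forcing $\varphi$ to be the unique tangent map at infinity.

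\textbf{Main obstacle.} The most delicate point is an open-closed argument guaranteeing that the smallness hypothesis $\|u_\rho|_{\mathbb{S}^{n-1}}-u_0\|_{C^{2,\alpha}}<\delta$ required by Theorem \ref{Theorem: (log)-epiperimetric inequality for harmonic maps} persists for \emph{all} $\rho$ large, not merely along the initial subsequence $\rho_j$. This is done by choosing $\rho_0$ along the subsequence where the closeness holds with a margin, propagating the closeness forward via the monotone decay of $F$ produced by the ODE above, and invoking a standard $\varepsilon$-regularity statement for energy minimizing harmonic maps on annuli to upgrade integrated smallness (small energy density excess) to $C^{2,\alpha}$-smallness of the trace on $\mathbb{S}^{n-1}$, closing the bootstrap.
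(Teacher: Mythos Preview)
Your proposal is correct and follows essentially the same approach as the paper: both combine the monotonicity identity $\mathcal{E}'(\rho)=\frac{n-2}{\rho}(g(\rho)-\mathcal{E}(\rho))$ with the symmetric log-epiperimetric inequality and energy minimality to obtain an ODE for the (negative) density excess, then integrate to get the logarithmic rate and the Cauchy property of $u_\rho|_{\mathbb{S}^{n-1}}$. The only presentational difference is that the paper outsources the ODE integration and the resulting $L^2$-rate to \cite[Corollary~2.6]{SymmetricEdelenSpolaorVelichkov} (and refers back to Section~5 for the persistence-of-smallness argument), whereas you carry out the case analysis, the $\sqrt{-F'/\rho}$ estimate, and the open-closed bootstrap explicitly.
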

The last few decades have seen remarkable progress on the understanding of the singular set of harmonic maps, especially when it comes to its optimal bound.  
In supercritical dimension $n\ge 3$, the sharp bound was established for energy minimizing harmonic maps by Schoen and Uhlenbeck \cite{schoen-uhlenbeck}, while for stationary harmonic maps, the best available bound on the size of their singular set is given by the subsequent contributions of \cite{evans}, \cite{bethuel} and \cite{riviere-struwe}. If we drop the stationarity assumption, it is known by work of Rivi\`ere in \cite{riviere-everywhere-discontinuous} that harmonic maps on $n$-manifolds with $n\ge 3$ can be even everywhere discontinuous and, therefore, no regularity theory is possible in this setting.

On the other hand, understanding under which conditions the tangents to harmonic maps at their singular points are unique is a widely open problem and, so far, very little is known in full generality beyond non-isolated singularities. In the framework of stationary harmonic maps, first Rivi\`ere and Tian showed in \cite{riviere-tian} that strongly approximable pseudoholomorphic maps from a $4$-dimensional almost complex manifold into complex algebraic varieties have a unique tangent map at every point and established the optimal bound on the size of their singular set. Recently, the uniqueness of tangent maps was generalized by the first-named author and Rivi\`ere to arbitrary dimension of the domain in \cite{caniato-riviere}. Both these results suggest that uniqueness of tangents is quite a strong property and, therefore, may hold true just under some special geometric assumptions on the structure both of the map and of the target manifold. 

More broadly, this uniqueness problem pertains to other geometric PDEs, and it is a central question in geometric analysis. For instance, in the celebrated work \cite{AsymptoticSimon} that we mentioned above, Simon also proved a result similar to the one in Theorem \ref{Theorem: uniqueness of tangent maps with isolated singularities} for stationary varifolds. We refer the reader to the surveys \cite{de2022regularity, wickramasekera2014regularity}, and the references therein, for further details on the uniqueness of tangent cones problem in the setting of minimal submanifolds. On the other hand, in the framework of mean curvature flow, Colding and Minicozzi in \cite{ColdingMinicozzi} exploited an infinite dimensional \L ojasiewicz inequality to prove a uniqueness result at generic cylindrical singularities. For Einstein manifolds, the same authors proved in \cite{ColdingMinicozziEinstein} that the tangent cone at infinity of a Ricci-flat manifold with Euclidean volume growth is unique, provided one tangent cone at infinity has a smooth cross-section (compare this result with Theorem \ref{thm: uniqueness of blow-downs}). 

\subsection*{Ideas of the proof and structure of the article} 

The proof of Theorem \ref{Theorem: (log)-epiperimetric inequality for harmonic maps} follows the strategy outlined in \cite{EngelsteinSpolaorVelichkov, ESVduke, CaniatoParise} and it relies on constructing a competitor function with energy smaller than the one of the 0-homogeneous extension of $u_0$. We start by rewriting the energy energy discrepancy $\mathscr{E}_{\mathbb{B}^n}(\cdot\,;\cdot)$ in Theorem \ref{Theorem: (log)-epiperimetric inequality for harmonic maps} in a more convenient form. This is our slicing lemma, cf. Lemma \ref{Lemma: slicing lemma} and it is done in Section \ref{sec: slicing and LS reduction}. There we also recall the Lyapunov--Schmidt reduction adapted to the setting of harmonic maps, cf. Lemma \ref{Lyapunov-Schmidt reduction}. In particular, the slicing lemma suggests that we can construct the competitor by flowing inwards the slices of the trace $u_0$ to decrease the energy. The question then becomes how to choose the appropriate directions of the flow. To answer it we turn to the second variation, which can be written as a linear elliptic operator with compact resolvent. This last property is crucial as it implies that the linearized operator has a finite dimensional kernel, thus allowing us to decompose the datum $u_0$ as the sum of the projections on the kernel, the positive, and the negative eigenvalues, i.e. the index. 
The map $u_0$ being harmonic on the sphere $\mathbb{S}^{n - 1}$, positive directions will increase the energy to second order, while negative directions will decrease it. Thus, to construct the competitor, we wish to move move $u_0$ towards zero in the former, while increasing the contribution of the latter. To deal with the kernel we resort to a \textit{finite dimensional harmonic map heat flow} combined with a finite dimensional \L ojasiewicz inequality, cf. Lemma \ref{lemma: finite dimensional Lojasiewicz inequality} to make the estimate more quantitative. Note that this finite dimensional contribution is ultimately responsible for the logarithmic decay, instead of a polynomial one. See Section \ref{sec: proof of epiperimetric ineq} for a proof of Theorem \ref{Theorem: (log)-epiperimetric inequality for harmonic maps}. In the special situation in which the projection of $u_0$ on the kernel of the second variation vanishes, the so-called the integrable case, the proof simplifies significantly, see Subsection \ref{subsec: integrable case}. The adaptation of this proof to yet another geometric setting is a testament to its remarkable flexibility. Moreover, it establishes a precise connection between the kernel of the second variation and the logarithmic decay.

As an application of Theorem \ref{Theorem: (log)-epiperimetric inequality for harmonic maps}, we prove Theorem \ref{Theorem: uniqueness of tangent maps with isolated singularities}. The reader can find this in Section \ref{sec: proof of uniqueness}. We start by showing that the $L^2$-difference of a blow-up at comparable scales is small. The key idea is then to exploit the log-epiperimetric inequality to infer a bound for the energy density $\Theta(\rho, y; u)$ at all dyadic scales, which can then be converted to a bound at all scales. Uniqueness of the tangent map then follows by a standard Dini-type estimate that we include in Appendix \ref{sec: appendix criterion}. We conclude in Appendix \ref{appendix: EP ineq and LS ineq} by giving the alternative proof of Theorem \ref{Theorem: (log)-epiperimetric inequality for harmonic maps} already mentioned in Remark \ref{remark: two proofs}. More precisely, we construct the competitor  as in \cite[Proposition 3.1]{colombo-spolaor-velichkov-3} by flowing the full trace inwards via \textit{infinite dimensional harmonic map heat flow} instead of dealing with the different projected components. As before, the slicing lemma is the crucial first of this analysis. This greatly simplifies the proof, while having the drawback of relying on the infinite dimensional \L ojasiewicz-Simon inequality for harmonic maps established in \cite{AsymptoticSimon}. 

\subsection*{Acknowledgements}
The authors would like to thank Luca Spolaor for useful discussions. Part of this work was partially supported by the National Science Foundation Grant No. DMS-1928930, while the authors were in
residence at the Simons Laufer Mathematical Sciences Institute
(formerly MSRI) in Berkeley, California, during the Fall 2024
semester. D.P. acknowledges the support of the AMS-Simons travel grant.

\section{Preliminaries on harmonic maps} \label{sec: preliminaries}
The aim of this section is to collect basic definitions and properties on harmonic maps used later in the article. 
\begin{definition}[Dirichlet energy and harmonic maps]
    Let $N\subset\R^k$ be a closed, i.e. compact and boundaryless, smooth submanifold in $\R^k$. Let $(M^n,g)$ be a smooth $n$-dimensional Riemannian manifold and let $\Omega\subset M$ be an open subset of $M$. 
    \newline 
    An $N$-valued \textit{harmonic map} on $\Omega$ is a critical point $u\in W^{1,2}(\Omega,N)$\footnote{Here and throughout, we denote by $W^{1,2}(\Omega,N)$ the space of the functions $u\in W^{1,2}(\Omega,\R^k)$ such that $u(x)\in N$ for $\vol_g$-a.e. $x\in\Omega$.} of the functional $\mathscr{D}(\,\cdot\,,\Omega):W^{1,2}(\Omega,N)\to[0,+\infty)$ given by
    \begin{align*}
        \mathscr{D}_M(v\,;\Omega):=\int_{\Omega}\lvert dv\rvert_g^2\, d\vol_g\qquad\forall\, v\in W^{1,2}(\Omega,N).
    \end{align*}
    For short we denote $\mathscr{D}_M(\,\cdot\,\,;M)$ simply by $\mathscr{D}_M(\,\cdot\,)$. We call $\mathscr{D}_M(\,\cdot\,\,;\Omega)$ the \textit{Dirichlet energy} functional on $\Omega$. 
\end{definition}
Furthermore, for a given harmonic map $u$, we define the \textit{singular set} of $u$, denoted $\Sing(u)$, to be the complement of the set of points at which $u$ is smooth, i.e. the \textit{regular set}. 
\begin{definition} \label{def: discrepancy energy}
    Let $N\subset\R^k$ be a closed smooth submanifold in $\R^k$. Let $(M^n,g)$ be a smooth $n$-dimensional Riemannian manifold and let $\Omega\subset M$ be an open subset of $M$. Let $u_0\in W^{1,2}(\Omega,N)$. 
    \newline 
    The functional $\mathscr{E}_{M}(\,\cdot\,\,;u_0,\Omega):W^{1,2}(\Omega,N)\to\R$ given by
    \begin{align*}
        \mathscr{E}_M(u\,;u_0,\Omega):=\mathscr{D}_M(u\,;\Omega)-\mathscr{D}_M(u_0\,;\Omega) \qquad \forall u \in \,W^{1,2}(\Omega,N)
    \end{align*}
    is called \textit{Dirichlet energy discrepancy with respect to $u_0$} on $\Omega$.
\end{definition}
\begin{lemma}\label{Lemma: the analyticity of energy discrepancy in a neighbourhood of the origin}
    Let $N\subset\R^k$ be a closed real-analytic submanifold in $\R^k$ and let $\pi_N:W_{\delta_0}(N)\to N$ be the nearest-point projection from a tubular neighbourhood $W_{\delta_0}(N)$ of $N$ into $N$. Let $(M^n,g)$ be a smooth $n$-dimensional Riemannian manifold and let $\Omega\subset M$ be an open subset of $M$. Let $u_0\in C^{\infty}(\Omega,N)$ be a smooth map on $\Omega$. Then, the following facts hold.
    \begin{enumerate}[(i)]
        \item The functional $\mathscr{F}_{M}(\,\cdot\,\,;u_0,\Omega):\mathcal{B}_{\delta_0}(0)\subset C^{2,\alpha}(\Omega,u_0^*TN)$\footnote{Here and throughout, for every $k=0,...,+\infty$ we denote by by $C^k(\Omega,u_0^*TN)$ the space of the $C^k$ sections on $\Omega\subset M$ of the smooth vector bundle $u_0^*TN$ over $M$.}$\to\mathbb{R}$ given by 
        \begin{align*}
            \mathscr{F}_{M}(v\,;u_0,\Omega):=\mathscr{E}_{M}(\pi_N(u_0+v)\,;u_0,\Omega)\qquad\forall\, v\in \mathcal{B}_{\delta_0}(0)\subset C^{2,\alpha}(\Omega,u_0^*TN)
        \end{align*}
        is real-analytic on $\mathcal{B}_{\delta_0}(0)\subset C^{2,\alpha}(\Omega,u_0^*TN)$ in the sense of \cite[Section 3.13]{simon-book}.
        \item For every $u\in B_{\delta_0}(u_0)\subset C^2(\Omega,N)$
        there exists $\tilde u\in \mathcal{B}_{\delta_0}(0)\subset C^{2,\alpha}(\Omega,u_0^*TN)$ so that
        \begin{align*}
            \mathscr{E}_{M}(u\,;u_0,\Omega)=\mathscr{E}_{M}(\pi_N(u_0+\tilde u)\,;u_0,\Omega)=\mathscr{F}_{M}(\tilde u\,;u_0,\Omega).
        \end{align*}
    \end{enumerate}
    \begin{proof}
        We first prove (i). Fix any $v\in C^{2,\alpha}(\Omega,u_0^*TN)$ and notice that
        \begin{align*}
            d(\pi_N(u_0+v))&=d\pi_N(u_0+v)[du_0+dv].
        \end{align*}
        Hence, letting 
        \begin{align*}
            \mathcal{D}:=\{(x,z,\eta) \mbox{ : } x\in\Omega,\,z\in B_{\delta_0}^k(0),\,\eta\in T_x\Omega\otimes\R^k\}
        \end{align*}
        we define $F:\mathcal{D}\to\R$ as
        \begin{align*}
            F(x,z,\eta):=\big\lvert d\pi_N(u_0(x)+z)[du_0(x)+\eta]\big\rvert_g^2-\lvert du_0(x)\rvert_g^2 \qquad\forall\,(x,z,\eta)\in\mathcal{D}
        \end{align*}
        and we notice that
        \begin{align*}
            \mathscr{F}_M(v\,;u_0,\Omega)=\int_{\Omega}F(x,v(x),dv(x))\, d\vol_g(x) \qquad\forall\ v\in \mathcal{B}_{\delta_0}(0)\subset C^{2,\alpha}(\Omega,u_0^*TN).
        \end{align*}
        Notice that $F$ is smooth in the variable $x$, because both $u_0$ and $\pi_N$ are smooth functions. Moreover, since $\pi_N$ is real-analytic, $F$ and all its derivatives with respect to the variable $x$ are real-analytic with respect to the variables $z,\eta\in\mathcal{D}$. Hence, we conclude that $\mathscr{F}_M(\,\cdot\,\,;u_0,\Omega)$ is real-analytic on $\mathcal{B}_{\delta_0}(0)\subset C^{2,\alpha}(\Omega,u_0^*TN)$ in the sense of \cite[Section 3.13]{simon-book} and (i) follows. 
        \newline
        Now we turn to show (ii). Fix any $y_0\in N$ and define the sets 
        \begin{align*}
            U_{\delta_0}(y_0)&:=\{\xi\in T_{y_0}N \mbox{ : } \lvert\xi\rvert<\delta_0\}\\
            V_{\delta_0}(y_0)&:=\{\pi_N(y_0+\xi) \mbox{ : } \xi\in U_{\delta_0}(y_0)\}.
        \end{align*}
        Notice that, since $\pi_N$ is smooth, the map $\Phi_{y_0}:U_{\delta_0}(y_0)\to V_{\delta_0}(y_0)$ is a smooth diffeomorphism. Moreover, again by the smoothness of $\pi_N$, letting 
        \begin{align*}
            D:=\{(y_0,y)\in N\times N \mbox{ : } y\in V_{\delta_0}(y_0)\},
        \end{align*}
        the map $\Psi:D\to\R^k$ given by
        \begin{align*}
            \Psi(y_0,y):=\Phi_{y_0}^{-1}(y)\in U_{\delta_0}(y_0)\subset T_{y_0}N\subset\R^k
        \end{align*}
        is smooth on $D\subset N\times N$. Assume now that $u\in B_{\delta_0}(u_0)\subset C^2(\Omega,N)$. Then, by definition of $\Psi$, we have 
        \begin{align*}
            u=\pi_N(u_0+\Psi(u_0,u))
        \end{align*}
        and thus, setting $\tilde u:=\Psi(u_0,u)\in C^{2,\alpha}(\Omega,u_0^*TN)$, we get
        \begin{align*}
            u=\pi_N(u_0+\tilde u).
        \end{align*}
        Then, 
        \begin{align*}
            \mathscr{D}_M(u\,;\Omega)=\mathscr{D}_M(\pi_N(u_0+\tilde u)\,;\Omega)
        \end{align*}
        and (ii) follows immediately.
    \end{proof}
\end{lemma}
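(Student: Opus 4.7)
The plan is to treat the two parts of the lemma separately. Part (i) should follow from the real-analyticity of the nearest-point projection $\pi_N$ (which is a consequence of $N$ being a real-analytic submanifold) together with a standard criterion for real-analyticity of integral functionals on Hölder spaces. Part (ii) should reduce to a pointwise application of the tubular neighborhood theorem, producing the section $\tilde u$ via a fiberwise inversion.

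For part (i), the first step is to rewrite $\mathscr{F}_M(v;u_0,\Omega)$ as a single integral of a Lagrangian depending pointwise on $x$, $v(x)$, and $dv(x)$. The chain rule gives
\begin{equation*}
d(\pi_N(u_0+v))=d\pi_N(u_0+v)[du_0+dv],
\end{equation*}
which suggests defining, on the natural domain of triples $(x,z,\eta)$ with $x\in\Omega$, $|z|<\delta_0$, $\eta\in T_x\Omega\otimes\R^k$,
\begin{equation*}
F(x,z,\eta):=\bigl|d\pi_N(u_0(x)+z)[du_0(x)+\eta]\bigr|_g^{\,2}-|du_0(x)|_g^{\,2},
\end{equation*}
so that $\mathscr{F}_M(v;u_0,\Omega)=\int_\Omega F(x,v(x),dv(x))\,d\vol_g$. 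Because $\pi_N$ is real-analytic on the tubular neighborhood $W_{\delta_0}(N)$, the function $F$ is real-analytic in the variables $(z,\eta)$ with coefficients smooth in $x$ (since $u_0$ and the metric $g$ are smooth). I would then invoke the abstract superposition-operator criterion from \cite[Section 3.13]{simon-book}, which converts pointwise real-analyticity of the integrand into real-analyticity of the induced functional on a small $C^{2,\alpha}$-ball of sections of $u_0^*TN$.

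For part (ii), the plan is a pointwise inversion of the map $\xi\mapsto \pi_N(y_0+\xi)$. Fix $y_0\in N$; since $d\pi_N$ restricted to $T_{y_0}N$ at $y_0$ is the identity, the map is a smooth diffeomorphism $\Phi_{y_0}$ from a ball $U_{\delta_0}(y_0)\subset T_{y_0}N$ onto a neighborhood $V_{\delta_0}(y_0)\subset N$ of $y_0$, uniformly in $y_0$ for $\delta_0$ small. Varying $y_0$ and applying the implicit function theorem jointly produces a smooth map $\Psi$, defined on the open subset $D\subset N\times N$ of pairs $(y_0,y)$ with $y\in V_{\delta_0}(y_0)$, such that $\pi_N(y_0+\Psi(y_0,y))=y$. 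Given $u\in B_{\delta_0}(u_0)\subset C^2(\Omega,N)$, I would then set $\tilde u(x):=\Psi(u_0(x),u(x))$. This is automatically a $C^{2,\alpha}$ (in fact $C^2$) section of $u_0^*TN$, of norm bounded by a multiple of $\|u-u_0\|_{C^2}$, and it satisfies $u=\pi_N(u_0+\tilde u)$ by construction; substituting in the definition of $\mathscr{F}_M$ gives (ii).

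The main obstacle I expect lies in part (i): carefully verifying the abstract criterion requires writing $F$ as a convergent power series in $(z,\eta)$ and then exhibiting the resulting series $\sum_k \Lambda_k(v,\dots,v)$ as a sum of continuous symmetric multilinear forms $\Lambda_k$ on $C^{2,\alpha}(\Omega,u_0^*TN)$ whose norms decay geometrically. This in turn reduces to uniformly bounding the radius of convergence of the Taylor expansion of $\pi_N$ on a compact neighborhood of $N$ and controlling the factors $du_0+\eta$ in $L^\infty$, so the argument is routine provided the bookkeeping is set up correctly. Part (ii), by contrast, is essentially a direct consequence of the tubular neighborhood construction.
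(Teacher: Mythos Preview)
Your proposal is correct and follows essentially the same approach as the paper: for (i) you rewrite $\mathscr{F}_M$ via the same Lagrangian $F(x,z,\eta)=|d\pi_N(u_0(x)+z)[du_0(x)+\eta]|_g^2-|du_0(x)|_g^2$ and invoke Simon's criterion, and for (ii) you carry out the same fiberwise inversion $\Phi_{y_0}^{-1}$ to produce $\tilde u=\Psi(u_0,u)$. The only difference is that you spell out a bit more of the justification (the identity $d\pi_N|_{T_{y_0}N}=\mathrm{id}$, the implicit function theorem for joint smoothness of $\Psi$, and the bookkeeping behind Simon's criterion), none of which the paper makes explicit.
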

\begin{lemma}[First and second variation of $\mathscr{F}_{M}(\,\cdot\,,\Omega)$]\label{Lemma: the first and second variation of the energy discrepancy at the origin}
    Let $N\subset\R^k$ be a closed real-analytic submanifold in $\R^k$ and let $\pi_N:W_{\delta_0}(N)\to N$ be the nearest-point projection from a tubular neighbourhood $W_{\delta_0}(N)$ of $N$ into $N$. Let $(M^n,g)$ be a smooth $n$-dimensional Riemannian manifold and let $\Omega\subset M$ be an open subset of $M$. Let $u_0\in C^{\infty}(\Omega,N)$ be a smooth harmonic map on $\Omega$. Then, the zero section $0$ of the vector bundle $u_0^*TN$ is a critical point for $\mathscr{F}_{M}(\,\cdot\,\,;u_0,\Omega)$, i.e. 
    \begin{align*}
        \nabla\mathscr{F}_{M}(0\,;u_0,\Omega)[\varphi]=\nabla\mathscr{D}_{M}(u_0\,;\Omega)[\varphi]=0 \qquad\forall\,\varphi\in C^{2}(u_0^*TN).
    \end{align*}
    Moreover, the second variation of $\mathscr{F}_{M}(\,\cdot\,\,;u_0,\Omega)$ at the critical point $0$ is given by
    \begin{align*}
        \nabla^2\mathscr{F}_{M}(0\,;u_0,\Omega)[\varphi,\psi]&=\nabla^2\mathscr{D}_{M}(u_0\,;\Omega)[\varphi,\psi]\\
        &=\int_{\Omega}(\langle d\varphi,d\psi\rangle_g-\langle A_{u_0}(du_0,du_0)_g,A_{u_0}(\varphi,\psi)\rangle_g\, d\vol_{M},
    \end{align*}
    for every $\varphi,\psi\in C^{2}(u_0^*TN)$.
    \begin{proof}
        By definition, $u_0$ is a critical point of $\mathscr{D}_{M}(\,\cdot\,\,;\Omega)$, hence
        \begin{align*}
            \nabla\mathscr{D}_{M}(u_0\,;\Omega)[\varphi]=0 \qquad\forall\,\varphi\in C^{2}(u_0^*TN).
        \end{align*}
        Moreover, by Lemma \ref{Lemma: the analyticity of energy discrepancy in a neighbourhood of the origin} we know that $\mathscr{F}_{M}(\,\cdot\,\,;u_0,\Omega)$ is analytic in a neighbourhood of $0$ and, in particular, twice continuously Fr\'echet differentiable at $0$. Hence, the first variation is by definition given by
        \begin{align*}
            \nabla\mathscr{F}_M(0\,;u_0,\Omega)[\varphi]&=\frac{d}{dt}\mathscr{F}_M(t\varphi\,;u_0,\Omega)\bigg\rvert_{t=0}\\
            &=\frac{d}{dt}\mathscr{D}_M(\pi_N(u_0+t\varphi)\,;\Omega)\bigg\rvert_{t=0}=\nabla\mathscr{D}_M(u_0\,;\Omega)[\varphi]=0
        \end{align*}
        for every $\varphi\in C^{2,\alpha}(\Omega,u_0^*TN)$. This implies that $0$ is a critical point for $\mathscr{F}_{M}(\,\cdot\,\,;u_0,\Omega)$. Concerning the second variation of $\mathscr{F}_{M}(\,\cdot\,\,;u_0,\Omega)$ at $0$, again we have 
        \begin{align*}
            \nabla^2\mathscr{F}_M(0\,;u_0,\Omega)[\varphi,\psi]&=\nabla^2\mathscr{D}_M(u_0\,;\Omega)[\varphi,\psi]\\
            &=\int_{\Omega}(\langle d\varphi,d\psi\rangle_g-\langle A_{u_0}(du_0,du_0)_g,A_{u_0}(\varphi,\psi)\rangle_g\, d\vol_{M}
        \end{align*}
        for every $\varphi,\psi\in C^{2,\alpha}(\Omega,u_0^*TN)$, where the last equality follows by the standard second variation formula for the Dirichlet energy that is standard in literature and can be found for example in \cite{smith}. The statement follows. 
    \end{proof}
\end{lemma}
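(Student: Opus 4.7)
The plan is to exploit the real-analyticity of $\mathscr{F}_M(\,\cdot\,;u_0,\Omega)$ near $0$ established in Lemma \ref{Lemma: the analyticity of energy discrepancy in a neighbourhood of the origin} in order to compute its Fréchet derivatives at $0$ by differentiating the one-parameter family $t \mapsto \mathscr{F}_M(t\varphi;u_0,\Omega) = \mathscr{D}_M(\pi_N(u_0 + t\varphi);\Omega) - \mathscr{D}_M(u_0;\Omega)$ at $t = 0$, and to identify these with the corresponding intrinsic variations of the Dirichlet energy on the space of $N$-valued maps.

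For the first variation, I would use that $\pi_N$ is the identity on $N$, so that $d\pi_N(y)[\xi] = \xi$ for every $y \in N$ and every $\xi \in T_y N$. Applied pointwise with $y = u_0(x)$ and $\xi = \varphi(x)$, this gives $\frac{d}{dt}\vert_{t=0} \pi_N(u_0 + t\varphi) = \varphi$, so the chain rule identifies $\nabla \mathscr{F}_M(0;u_0,\Omega)[\varphi]$ with $\nabla \mathscr{D}_M(u_0;\Omega)[\varphi]$, which vanishes by the harmonicity of $u_0$.

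For the second variation, the conceptual point is that $v \mapsto \pi_N(u_0 + v)$ parameterizes a neighbourhood of $u_0$ in $C^{2,\alpha}(\Omega, N)$ by a ball in the vector space $C^{2,\alpha}(u_0^*TN)$, and its differential at $0$ is the identity on $C^{2,\alpha}(u_0^*TN)$; it therefore serves as a local chart around $u_0$ on the Banach manifold of $N$-valued maps. Since $u_0$ is a critical point of $\mathscr{D}_M$, the Fréchet Hessian of $\mathscr{F}_M$ at $0$ read in this chart agrees with the (chart-independent) intrinsic Hessian of $\mathscr{D}_M$ at $u_0$, yielding $\nabla^2 \mathscr{F}_M(0;u_0,\Omega) = \nabla^2 \mathscr{D}_M(u_0;\Omega)$ as bilinear forms on $C^{2,\alpha}(u_0^*TN)$.

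The main---and only genuinely computational---step is then to verify the classical identity
\begin{align*}
\nabla^2 \mathscr{D}_M(u_0;\Omega)[\varphi,\psi] = \int_\Omega \left( \langle d\varphi, d\psi\rangle_g - \langle A_{u_0}(du_0, du_0), A_{u_0}(\varphi, \psi)\rangle_g \right) d\vol_g,
\end{align*}
which is the standard second variation formula for the Dirichlet energy of maps into a Riemannian submanifold of $\mathbb{R}^k$. This can be quoted from \cite{smith} or verified directly by decomposing the ambient derivative $d\varphi = \nabla^{u_0^*TN}\varphi + A_{u_0}(du_0,\varphi)$ into its tangential and normal parts and invoking the Gauss equation to re-express the intrinsic curvature contribution $\langle R^N(\varphi, du_0)du_0, \psi\rangle$ in terms of the second fundamental form $A_{u_0}$; the resulting $|A_{u_0}(du_0, \varphi)|^2$-type contributions then cancel and leave exactly the stated formula. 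Careful bookkeeping of this tangent/normal decomposition is the only technical burden of the argument.
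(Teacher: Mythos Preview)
Your proposal is correct and follows essentially the same approach as the paper: both use the analyticity of $\mathscr{F}_M$ from Lemma~\ref{Lemma: the analyticity of energy discrepancy in a neighbourhood of the origin} to compute the variations via $t\mapsto\mathscr{D}_M(\pi_N(u_0+t\varphi);\Omega)$, identify them with the variations of $\mathscr{D}_M$ at $u_0$, and then invoke the classical second variation formula (quoting \cite{smith}). You supply slightly more justification than the paper does---the explicit observation that $d\pi_N(u_0)[\varphi]=\varphi$ for tangential $\varphi$, and the chart-independence of the Hessian at a critical point---but the argument is the same.
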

%
\begin{remark}\label{Remark: ellipticity of the second variation of the Dirichlet energy discrepancy}
    Under the same assumptions of Lemma \ref{Lemma: the analyticity of energy discrepancy in a neighbourhood of the origin} we point out that, by simple integration by parts, if $M$ is a closed smooth manifold we have that
    \begin{align}\label{Equation: second variation rewritten 1}
        \nabla^2\mathscr{F}_M(0\,;u_0)[\varphi,\psi]&=\int_{M}(\langle d\varphi,d\psi\rangle_g-\langle A_{u_0}(du_0,du_0)_g,A_{u_0}(\varphi,\psi)\rangle_g\, d\vol_{M}\\
        &=-\int_{M}(\langle \Delta_g\varphi,\psi\rangle_g+\langle A_{u_0}(du_0,du_0)_g,A_{u_0}(\varphi,\psi)\rangle_g\, d\vol_{M}.
    \end{align}
    Notice that the map
    \begin{align*}
        u_0^*TN\times u_0^*TN\ni(X,Y)\mapsto\langle A_{u_0}(du_0, du_0)_g,A_{u_0}(X,Y)\rangle_g
    \end{align*}
    is a symmetric smooth section over $M$ of the vector bundle $u_0^*T^*N\otimes u_0^*T^*N$. In particular, there exists a smooth section over $M$ of the vector bundle $u_0^*T^*N\otimes u_0^*TN$ such that
    \begin{align*}
        \langle A_{u_0}(du_0, du_0)_g,A_{u_0}(X,Y)\rangle_g=\langle S_{u_0}(du_0)_gX,Y\rangle_g \qquad\forall\, (X,Y)\in u_0^*TN\times u_0^*TN.
    \end{align*}
    Thus, by plugging the former definition in \eqref{Equation: second variation rewritten 1} we have
    \begin{align}\label{Equation: second variation rewritten 2} 
        \nabla^2\mathscr{F}_M(0\,;u_0)[\varphi,\psi]&=-\int_{M}(\langle\Delta_g\varphi+S_{u_0}(du_0)_g\varphi,\psi\rangle_g\, d\vol_{M} \qquad\forall\,\varphi,\psi\in C^2(u_0^*TN).
    \end{align}
    By introducing the linear elliptic operator $\mathcal{L}_{\mathscr{F}}:C^2(u_0^*TN)\to C^0(u_0^*TN)$ given by
    \begin{align*}
        \mathcal{L}_{\mathscr{F}}\varphi:=-(\Delta_g\varphi+S_{u_0}(du_0)_g\varphi) \qquad\forall\,\varphi\in C^2(u_0^*TN)
    \end{align*}
    we can rewrite \eqref{Equation: second variation rewritten 2} as
    \begin{align}\label{Equation: second variation rewritten 3} 
        \nabla^2\mathscr{F}_M(0\,;u_0)[\varphi,\psi]&=\int_{M}\langle\mathcal{L}_{\mathscr{F}}\varphi,\psi\rangle_g\, d\vol_{M} \qquad\forall\,\varphi,\psi\in C^2(u_0^*TN).
    \end{align}
    Notice that $\mathcal{L}_{\mathscr{F}}$ has compact resolvent, because it is an elliptic operator on the compact manifold $M$. Moreover, again by standard elliptic theory (i.e. Schauder estimates), for every $\alpha\in (0,1)$ we have that $\mathcal{L}_{\mathscr{F}}:C^{2,\alpha}(u_0^*TN)\to C^{0,\alpha}(u_0^*TN)$ has closed range. 
    This facts will play a crucial role in the following sections. 
\end{remark}
\section{A slicing lemma and Lyapunov--Schmidt reduction} \label{sec: slicing and LS reduction}
In this section we write the energy discrepancy introduced in Definition \ref{def: discrepancy energy} in a more convenient form. Besides, we recall the classical Lyapunov-Schmidt reduction and adapt its statement to our setting. 
\begin{lemma}[Slicing lemma]\label{Lemma: slicing lemma}
    Let $N\subset\R^k$ be a closed smooth submanifold in $\R^k$ and let $n\in\N$ be such that $n\ge 3$. Let $u_0\in W^{1,2}(\mathbb{S}^{n-1},N)$ and denote by $\tilde u_0\in W^{1,2}(\mathbb{B}^n,N)$ the 0-homogeneous extension of $u_0$ inside $\mathbb{B}^n$, i.e.
    \begin{align*}
        \tilde u_0:=\bigg(\frac{\cdot}{\lvert\,\cdot\,\rvert}\bigg)^*u_0.
    \end{align*}
    Then, for every $u\in W^{1,2}(\mathbb{B}^n,N)$ we have
    \begin{align*}
        \mathscr{E}_{\mathbb{B}^n}(u\,;\tilde u_0)=\int_0^1\mathscr{E}_{\mathbb{S}^{n-1}}(u(\rho,\,\cdot\,)\,;u_0)\rho^{n-3}\, d\mathcal{L}^1(\rho)+\int_0^1\int_{\mathbb{S}^{n-1}}\lvert\partial_{\rho}u(\rho,\theta)\rvert^2\, d\mathscr{H}^{n-1}(\theta)\rho^{n-1}\, d\mathcal{L}^1(\rho),
    \end{align*}
    where $(\rho,\theta)$ are polar coordinates centered at the origin of $\R^n$. Moreover, in case $u$ does not depend on $\rho$, the above simplifies to 
    \begin{equation*}
        \mathscr{E}_{\mathbb{B}^n}(u\,;\tilde u_0)= \frac{1}{n - 2}\mathscr{E}_{\mathbb{S}^{n-1}}(u(1,\cdot)\,;u_0). 
    \end{equation*}
    \begin{proof}
        By definition of the Dirichlet energy discrepancy and passing in polar coordinates $(\rho,\theta)$ centered at the origin, we get
        \begin{align*}
            \mathscr{E}_{\mathbb{B}^n}(u\,;\tilde u_0)&=\mathscr{D}_{\mathbb{B}^n}(u)-\mathscr{D}_{\mathbb{B}^n}(\tilde u_0)=\int_{\mathbb{B}^n}\lvert du\rvert^2\, d\mathcal{L}^n-\int_{\mathbb{B}^n}\lvert d\tilde u_0\rvert^2\, d\mathcal{L}^n\\
            &=\int_0^1\int_{\mathbb{S}^{n-1}}\bigg(\lvert\partial_{\rho}u(\rho,\theta)\rvert^2+\frac{1}{\rho^2}\lvert\partial_{\theta}u(\rho,\theta)\rvert^2\bigg)\, d\mathscr{H}^{n-1}(\theta)\rho^{n-1}\,d\mathcal{L}^1(\rho)\\
            &\quad-\int_0^1\int_{\mathbb{S}^{n-1}}\bigg(\lvert\partial_{\rho}\tilde u_0(\rho,\theta)\rvert^2+\frac{1}{\rho^2}\lvert\partial_{\theta}\tilde u_0(\rho,\theta)\rvert^2\bigg)\, d\mathscr{H}^{n-1}(\theta)\rho^{n-1}\,d\mathcal{L}^1(\rho)\\
            &=\int_0^1\int_{\mathbb{S}^{n-1}}\bigg(\lvert\partial_{\rho}u(\rho,\theta)\rvert^2+\frac{1}{\rho^2}\lvert\partial_{\theta}u(\rho,\theta)\rvert^2\bigg)\, d\mathscr{H}^{n-1}(\theta)\rho^{n-1}\,d\mathcal{L}^1(\rho)\\
            &\quad-\int_0^1\int_{\mathbb{S}^{n-1}}\frac{1}{\rho^2}\lvert\partial_{\theta}u_0(\theta)\rvert^2\, d\mathscr{H}^{n-1}(\theta)\rho^{n-1}\,d\mathcal{L}^1(\rho) \\
            &=\int_0^1\int_{\mathbb{S}^{n-1}}\lvert\partial_{\rho}u(\rho,\theta)\rvert^2\, d\mathscr{H}^{n-1}(\theta)\rho^{n-1}\,d\mathcal{L}^1(\rho)\\
            &\quad+\int_0^1\bigg(\int_{\mathbb{S}^{n-1}}\lvert\partial_{\theta}u(\rho,\,\cdot\,)\rvert^2\, d\mathscr{H}^{n-1}-\int_{\mathbb{S}^{n-1}}\lvert\partial_{\theta}u_0\rvert^2\, d\mathscr{H}^{n-1}\bigg)\rho^{n-3}\,d\mathcal{L}^1(\rho)\\
            &=\int_0^1\int_{\mathbb{S}^{n-1}}\lvert\partial_{\rho}u(\rho,\theta)\rvert^2\, d\mathscr{H}^{n-1}(\theta)\rho^{n-1}\,d\mathcal{L}^1(\rho)\\
            &\quad+\int_0^1\mathscr{E}_{\mathbb{S}^{n-1}}(u(\rho,\,\cdot\,)\,;u_0)\rho^{n-3}\,d\mathcal{L}^1(\rho).
        \end{align*}
        The statement follows. 
    \end{proof}
\end{lemma}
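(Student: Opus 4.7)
The plan is to write both Dirichlet energies in polar coordinates $(\rho,\theta)\in(0,1)\times\mathbb{S}^{n-1}$ and exploit the fact that the $0$-homogeneous extension $\tilde u_0$ depends only on the angular variable, so that its radial derivative vanishes and its angular derivative coincides with $\partial_\theta u_0$.

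First I would record the two standard polar identities: the volume element factorizes as $d\mathcal{L}^n = \rho^{n-1}\,d\rho\,d\mathscr{H}^{n-1}(\theta)$, and the squared gradient splits as
\begin{equation*}
    |du(\rho,\theta)|^2 = |\partial_\rho u(\rho,\theta)|^2 + \rho^{-2}|\partial_\theta u(\rho,\theta)|^2,
\end{equation*}
where $\partial_\theta u(\rho,\cdot)$ denotes the tangential gradient of $u$ restricted to $\rho\mathbb{S}^{n-1}$ (viewed on $\mathbb{S}^{n-1}$ via the dilation by $\rho$). For $\tilde u_0$, $0$-homogeneity gives $\partial_\rho \tilde u_0 \equiv 0$ and $\partial_\theta \tilde u_0(\rho,\theta) = \partial_\theta u_0(\theta)$.

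Next I would write out $\mathscr{D}_{\mathbb{B}^n}(u)-\mathscr{D}_{\mathbb{B}^n}(\tilde u_0)$ in these coordinates. The radial contribution from $u$ gives exactly the second term in the claim, namely
\begin{equation*}
    \int_0^1\!\!\int_{\mathbb{S}^{n-1}}|\partial_\rho u(\rho,\theta)|^2\, d\mathscr{H}^{n-1}(\theta)\,\rho^{n-1}\,d\mathcal{L}^1(\rho),
\end{equation*}
while the radial contribution from $\tilde u_0$ vanishes. For the angular contribution, the factor $\rho^{-2}\cdot\rho^{n-1} = \rho^{n-3}$ can be pulled outside the $\theta$-integration, yielding
\begin{equation*}
    \int_0^1\biggl(\int_{\mathbb{S}^{n-1}}|\partial_\theta u(\rho,\theta)|^2\,d\mathscr{H}^{n-1}(\theta) - \int_{\mathbb{S}^{n-1}}|\partial_\theta u_0(\theta)|^2\,d\mathscr{H}^{n-1}(\theta)\biggr)\rho^{n-3}\,d\mathcal{L}^1(\rho),
\end{equation*}
and the bracket is by definition $\mathscr{E}_{\mathbb{S}^{n-1}}(u(\rho,\cdot);u_0)$. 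Summing the two contributions gives the formula.

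For the final assertion, if $u$ is $\rho$-independent then the radial term vanishes and $\mathscr{E}_{\mathbb{S}^{n-1}}(u(\rho,\cdot);u_0)$ is constant in $\rho$. Using $n\ge 3$ we compute $\int_0^1\rho^{n-3}\,d\mathcal{L}^1(\rho)=\tfrac{1}{n-2}$ and pull the constant out to conclude. The only point requiring some care is the pointwise meaning of $\partial_\theta u(\rho,\cdot)$ for a generic $W^{1,2}$ map, but since both sides are continuous in $u\in W^{1,2}(\mathbb{B}^n,N)$ with respect to convergence of the relevant energies, the identity extends from the smooth case, where the computation is unambiguous, by density. I do not expect any genuine obstacle: the lemma is essentially bookkeeping, and the only delicate choice is recognizing that the factor $\rho^{-2}$ in the angular part of $|du|^2$ combines with the $\rho^{n-1}$ from the volume form to produce the correct $\rho^{n-3}$ weight that makes the spherical discrepancy appear naturally.
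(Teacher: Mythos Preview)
Your proposal is correct and follows essentially the same approach as the paper: both arguments pass to polar coordinates, use the radial--angular splitting $|du|^2=|\partial_\rho u|^2+\rho^{-2}|\partial_\theta u|^2$, exploit that $\partial_\rho\tilde u_0\equiv 0$, and combine $\rho^{-2}\cdot\rho^{n-1}=\rho^{n-3}$ to identify the spherical discrepancy. Your brief density remark at the end is a small extra care the paper omits, but otherwise the arguments are the same.
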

The next lemma is simply a particular case of the standard Lyapunov--Schmidt reduction for analytic functionals, adapted to our setting. For a proof of completely analogous statements, see e.g. \cite[Lemma B.1]{EngelsteinSpolaorVelichkov} or \cite[Lemma 2.2. and Appendix A]{engelstein-neumayer-spolaor}.
\begin{lemma}[Lyapunov--Schmidt reduction]\label{Lyapunov-Schmidt reduction}
    Let $N\subset\R^k$ be a closed real-analytic submanifold in $\R^k$ and let $n\ge 3$. Let $u_0\in C^{\infty}(\mathbb{S}^{n-1},N)$ be a smooth harmonic map on $\mathbb{S}^{n-1}$. Let 
    \begin{align*}
        K:=\ker\nabla^2\mathscr{F}_{\mathbb{S}^{n-1}}(0\,;u_0)\subset C^{\infty}(u_0^*TN)
    \end{align*}
    and let $K^{\perp}$ be its orthogonal complement in $L^2(u_0^*TN)$\footnote{Here and throughout, by $L^2(u_0^*TN)$ we mean the space of the $L^2$ sections of the smooth vector bundle $u_0^*TN$.}. Denote by $P_K$ and $P_{K^{\perp}}$ the $L^2$-orthogonal linear projection operators on the subspaces $K$ and $K^{\perp}$ respectively. There exist an open neighbourhood $U\subset K$ of $0$ in $K$ and an analytic function $F:U\to K^{\perp}$ such that the following facts hold.
    \begin{enumerate}[(i)]
        \item $F(0)=0$ and $\nabla F(0)=0$.
        \item $P_{K^{\perp}}(\nabla\mathscr{F}_{\mathbb{S}^{n-1}}(\varphi+F(\varphi)))=0$ for every $\varphi\in U$.
        \item $P_{K}(\nabla\mathscr{F}_{\mathbb{S}^{n-1}}(\varphi+F(\varphi)))=\nabla\mathfrak{q}(\varphi)$ for every $\varphi\in U$, where $\mathfrak{q}:U\to\R$ is the analytic map on $U$ given by 
        \begin{align*}
            \mathfrak{q}(\varphi):=\varphi+F(\varphi) \quad\forall\,\varphi\in U.
        \end{align*}
        \item There exists a constant $C>0$ such that very $\varphi,\eta\in U$, we have 
        \begin{align*}
            \|\nabla F(\varphi)[\eta]\|_{C^{2,\alpha}(u_0^*TN)}\le C\|\eta\|_{C^{0,\alpha}(u_0^*TN)}.
        \end{align*}
    \end{enumerate}
\end{lemma}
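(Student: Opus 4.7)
The plan is to obtain $F$ via the analytic implicit function theorem applied to the normal component of $\nabla\mathscr{F}_{\mathbb{S}^{n-1}}$. Let $\mathcal{L}_{\mathscr{F}}$ be the linear elliptic operator arising as $\nabla^2\mathscr{F}_{\mathbb{S}^{n-1}}(0\,;u_0)$, as discussed in Remark~\ref{Remark: ellipticity of the second variation of the Dirichlet energy discrepancy}. Because $\mathbb{S}^{n-1}$ is closed and $\mathcal{L}_{\mathscr{F}}$ is a formally self-adjoint elliptic operator with compact resolvent on $u_0^*TN$, Fredholm theory ensures that the kernel $K\subset C^{\infty}(u_0^*TN)$ is finite-dimensional, that the $L^2$-splitting
\begin{align*}
C^{2,\alpha}(u_0^*TN)=K\oplus \bigl(K^{\perp}\cap C^{2,\alpha}(u_0^*TN)\bigr),
\end{align*}
holds, and that the restriction $\mathcal{L}_{\mathscr{F}}\colon K^{\perp}\cap C^{2,\alpha}(u_0^*TN)\to K^{\perp}\cap C^{0,\alpha}(u_0^*TN)$ is a topological isomorphism (by self-adjointness of $\mathcal{L}_{\mathscr{F}}$, its range is $K^{\perp}$, and Schauder theory upgrades this to an isomorphism of H\"older spaces).

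Next, I would introduce
\begin{align*}
\Phi\colon (K,\|\cdot\|_{C^{2,\alpha}})\times \bigl(K^{\perp}\cap C^{2,\alpha}(u_0^*TN)\bigr)\longrightarrow K^{\perp}\cap C^{0,\alpha}(u_0^*TN),\qquad
\Phi(\varphi,\psi):=P_{K^{\perp}}\bigl(\nabla\mathscr{F}_{\mathbb{S}^{n-1}}(\varphi+\psi\,;u_0)\bigr).
\end{align*}
By Lemma~\ref{Lemma: the analyticity of energy discrepancy in a neighbourhood of the origin}, $\mathscr{F}_{\mathbb{S}^{n-1}}(\,\cdot\,;u_0)$ is real-analytic on a neighbourhood of $0$ in $C^{2,\alpha}$, hence so is $\Phi$ as a map between the above Banach spaces. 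By Lemma~\ref{Lemma: the first and second variation of the energy discrepancy at the origin}, $\nabla\mathscr{F}_{\mathbb{S}^{n-1}}(0\,;u_0)=0$, so $\Phi(0,0)=0$, while
\begin{align*}
\partial_{\psi}\Phi(0,0)[\eta]=P_{K^{\perp}}\mathcal{L}_{\mathscr{F}}\eta=\mathcal{L}_{\mathscr{F}}\eta\qquad\text{for every }\eta\in K^{\perp}\cap C^{2,\alpha}(u_0^*TN),
\end{align*}
which is an isomorphism by the previous paragraph. The analytic implicit function theorem in Banach spaces (see e.g.~\cite[Section 3.13]{simon-book}) produces, on a neighbourhood $U\subset K$ of $0$, a unique analytic map $F\colon U\to K^{\perp}\cap C^{2,\alpha}(u_0^*TN)$ with $F(0)=0$ and $\Phi(\varphi,F(\varphi))\equiv 0$; this gives~(ii) immediately.

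To complete the remaining items, differentiating $\Phi(\varphi,F(\varphi))=0$ at $\varphi=0$ yields
\begin{align*}
\nabla F(0)=-\mathcal{L}_{\mathscr{F}}^{-1}\circ P_{K^{\perp}}\mathcal{L}_{\mathscr{F}}\big|_{K}=0,
\end{align*}
since $K=\ker\mathcal{L}_{\mathscr{F}}$; this is the second half of~(i). For~(iii), I define the reduced analytic functional
\begin{align*}
\mathfrak{q}(\varphi):=\mathscr{F}_{\mathbb{S}^{n-1}}(\varphi+F(\varphi)\,;u_0),\qquad \varphi\in U,
\end{align*}
and use the chain rule to write, for any $\eta\in K$,
\begin{align*}
\nabla\mathfrak{q}(\varphi)[\eta]=\bigl\langle\nabla\mathscr{F}_{\mathbb{S}^{n-1}}(\varphi+F(\varphi)\,;u_0),\,\eta+\nabla F(\varphi)[\eta]\bigr\rangle_{L^2};
\end{align*}
the term involving $\nabla F(\varphi)[\eta]\in K^{\perp}$ vanishes because of~(ii), so the identity in~(iii) reduces to the statement that the $K$-component of $\nabla\mathscr{F}_{\mathbb{S}^{n-1}}(\varphi+F(\varphi)\,;u_0)$ represents $\nabla\mathfrak{q}(\varphi)$, which is exactly what the computation delivers. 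Finally, for~(iv), differentiating the equation $\Phi(\varphi,F(\varphi))=0$ in $\varphi$ gives
\begin{align*}
\mathcal{L}_{\mathscr{F}}\,\nabla F(\varphi)[\eta]=-P_{K^{\perp}}\bigl(\nabla^2\mathscr{F}_{\mathbb{S}^{n-1}}(\varphi+F(\varphi)\,;u_0)-\mathcal{L}_{\mathscr{F}}\bigr)[\eta+\nabla F(\varphi)[\eta]],
\end{align*}
and the desired $C^{2,\alpha}$–$C^{0,\alpha}$ estimate follows from the Schauder bound for $\mathcal{L}_{\mathscr{F}}^{-1}$ combined with continuity of $\nabla^2\mathscr{F}_{\mathbb{S}^{n-1}}$ in a neighbourhood of $0$.

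The most delicate point is the functional-analytic set-up: one has to be careful that the $L^2$-orthogonal projection $P_{K^{\perp}}$ preserves H\"older regularity (which it does because $K$ consists of smooth sections, by elliptic regularity), so that the splitting of $C^{2,\alpha}$ and $C^{0,\alpha}$ is compatible with the $L^2$-projections and with the analytic structure of $\mathscr{F}_{\mathbb{S}^{n-1}}$ guaranteed by Lemma~\ref{Lemma: the analyticity of energy discrepancy in a neighbourhood of the origin}. Once this compatibility is verified, everything else is a routine application of the analytic implicit function theorem and Schauder estimates.
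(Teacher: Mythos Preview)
The paper does not prove this lemma; it merely states that it is a particular case of the standard Lyapunov--Schmidt reduction and refers to \cite[Lemma B.1]{EngelsteinSpolaorVelichkov} and \cite[Lemma 2.2 and Appendix A]{engelstein-neumayer-spolaor} for proofs of analogous statements. Your sketch is exactly the standard argument those references carry out: splitting $C^{2,\alpha}$ along $K\oplus K^{\perp}$ using elliptic Fredholm theory for $\mathcal{L}_{\mathscr{F}}$, applying the analytic implicit function theorem to $P_{K^{\perp}}\nabla\mathscr{F}_{\mathbb{S}^{n-1}}$, and reading off (i)--(iv) from differentiation of the implicit relation together with Schauder estimates. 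So your approach coincides with the one the paper defers to.

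Two minor remarks. First, in item (iii) the paper's displayed definition $\mathfrak{q}(\varphi):=\varphi+F(\varphi)$ is a typo (it is not $\mathbb{R}$-valued); you correctly interpret $\mathfrak{q}(\varphi)=\mathscr{F}_{\mathbb{S}^{n-1}}(\varphi+F(\varphi)\,;u_0)$, which is also how $\mathfrak{q}$ is used later in the paper (cf.\ the definition of $f$ in \eqref{def: f}). Second, in your argument for (iv), it is worth making explicit that $\eta\in U\subset K$ lies in a finite-dimensional space, so that $\|\eta\|_{C^{2,\alpha}}\le C\|\eta\|_{C^{0,\alpha}}$; this is what lets you pass from the second-order operator on the right-hand side of your differentiated identity to a $C^{0,\alpha}$-bound on $\eta$.
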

\section{The symmetric log-epiperimetric inequality for harmonic maps (Theorem \ref{Theorem: (log)-epiperimetric inequality for harmonic maps})}  \label{sec: proof of epiperimetric ineq}
As by the assumptions of Theorem \ref{Theorem: (log)-epiperimetric inequality for harmonic maps}, let $N\subset\R^k$ be a closed real-analytic submanifold in $\R^k$ and let $n\in\N$ be such that $n\ge 3$. Let $u_0\in C^{\infty}(\mathbb{S}^{n-1},N)$ be a harmonic map on $\mathbb{S}^{n-1}$. By Remark \ref{Remark: ellipticity of the second variation of the Dirichlet energy discrepancy}, we know that 
\begin{align*}
    K:=\ker\nabla^2\mathscr{F}_{\mathbb{S}^{n-1}}(0\,;u_0)
\end{align*}
is a finite-dimensional linear subspace of $C^{\infty}(u_0^*TN)$. Let $K^{\perp}\subset L^2(u_0^*TN)$ be the orthogonal complement of $K$ inside $L^2(u_0^*TN)$. Let $0\in U\subset K$ and  $F:U\to K^{\perp}$ be the real-analytic function given by the Lyapunov--Schmidt reduction (Lemma \ref{Lyapunov-Schmidt reduction}) of $\mathscr{F}_{\mathbb{S}^{n-1}}(\,\cdot\,\,;u_0)$ at the critical point $0$.  Denote by $P_K$ and $P_{K^{\perp}}$ the $L^2$-orthogonal linear projection operators on the subspaces $K$ and $K^{\perp}$ respectively.
\newline
Let $\pi_N:W_{\delta_0}(N)\to N$ be the nearest-point projection from a tubular neighbourhood $W_{\delta_0}(N)$ of $N$ into $N$. Fix any $\alpha\in(0,1)$, and let $\delta\in (0,\delta_0)$ be small enough so that for every $u\in C^{2,\alpha}(\mathbb{S}^{n-1},N)$ such that 
\begin{align*}
    \|u-u_0\|_{C^{2,\alpha}(\mathbb{S}^{n-1})}<\delta
\end{align*}
we have $P_K\varphi_u\in U$ and $\varphi_u,F(P_K\varphi_u)\in\mathcal{B}_{\delta_0/2}(0)$, where $\varphi_u\in C^{2,\alpha}(u_0^*TN)$ is such that $u=\pi_N(u_0+\varphi_u)$, and where we denoted by $\mathcal{B}_r(f)$ the ball of radius $r$ in $C^{2,\alpha}(u_0^*TN)$. 
Notice that $\delta\in (0,\delta_0)$ can always be chosen in such a way in view of Lemma \ref{Lemma: the analyticity of energy discrepancy in a neighbourhood of the origin}-(ii) and thanks to the properties of the Lyapunov-Schmidt reduction (Lemma \ref{Lyapunov-Schmidt reduction}-(i),(iv)).
Then, we write
\begin{align*}
    \varphi_u&=P_K\varphi_u+P_{K^{\perp}}\varphi_u\\
    & = P_K\varphi_u+F(P_K\varphi_u)+(P_{K^{\perp}}\varphi_u-F(P_K\varphi_u))\\
    &=P_K\varphi_u+F(P_K\varphi_u)+\varphi_u^{\perp},
\end{align*}
where we have defined 
\begin{align*}
    \varphi_u^{\perp}:=P_{K^{\perp}}\varphi_u-F(P_K\varphi_u)\in K^{\perp}.
\end{align*}
Since the second variation $\nabla^2\mathscr{F}_{\mathbb{B}^n}(0\,;u_0)$ is induced by an elliptic operator $\mathcal{L}_{\mathscr{F}}$ on a compact manifold (see Remark \ref{Remark: ellipticity of the second variation of the Dirichlet energy discrepancy}) and since every elliptic operator on a compact manifold has compact resolvent, by the spectral theory for operators with compact resolvent we know that there exist a countable orthonormal basis $\{\phi_j\}_{j\in\N}\subset C^{\infty}(u_0^*TN)$ of $W^{1,2}(u_0^*TN)$ and countably many real numbers\footnote{This follows from the symmetry of $\nabla^2\mathscr{F}_{\mathbb{B}^n}(0\,;u_0)$, which translates in the self-adjointness of $\mathcal{L}_{\mathscr{F}}$.} $\{\lambda_j\}_{j\in\N}$ such that 
\begin{align*}
    \mathcal{L}_{\mathscr{F}}\phi_j=\lambda_j\phi_j \qquad\forall\,j\in\N. 
\end{align*}
Moreover, every eigenvalue $\lambda_j$ of $\mathcal{L}_{\mathscr{F}}$ has finite multiplicity. We let
\begin{align*}
    \ell:=\dim K<+\infty
\end{align*}
and we assume that the eigenfunctions $\phi_j$ are ordered in such a way that $\{\phi_1,...,\phi_{\ell}\}$ form an orthonormal basis of $K$. Define the index sets 
\begin{align*}
    J_{+}&:=\{j\in\N \mbox{ : } \lambda_j>0\}\\
    J_{-}&:=\{j\in\N \mbox{ : } \lambda_j<0\}
\end{align*}
and we $\{a_j\}_{j\in J_{-}\cap J_{+}}\subset\R$ and $\{b_{1},...,b_{\ell}\}\subset\R$ be such that 
\begin{align*}
    &\varphi_u^{\perp}=\sum_{j\in J_{-}}a_j\phi_j+\sum_{j\in J_{+}}a_j\phi_j=:\varphi_{u,-}^{\perp}+\varphi_{u,+}^{\perp}, \qquad \text{and} \qquad P_K\varphi_u=\sum_{j=1}^{\ell}b_j\phi_j.
\end{align*}
Since $\varphi_u,F(P_K\varphi_u)\in\mathcal{B}_{\frac{\delta_0}{2}}(0)$ and $P_K\varphi_u\in U$, there exists $\xi>0$ with $B_{\xi}^{\ell}(b)\subset\R^{\ell}$ such that for every $x=(x_1,...,x_{\ell})\in B_{\xi}^{\ell}(b)$ we have 
\begin{align*}
\sum_{j=1}^{\ell}x_j\phi_j\in U \qquad \text{and} \qquad \sum_{j=1}^{\ell}x_j\phi_j+F\bigg(\sum_{j=1}^{\ell}x_j\phi_j\bigg)\in\mathcal{B}_{\frac{\delta_0}{2}}(0).
\end{align*}
Let $f:B_{\xi}^{\ell}(b)\subset\R^{\ell}\to\R$ be the real-analytic function given by 
\begin{align} \label{def: f}
    f(x):=\mathscr{F}_{\mathbb{S}^{n-1}}\bigg(\sum_{j=1}^{\ell}x_j\phi_j+F\bigg(\sum_{j=1}^{\ell}x_j\phi_j\bigg)\,;u_0\bigg) \qquad\forall\,x\in B_{\xi}^{\ell}(b).
\end{align}
Let $t_0\in(0,1)$ and let $v:[0,t_0]\to B_{\xi}^{\ell}(b)$ the smooth vector field on $B_{\xi}^{\ell}(b)$ solving on $[0,t_0]$ the following normalized gradient flow equation for $f$ with initial condition $b=(b_1,...,b_{\ell})\in\R^{\ell}$:
\begin{align*}
    v'(t)&=\begin{cases}\displaystyle{-\frac{\nabla f(v(t))}{\lvert\nabla f(v(t))\rvert}} & \mbox{ if } f(v(t))>\displaystyle{\frac{f(b)}{2}}\\0 & \mbox{ otherwise;}\end{cases}\\
    v(0)&=b.
\end{align*}
Note that this gradient flow is a \textit{finite dimensional harmonic map heat flow}. Let then $\eta,\eta_-,\eta_+:[0,1]\to\R$ be the cut-off functions given by  
\begin{align} \label{def: eta and eta+}
    \eta(\rho):=\varepsilon_ff(b)^{1-\gamma}\sqrt{n}C(1-\rho), \quad \eta_-(\rho):=1+(1-\rho)\beta\varepsilon \quad \text{and} \quad 
    \eta_+(\rho):=1-(1-\rho)\alpha\varepsilon,
\end{align}
for all $\rho \in [0, 1]$, and where $\varepsilon,\varepsilon_f,C,\alpha,\beta >0$ and $\gamma\in[0,1)$ are parameters to be chosen later in the proof. For now we just assume that 
\begin{align*}
    \varepsilon_ff(b)^{1-\gamma}\sqrt{n}C < t_0,
\end{align*}
so that $0\le\eta<t_0$. Then, let $\mu:[0,1]\to U$ be given by
\begin{align*}
    \mu(\rho):=\sum_{j=1}^{\ell}v_j(\eta(\rho))\phi_j(\theta) \qquad\forall\,\rho\in[0,1] \quad \forall \, \theta \in \mathbb{S}^{n - 1}. 
\end{align*}
Define $\varphi_{\hat u}\in W^{1,2}(\mathbb{B}^n,u_0^*TN)$ by
\begin{align}\label{Equation: definition of the competitor before projection}
    \varphi_{\hat u}(\rho,\,\cdot\,):=\mu(\rho)+F(\mu(\rho))+\eta_-(\rho)\varphi_{u,-}^{\perp}(\cdot) + \eta_+(\rho)\varphi_{u,+}^{\perp}(\cdot)
\end{align}
for every $\rho\in(0,1]$. Notice that $\varphi_{\hat u}(1,\,\cdot\,)=\varphi_u$ and that $\varphi_{\hat u}(\rho,\,\cdot\,)\in C^{2,\alpha}(\mathbb{S}^{n-1},N)$ for every $\rho\in(0,1]$. Moreover, for every $\rho\in(0,1]$ we have
\begin{align*}
    \|\varphi_{\hat u}(\rho,\,\cdot\,)-\varphi_u\|_{C^2(\mathbb{S}^{n-1})}&=\|\varphi_{\hat u}(\rho,\,\cdot\,)-\varphi_{\hat u}(1,\,\cdot\,)\|_{C^2(\mathbb{S}^{n-1})}\\
    &\le\tilde C\big(\|\eta'\|_{L^{\infty}((0,1))}+\|\eta_{-}'\|_{L^{\infty}((0,1))}+\|\eta_{+}'\|_{L^{\infty}((0,1))}\big)<\frac{\delta_0}{4}
\end{align*}
where $\tilde C>0$ depends only on the $C^{2}$-norms of the eigenfunctions $\phi_1,...,\phi_{\ell}$ and the constants $\varepsilon,\varepsilon_f,C,\alpha>0$, $\gamma\in[0,1)$ are properly chosen in the definition of the cut-off functions $\eta$ and $\eta_+$ in such a way that
\begin{align*}
    \varepsilon_ff(b)^{1-\gamma}\sqrt{n}C<\frac{\delta_0}{12\tilde C} \quad \text{and} \quad \alpha\varepsilon <\frac{\delta_0}{12\tilde C}.
\end{align*}
Thus we have 
\begin{align}\label{Equation: the slices are close to zero}
    \|\varphi_{\hat u}(\rho,\,\cdot\,)\|_{C^2(\mathbb{S}^{n-1})}\le\|\varphi_{\hat u}(\rho,\,\cdot\,)-\varphi_u\|_{C^2(\mathbb{S}^{n-1})}+\|\varphi_u\|_{C^2(\mathbb{S}^{n-1})}<\frac{\delta_0}{2} \qquad\forall\,\rho\in(0,1].
\end{align}
Hence, lastly we can define the competitor $\hat u\in W^{1,2}(\mathbb{B}^n,N)$ by
\begin{align*}
    \hat u(\rho,\,\cdot\,):=\pi_N(u_0+\varphi_{\hat u}(\rho,\,\cdot\,))
\end{align*}
for every $\rho\in(0,1]$. First, notice that
\begin{align*}
    \hat u|_{\mathbb{S}^{n-1}}=\pi_N(u_0+\varphi_{\hat u}(1,\,\cdot\,))=\pi_N(u_0+\varphi_{u})=u.
\end{align*}
By Lemma \ref{Lemma: slicing lemma} we have the estimate 
\begin{align} \label{equation: starting estimate}
\begin{split}
    \mathscr{E}_{\mathbb{B}^n}(\hat u\,;\tilde u_0)-\mathscr{E}_{\mathbb{B}^n}(\tilde u\,;\tilde u_0)&\le\int_0^1\big(\mathscr{E}_{\mathbb{S}^{n-1}}(\hat u(\rho,\,\cdot\,)\,; u_0)-\mathscr{E}_{\mathbb{S}^{n-1}}(u\,;u_0)\big)\rho^{n-3}\, d\mathcal{L}^1(\rho)\\
    &\quad+\int_0^1\int_{\mathbb{S}^{n-1}}\lvert\partial_{\rho}\hat u(\rho,\theta)\rvert^2\, d\mathscr{H}^{n-1}(\theta)\rho^{n-1}\, d\mathcal{L}^1(\rho)=\operatorname{I}+\operatorname{II},
\end{split}
\end{align}
where we have defined
\begin{align*}
    \operatorname{I}&:=\int_0^1\big(\mathscr{E}_{\mathbb{S}^{n-1}}(\hat u(\rho,\,\cdot\,)\,; u_0)-\mathscr{E}_{\mathbb{S}^{n-1}}(u\,;u_0)\big)\rho^{n-3}\, d\mathcal{L}^1(\rho)\\
    \operatorname{II}&:=\int_0^1\int_{\mathbb{S}^{n-1}}\lvert\partial_{\rho}\hat u(\rho,\theta)\rvert^2\, d\mathscr{H}^{n-1}(\theta)\rho^{n-1}\, d\mathcal{L}^1(\rho).
\end{align*}
Let $C_F>0$ be the constant given by Lemma \ref{Lyapunov-Schmidt reduction}-(iv) and notice that, by letting 
\begin{align*}
    C_N&:=\|d\pi_N\|_{L^{\infty}(\overline{W_{\frac{\delta_0}{2}}(N)})}^2<+\infty\\
    C_{n-1}&:=\mathscr{H}^{n-1}(\mathbb{S}^{n-1})\\
    \hat C&:=C_N(1+C_F^2)C_{n-1}\bigg(C^2+\frac{\alpha^2}{n}\bigg), 
\end{align*}
we have
\begin{align} \label{equation: estimate for II}
\begin{split}
    \operatorname{II}&=\int_0^1\int_{\mathbb{S}^{n-1}}\lvert\partial_{\rho}\hat u(\rho,\theta)\rvert^2\, d\mathscr{H}^{n-1}(\theta)\rho^{n-1}\, d\mathcal{L}^1(\rho)\\
    &\le C_N\int_0^1\int_{\mathbb{S}^{n-1}}\lvert\partial_{\rho}\varphi_{\hat u}(\rho,\theta)\rvert^2\, d\mathscr{H}^{n-1}(\theta)\rho^{n-1}\, d\mathcal{L}^1(\rho)\\
    &\le C_N\int_{0}^1\int_{\mathbb{S}^{n-1}}\big((\eta'(\rho))^2(1+\|\nabla F(v)[v']\|_{L^{\infty}((0,t_0))}^2)\, d\mathscr{H}^{n-1}(\theta)\, d\mathcal{L}^1(\rho)\\
    &\quad +C_N\int_{0}^1\int_{\mathbb{S}^{n-1}}(\eta_-'(\rho))^2\lvert\varphi_{u,-}^{\perp}(\theta)\rvert^2\big)\rho^{n-1}\, d\mathscr{H}^{n-1}(\theta)\, d\mathcal{L}^1(\rho)\\
    &\quad +C_N\int_{0}^1\int_{\mathbb{S}^{n-1}}(\eta_+'(\rho))^2\lvert\varphi_{u,+}^{\perp}(\theta)\rvert^2\big)\rho^{n-1}\, d\mathscr{H}^{n-1}(\theta)\, d\mathcal{L}^1(\rho)\\
    &\le C_N(1+C_F^2)C_{n-1}\int_0^1(\eta'(\rho))^2\rho^{n-1}\, d\mathcal{L}^1(\rho)+C_N\|\varphi_{u,-}^{\perp}\|_{L^2(\mathbb{S}^{n-1})}^2\int_0^1(\eta_{-}'(\rho))^2\rho^{n-1}\, d\mathcal{L}^1(\rho)\\
    &\quad +C_N\|\varphi_{u,+}^{\perp}\|_{L^2(\mathbb{S}^{n-1})}^2\int_0^1(\eta_{+}'(\rho))^2\rho^{n-1}\, d\mathcal{L}^1(\rho)\\
    &\le C_N(1+C_F^2)C_{n-1}\bigg(\int_0^1\varepsilon_f^2f(b)^{2-2\gamma}C^2n\rho^{n-1}\, d\mathcal{L}^1(\rho)+\|\varphi_{u,-}^{\perp}\|_{L^2(\mathbb{S}^{n-1})}^2\int_0^1\varepsilon^2\beta^2\rho^{n-1}\, d\mathcal{L}^1(\rho)\\
    &\quad +\|\varphi_{u,+}^{\perp}\|_{L^2(\mathbb{S}^{n-1})}^2\int_0^1\varepsilon^2\alpha^2\rho^{n-1}\, d\mathcal{L}^1(\rho)\bigg)\\
    &\le\hat C\Big(\varepsilon_f^2f(b)^{2-2\gamma}+\varepsilon^2\big(\|\varphi_{u,-}^{\perp}\|_{L^2(\mathbb{S}^{n-1})}^2+\|\varphi_{u,+}^{\perp}\|_{L^2(\mathbb{S}^{n-1})}^2\big)\Big)\\
    &\le\hat C\big(\varepsilon_f^2f(b)^{2-2\gamma}+\varepsilon^2\|\varphi_{u}^{\perp}\|_{W^{1,2}(\mathbb{S}^{n-1})}^2\big).
\end{split}
\end{align}
Now we turn to estimate $\operatorname{I}$. We notice that, by \eqref{Equation: the slices are close to zero}, we have
\begin{align*}
    \mathscr{E}_{\mathbb{S}^{n-1}}(\hat u(\rho,\,\cdot\,)\,; u_0)-\mathscr{E}_{\mathbb{S}^{n-1}}(u\,;u_0)&=\mathscr{F}_{\mathbb{S}^{n-1}}(\varphi_{\hat u}(\rho,\,\cdot\,)\,; u_0)-\mathscr{F}_{\mathbb{S}^{n-1}}(\varphi_u\,;u_0)\\
    &=\operatorname{III}+\operatorname{IV},
\end{align*}
with
\begin{align*}
    \operatorname{III}&:=\mathscr{F}_{\mathbb{S}^{n-1}}(\varphi_{\hat u}(\rho,\,\cdot\,)\,; u_0)-\mathscr{F}_{\mathbb{S}^{n-1}}(\mu(\rho)+F(\mu(\rho))\,;u_0)\\
    &\,\,\quad-\big(\mathscr{F}_{\mathbb{S}^{n-1}}(\varphi_u\,;u_0)-\mathscr{F}_{\mathbb{S}^{n-1}}(P_K\varphi_u+F(P_K\varphi_u)\,;u_0)\big)\\
    \operatorname{IV}&:=\mathscr{F}_{\mathbb{S}^{n-1}}(\mu(\rho)+F(\mu(\rho))\,;u_0)-\mathscr{F}_{\mathbb{S}^{n-1}}(P_K\varphi_u+F(P_K\varphi_u)\,;u_0).
\end{align*}
Letting $\psi_{\rho}:=\varphi_{\hat u}(\rho,\,\cdot\,)-\mu(\rho) - F(\mu(\rho))\in K^{\perp}$, by Taylor expanding $\mathscr{F}_{\mathbb{S}^{n-1}}(\,\cdot\,\,;u_0)$ we get that
\begin{align*}
    \operatorname{III}&=\nabla\mathscr{F}_{\mathbb{S}^{n-1}}(\mu(\rho)+F(\mu(\rho)\,;u_0)[\psi_{\rho}]+\nabla^2 \mathscr{F}_{\mathbb{S}^{n-1}}(\mu(\rho)+F(\mu(\rho)+s_1\psi_{\rho}\,;u_0)[\psi_{\rho},\psi_{\rho}]\\
    &\quad-\nabla\mathscr{F}_{\mathbb{S}^{n-1}}(P_{K}\varphi_u+F(P_K\varphi_u)\,;u_0)[\varphi_{u}^{\perp}]\\
    &\quad-\nabla^2 \mathscr{F}_{\mathbb{S}^{n-1}}(P_{K}\varphi_u+F(P_K\varphi_u)+s_2\varphi_u^{\perp}\,;u_0)[\varphi_u^{\perp},\varphi_u^{\perp}]\\
    &=\nabla^2\mathscr{F}_{\mathbb{S}^{n-1}}(\mu(\rho)+F(\mu(\rho)+s_1\psi_{\rho}\,;u_0)[\psi_{\rho},\psi_{\rho}]\\
    &\quad-\nabla^2\mathscr{F}_{\mathbb{S}^{n-1}}(P_{K}\varphi_u+F(P_K\varphi_u)+s_2\varphi_u^{\perp}\,;u_0)[\varphi_u^{\perp},\varphi_u^{\perp}]
\end{align*}
for some $s_1,s_2\in[0,1]$, where in the second equality we have used that $\psi_{\rho},\varphi_u^{\perp}\in K^{\perp}$ and Lemma \ref{Lyapunov-Schmidt reduction}-(ii). Note that to avoid having factors $1/2$ in front of the Hessian term, we will abuse notation and assume that our definition of Hessian already incorporates this value.  
By using the analyticity of $\mathscr{F}_{\mathbb{S}^{n-1}}(\,\cdot\,\,;u_0)$ around the zero section $0\in C^{2,\alpha}(u_0^*TN)$, and in particular the fact that its second variation is locally Lipschitz around $0\in C^{2,\alpha}(u_0^*TN)$ (it is actually smooth around $0\in C^{2,\alpha}(u_0^*TN)$) we get that there exists $L>0$ such that 
\begin{align*}
    \lvert\nabla^2\mathscr{F}_{\mathbb{S}^{n-1}}(\xi\,;u_0)[\zeta,\zeta]-\nabla^2\mathscr{F}_{\mathbb{S}^{n-1}}(0\,;u_0)[\zeta,\zeta]\rvert\le L\|\xi\|_{C^{2,\alpha}(\mathbb{S}^{n-1})}\|\zeta\|_{W^{1,2}(\mathbb{S}^{n-1})}^2
\end{align*}
for every $\zeta\in C^{2,\alpha}(u_0^*TN)$ and $\xi\in C^{2,\alpha}(u_0^*TN)$ sufficiently close to $0$ in the $C^{2,\alpha}$-norm. Thus, we get
\begin{align}\label{Equation: estimate 1}
\begin{split}
    \operatorname{III}&\le\nabla^2\mathscr{F}_{\mathbb{S}^{n-1}}(0\,;u_0)[\psi_{\rho},\psi_{\rho}]-\nabla^2\mathscr{F}_{\mathbb{S}^{n-1}}(0\,;u_0)[\varphi_u^{\perp},\varphi_u^{\perp}]\\
    &\quad+L\|\mu(\rho)+F(\mu(\rho))+s_1\psi_{\rho}\|_{C^{2,\alpha}(\mathbb{S}^{n-1})}\|\psi_{\rho}\|_{W^{1,2}(\mathbb{S}^{n-1})}^2\\
    &\quad+L\|P_K\varphi_u+F(P_K\varphi_u)+s_2\varphi_u^{\perp}\|_{C^{2,\alpha}(\mathbb{S}^{n-1})}\|\varphi_u^{\perp}\|_{W^{1,2}(\mathbb{S}^{n-1})}^2\\
    &\le\nabla^2\mathscr{F}_{\mathbb{S}^{n-1}}(0\,;u_0)[\psi_{\rho},\psi_{\rho}]-\nabla^2\mathscr{F}_{\mathbb{S}^{n-1}}(0\,;u_0)[\varphi_u^{\perp},\varphi_u^{\perp}]\\
    &\quad+L\|\mu(\rho)+F(\mu(\rho))+s_1\psi_{\rho}\|_{C^{2,\alpha}(\mathbb{S}^{n-1})}\|\psi_{\rho}\|_{W^{1,2}(\mathbb{S}^{n-1})}^2\\
    &\quad+L\big(\|P_K\varphi_u\|_{C^{2,\alpha}(\mathbb{S}^{n-1})}+\|\varphi_u^{\perp}\|_{C^{2,\alpha}(\mathbb{S}^{n-1})}\big)\|\varphi_u^{\perp}\|_{W^{1,2}(\mathbb{S}^{n-1})}^2
\end{split}
\end{align}
Notice that, by definition \eqref{Equation: definition of the competitor before projection}, we have
\begin{align*}
    \psi_{\rho}=\varphi_{\hat u}(\rho,\,\cdot\,)-\mu(\rho) - F(\mu(\rho))=\eta_-(\rho)\varphi_{u,-}^{\perp}+\eta_{+}(\rho)\varphi_{u,+}^{\perp}.
\end{align*}
Hence
\begin{align}\label{Equation: estimate 2}
\begin{split}
    \nabla^2&\mathscr{F}_{\mathbb{S}^{n-1}}(0\,;u_0)[\psi_{\rho},\psi_{\rho}]-\nabla^2\mathscr{F}_{\mathbb{S}^{n-1}}(0\,;u_0)[\varphi_u^{\perp},\varphi_u^{\perp}] \\
    &=(\eta_{-}(\rho)^2-1)\nabla^2\mathscr{F}_{\mathbb{S}^{n-1}}(0\,;u_0)[\varphi_{u,-}^{\perp},\varphi_{u,-}^{\perp}] +(\eta_{+}(\rho)^2-1)\nabla^2\mathscr{F}_{\mathbb{S}^{n-1}}(0\,;u_0)[\varphi_{u,+}^{\perp},\varphi_{u,+}^{\perp}]
\end{split}
\end{align}
and
\begin{align}\label{Equation: estimate 3}
\begin{split}
    \|\mu(\rho)+F(\mu(\rho))&+s_1\psi_{\rho}\|_{C^{2,\alpha}(\mathbb{S}^{n-1})}\|\psi_{\rho}\|_{W^{1,2}(\mathbb{S}^{n-1})}^2\\
    &\le C\big(\|\mu(\rho)\|_{C^{2,\alpha}(\mathbb{S}^{n-1})}+\|\varphi_u^{\perp}\|_{C^{2,\alpha}(\mathbb{S}^{n-1})}\big)\|\varphi_u^{\perp}\|_{W^{1,2}(\mathbb{S}^{n-1})}^2.
\end{split}
\end{align}
By plugging \eqref{Equation: estimate 2} and \eqref{Equation: estimate 3} in \eqref{Equation: estimate 1} we get
\begin{align} \label{equation: estimate III before integrating}
\begin{split}
    \operatorname{III}&\le(\eta_{-}(\rho)^2-1)\nabla^2\mathscr{F}_{\mathbb{S}^{n-1}}(0\,;u_0)[\varphi_{u,-}^{\perp},\varphi_{u,-}^{\perp}] +(\eta_{+}(\rho)^2-1)\nabla^2\mathscr{F}_{\mathbb{S}^{n-1}}(0\,;u_0)[\varphi_{u,+}^{\perp},\varphi_{u,+}^{\perp}]\\
    &\quad +C\big(\|\mu(\rho)\|_{C^{2,\alpha}(\mathbb{S}^{n-1})}+\|P_K\varphi_u\|_{C^{2,\alpha}(\mathbb{S}^{n-1})}+2\|\varphi_u^{\perp}\|_{C^{2,\alpha}(\mathbb{S}^{n-1})}\big)\|\varphi_u^{\perp}\|_{W^{1,2}(\mathbb{S}^{n-1})}^2.
\end{split}
\end{align}
By definition of $\eta_+$ and $\eta_-$, cf. \eqref{def: eta and eta+}, up to choosing $\alpha,\beta>0$ big enough and $\eps>0$ small enough depending on $n\ge 3$ we have
\begin{equation*}
  \int_{0}^{1}(\eta_{-}^{2}(\rho) - 1) \rho^{n - 3} \, d\rho \geq \frac{4}{n-2}\,\varepsilon.   
\end{equation*}
and
\begin{equation*}
  \int_{0}^{1}(\eta_{+}^{2}(\rho) - 1) \rho^{n - 3} \, d\rho \leq -\frac{4}{n-2}\,\varepsilon.   
\end{equation*}
Consequently, multiplying \eqref{equation: estimate III before integrating} by $\rho^{n - 3}$ and integrating it with respect to $\rho$, we infer
\begin{align} \label{equation: estimate III after integrating}
\begin{split}
    \int_{0}^{1} & \operatorname{III} \rho^{n - 3} \, d\mathcal{L}^1(\rho) \\
    & \leq\frac{4}{n-2}\varepsilon \max_{\lambda_j < 0} \lambda_j \Vert \varphi_{u,-}^{\perp} \Vert_{W^{1, 2}(\mathbb{S}^{n - 1})}^2 - \frac{4}{n-2}\,\varepsilon \min_{\lambda_j > 0} \lambda_j \Vert \varphi_{u,+}^{\perp} \Vert_{W^{1, 2}(\mathbb{S}^{n - 1})}^2 \\
    & \quad + C\big(\|\mu(\rho)\|_{C^{2,\alpha}(\mathbb{S}^{n-1})}+\|P_K\varphi_u\|_{C^{2,\alpha}(\mathbb{S}^{n-1})}+\|\varphi_u^{\perp}\|_{C^{2,\alpha}(\mathbb{S}^{n-1})}\big)\|\varphi_u^{\perp}\|_{W^{1,2}(\mathbb{S}^{n-1})}^2 \\
    & \leq - \left( C_{u_0} \varepsilon - C\big(\|\mu(\rho)\|_{C^{2,\alpha}(\mathbb{S}^{n-1})}+\|P_K\varphi_u\|_{C^{2,\alpha}(\mathbb{S}^{n-1})}+\|\varphi_u^{\perp}\|_{C^{2,\alpha}(\mathbb{S}^{n-1})}\big) \right) \|\varphi_u^{\perp}\|_{W^{1,2}(\mathbb{S}^{n-1})}^2, 
\end{split} 
\end{align}
where $C_{u_0} > 0$ is a constant depending only on $n$ and on the spectral gap of the second variation of $\mathscr{F}(\,\cdot\,;\,u_0)$ at $0$, given by
\begin{align*}
    C_{u_0}:=\frac{4}{n-2}\Big(\min_{\lambda_j > 0} \lambda_j -\max_{\lambda_j < 0} \lambda_j\Big).
\end{align*}
We remark that here we need $n \geq 3$ to have integrability of the term $C \int_{0}^{1} \rho^{n - 3} d\rho$. 
Notice now that 
\begin{equation*}
    \vert \mu(\rho) - P_K(\varphi_u) \vert \leq \int_{0}^{\eta(\rho)} \vert \mu^\prime(t) \vert \; dt \leq \vert \eta(\rho) \vert \leq C\varepsilon_f f(b)^{\gamma}, 
\end{equation*}
as well as 
\begin{equation*}
    \left\vert \frac{d}{d\rho} \mu(\rho) \right\vert \leq \varepsilon_f f(b)^{\gamma}, 
\end{equation*}
so that choosing $\varepsilon_f$ sufficiently small, and combining these estimates with elliptic regularity, we have the estimate 
\begin{equation*}
    \Vert \mu(\rho) \Vert_{C^{2, \alpha}(\mathbb{S}^{n - 1})} \leq 2 \Vert P_K\varphi_u \Vert_{C^{2, \alpha}(\mathbb{S}^{n - 1})}. 
\end{equation*}
Whence, choosing $\delta>0$ sufficiently small (depending on $C_{u_0}$) and plugging $\|\varphi_u\|_{C^{2, \alpha}}(\mathbb{S}^{n - 1})<\delta$ in \eqref{equation: estimate III after integrating} , we infer 
\begin{equation} \label{equation: final integral estimate for III}
    \int_{0}^{1} \operatorname{III} \rho^{n - 3} \; d\rho \leq - C_{u_0} \varepsilon \|\varphi_u^{\perp}\|_{W^{1,2}(\mathbb{S}^{n-1})}^2. 
\end{equation}
We are now left with estimating $\operatorname{IV}$. To this sake, we record \L ojasiewicz's inequality for analytic function in $\mathbb{R}^l$, cf. \cite{Loj}. 
\begin{lemma} \label{lemma: finite dimensional Lojasiewicz inequality}
    Consider an open set $U \subset \mathbb{R}^l$, and an analytic function $h \colon U \rightarrow \mathbb{R}$. For every critical point $x \in U$ of $h$, there exist a neighborhood $V$ of $x$, an exponent $\gamma \in (0, 1/2]$, and a constant $K\ge 2$ such that 
    \begin{equation*}
        \vert h(x) - h(y) \vert^{1 - \gamma} \leq K \vert \nabla h(y) \vert, 
    \end{equation*}
    for all $y \in V$. 
\end{lemma}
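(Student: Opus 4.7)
The statement is the classical \L ojasiewicz gradient inequality at a critical point of a real-analytic function, so the plan is to sketch the standard proof via Milnor's curve selection lemma and defer the full argument to the classical literature. Without loss of generality I translate so that $x = 0$ and $h(0) = 0$, reducing the goal to $\vert h(y) \vert^{1-\gamma} \leq K\vert \nabla h(y) \vert$ on a neighbourhood of $0$.

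I argue by contradiction: if no admissible $(\gamma, K)$ satisfies the inequality, then for each $\gamma \in (0, 1/2]$ and $K \geq 2$ the semianalytic set
\begin{equation*}
    A_{\gamma, K} := \bigl\{ y \in U : \vert h(y) \vert^{2(1-\gamma)} > K^2 \vert \nabla h(y) \vert^2 \bigr\}
\end{equation*}
accumulates at $0$. Applying Milnor's curve selection lemma to a suitable element of this family produces a nonconstant real-analytic arc $\phi \colon [0, \epsilon) \to U$ with $\phi(0) = 0$ along which the ratio $\vert \nabla h \vert / \vert h \vert^{1-\gamma}$ tends to $0$. Since $h \circ \phi$ and $\vert \nabla h \vert^2 \circ \phi$ are real-analytic in $t$, they admit Taylor expansions $h(\phi(t)) = a t^\alpha (1 + o(1))$ and $\vert \nabla h(\phi(t)) \vert^2 = b t^\beta (1 + o(1))$ with positive integer exponents and $a, b \neq 0$; the fact that $\nabla h(0) = 0$ forces $\alpha \geq 2$. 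The chain rule identity $(h \circ \phi)'(t) = \langle \nabla h(\phi(t)), \phi'(t) \rangle$ combined with Cauchy--Schwarz fixes the structural relation $\alpha - 1 \geq \beta/2 + \sigma - 1$ between the exponents, where $\sigma \geq 1$ is the vanishing order of $\phi$; in particular $\beta \leq 2\alpha - 2\sigma \leq 2\alpha - 2$. The choice $\gamma_\ast := 1 - \beta/(2\alpha) \in (0, 1/2]$ then makes the ratio $\vert \nabla h(\phi(t)) \vert / \vert h(\phi(t)) \vert^{1-\gamma_\ast}$ bounded below by a positive constant along $\phi$, contradicting its tendency to $0$.

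The main technical obstacle is the uniformity of $\gamma$ across all arcs: the exponent data $(\alpha, \beta, \sigma)$ depend on $\phi$, whereas the final $\gamma$ must be valid for every point in some neighbourhood of $0$. This is resolved by the finiteness of the set of Puiseux-type exponent data produced by real-analytic arcs through $0$, a consequence of Whitney stratification of the zero set $\{\nabla h = 0\}$, or equivalently of the classical \L ojasiewicz inequality comparing the vanishing orders of the two real-analytic functions $h^2$ and $\vert \nabla h \vert^2$. The upper bound $\gamma \leq 1/2$ then reflects that at a critical point the Taylor exponent $\alpha$ along any nontrivial real-analytic arc is at least the Morse value of $2$. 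The full argument is classical and can be found in \cite{Loj}.
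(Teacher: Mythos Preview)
The paper does not prove this lemma at all: it simply records it as the classical \L ojasiewicz gradient inequality and cites \cite{Loj}. So there is no ``paper's own proof'' to compare against; your final sentence deferring to \cite{Loj} already matches the paper's treatment exactly.

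That said, the sketch you offer before deferring has genuine gaps that would matter if you tried to promote it to a self-contained proof. First, for generic $\gamma\in(0,1/2]$ the set $A_{\gamma,K}=\{\lvert h\rvert^{2(1-\gamma)}>K^2\lvert\nabla h\rvert^2\}$ is not semianalytic (the exponent $2(1-\gamma)$ is irrational), so Milnor's curve selection lemma does not apply to it as stated. Second, and more seriously, your uniformity step is circular: you resolve the dependence of $(\alpha,\beta,\sigma)$ on the arc by invoking ``the classical \L ojasiewicz inequality comparing the vanishing orders of $h^2$ and $\lvert\nabla h\rvert^2$'', which is essentially the statement you are trying to prove. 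The standard routes (Hironaka resolution, or \L ojasiewicz's original semianalytic regular-separation argument) avoid this circularity, and that is what the citation to \cite{Loj} is really carrying. Since both you and the paper ultimately rest on that citation, the proposal is acceptable as written; just be aware the preceding sketch is heuristic rather than a proof.
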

In particular, we can apply Lemma \ref{lemma: finite dimensional Lojasiewicz inequality} to $f$ defined in \eqref{def: f}, and infer the existence of a neighborhood $V$ of the origin, constants $K>0$ and $\gamma \in (0, 1/2]$ depending on $u_0$ and $n$, such that $\vert f(v) \vert^{1 - \gamma} \leq K\vert \nabla f(v) \vert$, for every $v \in V$. 
Then, if $f(v(s)) > 0$, for $0 < s < t$, we have 
\begin{equation} \label{equation: FTC}
    f(v(t)) - f(v(0)) = f(v(t)) - f(b) = \int_{0}^{t} \nabla f(v(\tau)) \cdot v^\prime(\tau) \; d\tau = - \int_{0}^{t} \vert \nabla f(v(\tau)) \vert \; d\tau \leq 0. 
\end{equation}
This implies that the function $t \mapsto f(v(t))$ is non-increasing, which in turn implies the existence of $\overline{\tau} > 0$ such that $f(v(t)) \geq f(b)/2 > 0$, for $0 \leq t \leq \overline{\tau}$, and $f(v(t)) \leq f(b)/2$ if $t \geq \overline{\tau}$. We now have two cases. If $\eta(\rho) \leq \overline{\tau}$, we have the following 
\begin{alignat*}{2}
    \operatorname{IV} & = f(v(\eta(\rho))) - f(b) \\
    & \leq - \int_{0}^{\eta(\rho)} \vert \nabla f(v(\tau)) \vert \, d\tau  \qquad && \text{from \eqref{equation: FTC}} \\
    & \leq - K\int_{0}^{\eta(\rho)} \vert f(v(\tau)) \vert^{1 - \gamma} \; d\tau \qquad && \text{from Lemma \ref{lemma: finite dimensional Lojasiewicz inequality}} \\
    & \leq - Kf(v(\eta(\rho)))^{1 - \gamma} \eta(\rho) \qquad && \text{monotonicity of $f$} \\
    & \leq - \frac{K}{2^{1 - \gamma}} f(b)^{1 - \gamma} \eta(\rho)\qquad && \text{definition of $\overline{\tau}$}\\
    & \leq - \frac{K}{2} f(b)^{1 - \gamma} \eta(\rho).
\end{alignat*}
Otherwise, if $\eta(\rho) > \overline{\tau}$, we have
\begin{align*}
\operatorname{IV} = f(v(\eta(\rho))) - f(b) < - \frac{1}{2}f(b) < - \eta(\rho)f(b)^{1 - \gamma}, 
\end{align*}
where for the last inequality we used the inequality $\vert \eta \vert \leq C \varepsilon_f f(b)^{1 - \gamma} < 1/2$ which holds as long as $f(b)$ is small enough. Thus, we obtain
\begin{align*}
    \operatorname{IV}\le - \eta(\rho)f(b)^{1 - \gamma}, 
\end{align*}
and this concludes the estimate for $\operatorname{IV}$. 

We are now able to finish the proof of Theorem \ref{Theorem: (log)-epiperimetric inequality for harmonic maps}. First notice that, by the slicing Lemma \ref{Lemma: slicing lemma}, we have
\begin{align*}
    \mathscr{E}_{\mathbb{B}^n}(\tilde u\,;\tilde u_0) & = \frac{1}{n - 2}\mathscr{E}_{\mathbb{S}^{n-1}}(u \; ;u_0) \\
    & = \frac{1}{n - 2}\Big(\big(\mathscr{F}_{\mathbb{S}^{n-1}}(\varphi_u\,;u_0)-\mathscr{F}_{\mathbb{S}^{n-1}}(P_K \varphi_u + F(P_K\varphi_u)\,;u_0)\big)\\ 
    &\quad+\frac{1}{n-2}\mathscr{F}_{\mathbb{S}^{n-1}}(P_K \varphi_u + F(P_K\varphi_u)\,;u_0)\Big)\\
    &=\frac{1}{n - 2}\big(\mathscr{F}_{\mathbb{S}^{n-1}}(\varphi_u^{\perp}\,;u_0) + f(b)\big).
\end{align*}
We have two cases.
\begin{enumerate}[(a)]
    \item First, assume $\lvert f(b)\rvert<\frac{(n-2)C_{u_0}}{4}\Vert \varphi_{u}^{\perp} \Vert_{W^{1, 2}(\mathbb{S}^{n - 1})}^2$, for some universal constant $\nu>0$ depending only on $\tilde{u}_0$ and the dimension $n$. In this case, let $\varepsilon_f = 0$, so that $\eta \equiv 0$, and $\operatorname{IV} = 0$. Then, since
    \begin{align*}
        \mathscr{F}_{\mathbb{S}^{n-1}}(\varphi_u^{\perp}\,;u_0)&=\mathscr{F}_{\mathbb{S}^{n-1}}(\varphi_u^{\perp}\,;u_0)-\mathscr{F}_{\mathbb{S}^{n-1}}(0\,;u_0)\\
        &=\nabla\mathscr{F}_{\mathbb{S}^{n-1}}(0\,;u_0)[\varphi_u^{\perp}]+\nabla^2\mathscr{F}_{\mathbb{S}^{n-1}}(0\,;u_0)[\varphi_u^{\perp},\varphi_u^{\perp}]\\
        &=\nabla^2\mathscr{F}_{\mathbb{S}^{n-1}}(\xi\,;u_0)[\varphi_u^{\perp},\varphi_u^{\perp}],
    \end{align*}
    for some $\xi\in\mathcal{B}_{\delta_0}(0)$, by choosing $\delta_0>0$ sufficiently small we have
    \begin{align*}
        \lvert\mathscr{F}_{\mathbb{S}^{n-1}}(\varphi_u^{\perp}\,;u_0)\rvert&\le(\min_{\lambda_j > 0} \lambda_j +\max_{\lambda_j < 0} \lambda_j\Big)\|\varphi_u^{\perp}\|_{W^{1,2}(\mathbb{S}^{n-1})}^2\\
        &\le\frac{(n-2)C_{u_0}}{4}\|\varphi_u^{\perp}\|_{W^{1,2}(\mathbb{S}^{n-1})}^2.
    \end{align*}
    Then, we get
    \begin{align*}
        \lvert\mathscr{E}_{\mathbb{B}^n}(\tilde u\,;\tilde u_0)\rvert\le\frac{C_{u_0}}{2}\|\varphi_u^{\perp}\|_{W^{1,2}(\mathbb{S}^{n-1})}^2\le 1,
    \end{align*}
    where the last inequality follows by possibly choosing $\delta_0>0$ even smaller depending just on $u_0$. In particular, from 
    \eqref{equation: starting estimate}, \eqref{equation: estimate for II} and   \eqref{equation: final integral estimate for III}, we deduce 
    \begin{align*}
        \mathscr{E}_{\mathbb{B}^n}(\hat u\,;\tilde u_0)- \mathscr{E}_{\mathbb{B}^n}(\tilde u\,;\tilde u_0) &\leq - C_{u_0} \varepsilon \|\varphi_u^{\perp}\|_{W^{1,2}(\mathbb{S}^{n-1})}^2 + \hat C\varepsilon^2\|\varphi_{u}^{\perp}\|_{W^{1,2}(\mathbb{S}^{n-1})}^2\\
        & \leq -  \varepsilon (C_{u_0}- \hat C\varepsilon)\Vert \varphi_{u}^{\perp} \Vert_{W^{1, 2}(\mathbb{S}^{n - 1})}^2\\
        &\le - \varepsilon\,\frac{2(C_{u_0}- \hat C\varepsilon)}{C_{u_0}}\lvert\mathscr{E}_{\mathbb{B}^n}(\tilde u\,;\tilde u_0)\rvert,\\ 
        &\le - \varepsilon\,\lvert\mathscr{E}_{\mathbb{B}^n}(\tilde u\,;\tilde u_0)\rvert\\
        &\le - \varepsilon\,\lvert\mathscr{E}_{\mathbb{B}^n}(\tilde u\,;\tilde u_0)\rvert^{1+\gamma}
    \end{align*}
    for every $\gamma>0$, where the second to last inequality follows by choosing $\varepsilon$ appropriately so that 
    \begin{align*}
        \frac{2(C_{u_0}- \hat C\varepsilon)}{C_{u_0}}\ge 1 \quad\Leftrightarrow\quad \eps\le\frac{3C_{u_0}}{\hat C}
    \end{align*}
    and the last inequality follows because $\lvert\mathscr{E}_{\mathbb{B}^n}(\tilde u\,;\tilde u_0)\rvert\le 1$.
    \item Otherwise, we set $\varepsilon = \varepsilon_f\lvert f(b)\rvert^{1 - 2\gamma}$ for some $\varepsilon_f$ sufficiently small depending only on $n$ and $u_0$, allowing us to estimate $\operatorname{IV}$ as follows: 
    \begin{align*}
       \int_{0}^{1} \operatorname{IV} \rho^{n - 3} \; d\rho & \leq - f(b)^{1 - \gamma} \int_{0}^{1} \eta(\rho) \rho^{n - 3} \; d\rho \\
         & = - \varepsilon_f f(b)^{2 - 2 \gamma}C\int_{0}^{1} \sqrt{n}(1 - \rho)\rho^{n - 3} \; d\rho \\
         & \leq - \varepsilon_f f(b)^{2 - 2 \gamma}, 
    \end{align*}
    where the last inequality follows by choosing $C>0$ big enough depending only on $n$. Then, from this inequality, combined with \eqref{equation: starting estimate}, \eqref{equation: estimate for II} and \eqref{equation: final integral estimate for III} we infer
    \allowdisplaybreaks
    \begin{align*}
        \mathscr{E}_{\mathbb{B}^n}&(\hat u\,;\tilde u_0)-\mathscr{E}_{\mathbb{B}^n}(\tilde u\,;\tilde u_0) \\
         & \leq - C_{u_0} \varepsilon \|\varphi_u^{\perp}\|_{W^{1,2}(\mathbb{S}^{n-1})}^2 - \varepsilon_f f(b)^{2 - 2 \gamma} + \hat C\big(\varepsilon_f^2f(b)^{2-2\gamma}+\varepsilon^2\|\varphi_{u}^{\perp}\|_{W^{1,2}(\mathbb{S}^{n-1})}^2\big) \\
         & \leq - (C_{u_0} - \hat C\varepsilon)\eps\|\varphi_u^{\perp}\|_{W^{1,2}(\mathbb{S}^{n-1})}^2 - (\varepsilon_f - \hat C \varepsilon_{f}^{2}) f(b)^{2 - 2 \gamma}\\
         &\le -\bigg(\frac{4(C_{u_0}-\hat C\eps)}{(n-2)C_{u_0}}-1+\hat C\eps_f\bigg)\eps_f f(b)^{2-2\gamma}\\
         &\le -\bigg(\frac{2(C_{u_0}-\hat C\eps)}{C_{u_0}}-\frac{n-2}{2}+\frac{n-2}{2}\hat C\eps_f\bigg)\eps_f\lvert\mathscr{E}_{\mathbb{B}^n}(\tilde u\,;\tilde u_0)\rvert^{2-2\gamma}\\
         &\le -\bigg(\frac{2(C_{u_0}-\hat C\eps)}{C_{u_0}}-\frac{n-2}{2}+\frac{n-2}{2}\hat C\eps_f\bigg)\eps_f\lvert\mathscr{E}_{\mathbb{B}^n}(\tilde u\,;\tilde u_0)\rvert^{1+\tilde\gamma},
    \end{align*}
    where in the last inequality we have defined $\tilde\gamma:=1-2\gamma\in[0,1)$. Again, by choosing $\eps_f$ (and hence $\eps$) small enough so that 
    \begin{align*}
        \frac{2(C_{u_0}-\hat C\eps)}{C_{u_0}}-\frac{n-2}{2}+\frac{n-2}{2}\hat C\eps_f\ge 1,
    \end{align*}
    we get that
    \begin{align*}
        \mathscr{E}_{\mathbb{B}^n}&(\hat u\,;\tilde u_0)-\mathscr{E}_{\mathbb{B}^n}(\tilde u\,;\tilde u_0)\le -\eps_f\lvert\mathscr{E}_{\mathbb{B}^n}(\tilde u\,;\tilde u_0)\rvert^{1+\tilde\gamma}
    \end{align*}
    and the statement follows. 
\end{enumerate}

\subsection{The integrable case} \label{subsec: integrable case}
We now specialise the proof of Theorem \ref{Theorem: (log)-epiperimetric inequality for harmonic maps} to the case of an integrable kernel (of the second variation). We start by recalling from \cite{AdamsSimon} this notion. We will say that $K:=\ker\nabla^2\mathscr{F}_{\mathbb{S}^{n-1}}(0\,;u_0)$ is integrable if for every $v \in K$, there exists a family $\{u_s\}_{s \in (0, 1)} \subset C^{\infty}(u_0^*TN)$ with $u_s \rightarrow 0$ in $C^{\infty}(u_0^*TN)$, such that $ \nabla\mathscr{F}_{M}(u_s\,;u_0) = 0$ for every $s \in (0, 1)$, and $\lim_{s \rightarrow 0} u_s/s = v$ in the $L^2(M)$ sense. In this setting, analyticity of $f$ defined above implies the following lemma, whose proof can be found in \cite[Lemma 1]{AdamsSimon}, or \cite[Lemma 2.3]{EngelsteinSpolaorVelichkov}. 
\begin{lemma}
    The integrability condition hold for $\ker\nabla^2\mathscr{F}_{\mathbb{S}^{n-1}}(0\,;u_0)$ if and only if $f \equiv f(0) = 0$ in a neighborhood of $0$. 
\end{lemma}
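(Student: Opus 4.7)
The plan is to translate the integrability condition---originally a statement about critical points of the infinite-dimensional functional $\mathscr{F}:=\mathscr{F}_{\mathbb{S}^{n-1}}(\,\cdot\,;u_0)$ near the zero section---into a finite-dimensional statement about zeros of $\nabla f$, where $f$ is the real analytic reduced functional introduced in \eqref{def: f}, and then to reason directly on $\R^\ell$. The bridge is provided by Lemma~\ref{Lyapunov-Schmidt reduction}: any critical point $w\in C^{2,\alpha}(u_0^*TN)$ of $\mathscr{F}$ sufficiently close to $0$ must have the form $w=\varphi+F(\varphi)$ with $\varphi=P_Kw\in K$, because the equation $P_{K^\perp}\nabla\mathscr{F}(w)=0$ is solved uniquely by the implicit function theorem; and then part~(iii) of the same lemma says that the residual equation $P_K\nabla\mathscr{F}(w)=0$ is equivalent to $\nabla f(x)=0$, where $x\in\R^\ell$ collects the coordinates of $\varphi$ in the basis $\{\phi_1,\dots,\phi_\ell\}$. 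A direct computation using $F(0)=0$, $\nabla F(0)=0$, and $\phi_j\in K$ yields $f(0)=0$, $\nabla f(0)=0$, and $\nabla^2 f(0)=0$, so $f$ vanishes to order at least three at the origin.

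For the implication $(f\equiv 0$ near $0)\Rightarrow$ integrability, I would argue by an explicit construction: given $v=\sum v_j\phi_j\in K$, simply set
\begin{align*}
    u_s:=s\sum_{j=1}^\ell v_j\phi_j+F\bigg(s\sum_{j=1}^\ell v_j\phi_j\bigg)
\end{align*}
for small $s>0$. Then $P_{K^\perp}\nabla\mathscr{F}(u_s)=0$ automatically by Lyapunov--Schmidt, while $P_K\nabla\mathscr{F}(u_s)=\nabla f(sv)=0$ by hypothesis; finally $u_s/s=v+F(sv)/s\to v$ in $L^2$ since $\nabla F(0)=0$, and the smoothness/$C^\infty$-convergence required of the family follows from Lemma~\ref{Lyapunov-Schmidt reduction}-(iv) combined with elliptic bootstrapping.

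The main obstacle is the converse direction. Assume for a contradiction that $f\not\equiv 0$ in any neighbourhood of the origin. Since $f$ is real analytic and vanishes to order at least three at $0$, we may expand $f(x)=P_k(x)+O(|x|^{k+1})$ for some integer $k\geq 3$ and some nonzero homogeneous polynomial $P_k$ of degree $k$. Pick any $v\in K\setminus\{0\}$, let $\{u_s\}$ be an integrability family for $v$, and use the bridge established above to write $u_s=\varphi_s+F(\varphi_s)$ with $\varphi_s:=P_Ku_s$ a critical point of $f$; the $L^2$ convergence $u_s/s\to v$, together with $\|F(\varphi_s)\|_{L^2}=o(\|\varphi_s\|_{L^2})$, forces $\varphi_s/s\to v$ in $\R^\ell$. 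A Taylor expansion of $\nabla f$ at $0$ now gives
\begin{align*}
    0=\nabla f(\varphi_s)=s^{k-1}\nabla P_k(v)+o(s^{k-1}),
\end{align*}
so that $\nabla P_k(v)=0$; since $v\in\R^\ell$ was arbitrary, $\nabla P_k\equiv 0$, forcing $P_k\equiv 0$ and contradicting the choice of $k$. The subtle point is verifying that each $u_s$ truly sits on the Lyapunov--Schmidt graph $\{\varphi+F(\varphi):\varphi\in U\}$; this requires invoking the uniqueness clause in the implicit function theorem along the sequence $u_s$, which is legitimate because $u_s\to 0$ in $C^{2,\alpha}$ and hence eventually lies in the region where Lyapunov--Schmidt applies.
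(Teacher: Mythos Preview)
The paper does not give its own proof of this lemma: it simply cites \cite[Lemma 1]{AdamsSimon} and \cite[Lemma 2.3]{EngelsteinSpolaorVelichkov}. Your argument is correct and is essentially the classical Adams--Simon proof that those references contain. The bridge you set up via Lyapunov--Schmidt---identifying critical points of $\mathscr{F}$ near $0$ with critical points of the reduced function $f$ on $\R^\ell$---is exactly the right mechanism, and the uniqueness clause in the implicit function theorem that you flag is indeed what guarantees each $u_s$ lies on the graph $\{\varphi+F(\varphi)\}$.

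Two small remarks. First, in the converse direction you can streamline the passage $\varphi_s/s\to v$: since $P_K$ is a bounded $L^2$-projection and $v\in K$, one has directly $\varphi_s/s=P_K(u_s/s)\to P_Kv=v$, without needing the estimate on $F(\varphi_s)$. Second, in the forward direction the definition asks for a family indexed by $s\in(0,1)$ with $u_s\to 0$ in $C^\infty$; your construction $u_s=sv+F(sv)$ only makes sense for $s$ small, but this is harmless (reparametrize, or freeze the family for $s$ away from $0$), and the $C^\infty$-convergence follows by Schauder bootstrapping since each $u_s$ corresponds to a smooth harmonic map close to $u_0$.
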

Consequently, it is immediate from this lemma that in the proof of the log-epiperimetric inequality we can take $\gamma = 0$, thus obtaining an epiperimetric inequality. 
\section{Proof of the uniqueness of tangent maps with isolated singularities (Theorem \ref{Theorem: uniqueness of tangent maps with isolated singularities})} \label{sec: proof of uniqueness}
\allowdisplaybreaks
As by the statement of Theorem \ref{Theorem: uniqueness of tangent maps with isolated singularities}, consider an energy minimizing harmonic map $u\in W^{1,2}(\Omega,N)$ and let $y\in\operatorname{Sing}(u)$. Let $r>0$ be such that $B_r(y)\subset\Omega$. For every $\rho\in(0,r/2)$, define $u_{y,\rho}\in W^{1,2}(B_2(0),N)$ as
\begin{align*}
    u_{y,\rho}(x):=u(\rho x+y) \qquad\mbox{ for } \mathcal{L}^n\mbox{-.a.e. } x\in B_2(0).
\end{align*}
First, we show the following lemma that will be used to analyze the rescalings $u_{y,\rho}$ at comparable scales.
\begin{lemma}\label{Lemma: blow-ups at comparable scales}
    For every $\varepsilon_0>0$ there exists $\delta_0>0$ such that for every $r\in(0,\delta_0)$ and every $\rho\in[r/2,r]$ we have 
    \begin{align*}
        \int_{B_{\frac{3}{2}}(0)\smallsetminus B_{\frac{3}{4}}(0)}\lvert u_{y,\rho}-u_{y,r}\rvert^2\, d\mathcal{L}^{n}<\varepsilon_0.
    \end{align*}
    \begin{proof}
        We argue by contradiction and we claim that there exist $\varepsilon_0>0$ and sequences $\{r_n\}_{n\in\N}$ and $\{\rho_n\}_{n\in\N}$ with $r_n\to 0$ as $n\to+\infty$ and $\rho_n\in[r_n/2,r_n]$ such that
        \begin{align}\label{Equation: contradiction}
            \int_{B_{\frac{3}{2}}(0)\smallsetminus B_{\frac{3}{4}}(0)}\lvert u_{y,\rho_n}-u_{y,r_n}\rvert^2\, d\mathcal{L}^{n}\ge\varepsilon_0.
        \end{align}
        Since 
        \begin{align*}
            1\le\frac{r_n}{\rho_n}\le 2
        \end{align*}
        there exists a subsequence (not relabeled) such that
        \begin{align*}
            0<\ell:=\lim_{n\to+\infty}\frac{r_n}{\rho_n}<+\infty.
        \end{align*}
        By the standard monotonicity formula for energy minimizing harmonic maps we know that, up to a subsequence (not relabelled) $u_{y,r_n}\rightharpoonup\varphi$ weakly in $W^{1,2}$ and strongly in $L^2$ for some $0$-homogeneous tangent map $\varphi\in W^{1,2}(B_2(0),N)$. Let $\Psi_{\ell}:\R^n\to\R^n$ be given by
        \begin{align*}
            \Psi_{\ell}(x):=\ell x \qquad\forall\,x\in\R^n
        \end{align*}
        and notice that
        \begin{align*}
            \lim_{n\to+\infty}\int_{B_{\frac{3}{2}}(0)\smallsetminus B_{\frac{3}{4}}(0)}\lvert u_{y,\rho_n}-\varphi\rvert^2\, d\mathcal{L}^n&=\lim_{n\to+\infty}\int_{B_{\frac{3}{2}}(0)\smallsetminus B_{\frac{3}{4}}(0)}\lvert(\Psi_{\ell}^{-1})^*(u_{y,\ell\rho_n}-\Psi_{\ell}^{*}\varphi)\rvert^2\, d\mathcal{L}^n\\
            &\le C\lim_{n\to+\infty}\int_{B_{\frac{3}{2}}(0)\smallsetminus B_{\frac{3}{4}}(0)}\lvert u_{y,r_n}-\varphi\rvert^2\, d\mathcal{L}^n=0.
        \end{align*}
        By then, by triangle inequality we get 
        \begin{align*}
            &\lim_{n\to+\infty}\int_{B_{\frac{3}{2}}(0)\smallsetminus B_{\frac{3}{4}}(0)}\lvert u_{y,\rho_n}-u_{u,r_n}\rvert^2\, d\mathcal{L}^{n}\\
            &\le 2\lim_{n\to+\infty}\bigg(\int_{B_{\frac{3}{2}}(0)\smallsetminus B_{\frac{3}{4}}(0)}\lvert u_{y,\rho_n}-\varphi\rvert^2\, d\mathcal{L}^{n}+\int_{B_{\frac{3}{2}}(0)\smallsetminus B_{\frac{3}{4}}(0)}\lvert u_{y,r_n}-\varphi\rvert^2\, d\mathcal{L}^{n}\bigg)=0
        \end{align*}
        which contradicts \eqref{Equation: contradiction}. The statement follows. 
    \end{proof}
\end{lemma}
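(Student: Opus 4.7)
The plan is to argue by contradiction and exploit the $0$-homogeneity of tangent maps of energy minimizing harmonic maps. Suppose the claim fails: there exist $\varepsilon_0 > 0$, a sequence $r_n \downarrow 0$, and $\rho_n \in [r_n/2, r_n]$ with
\begin{equation*}
    \int_{B_{3/2}(0)\smallsetminus B_{3/4}(0)} \lvert u_{y,\rho_n} - u_{y,r_n} \rvert^2 \, d\mathcal{L}^n \geq \varepsilon_0.
\end{equation*}
Since $r_n/\rho_n \in [1,2]$, after passing to a subsequence (not relabelled) we can assume $r_n/\rho_n \to \ell \in [1,2]$.

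Next, I would invoke the standard compactness theory for energy minimizing harmonic maps at a singular point, based on the monotonicity of the scale-invariant Dirichlet energy. This yields (along a further subsequence) a $0$-homogeneous tangent map $\varphi \in W^{1,2}_{\mathrm{loc}}(\mathbb{R}^n, N)$ such that $u_{y, r_n} \to \varphi$ strongly in $L^2$ on any compact subset of $B_2(0) \setminus \{0\}$. The strong $L^2$ convergence on an annulus that is bounded away from the origin is crucial and follows, for instance, from the weak $W^{1,2}$ convergence combined with Rellich's theorem.

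The key reduction is then the identity $u_{y, \rho_n}(x) = u_{y, r_n}\big((\rho_n/r_n)\, x\big)$. I would use this to rewrite
\begin{equation*}
    \int_{B_{3/2}(0)\smallsetminus B_{3/4}(0)} \lvert u_{y,\rho_n}(x) - \varphi(x) \rvert^2 \, d\mathcal{L}^n(x)
\end{equation*}
via a change of variables $y = (\rho_n/r_n)x$, noting that $\rho_n/r_n \in [1/2, 1]$ sends the annulus $B_{3/2} \setminus B_{3/4}$ into a fixed annulus $B_{3/2} \setminus B_{3/8}$ bounded away from the origin, and using the $0$-homogeneity $\varphi((\rho_n/r_n)x) = \varphi(x)$ to conclude that $u_{y,\rho_n} \to \varphi$ strongly in $L^2(B_{3/2} \setminus B_{3/4})$.

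Finally, the triangle inequality yields
\begin{equation*}
    \int_{B_{3/2}\smallsetminus B_{3/4}} \lvert u_{y,\rho_n} - u_{y,r_n} \rvert^2 \, d\mathcal{L}^n \leq 2\int_{B_{3/2}\smallsetminus B_{3/4}} \lvert u_{y,\rho_n} - \varphi \rvert^2 \, d\mathcal{L}^n + 2\int_{B_{3/2}\smallsetminus B_{3/4}} \lvert u_{y,r_n} - \varphi \rvert^2 \, d\mathcal{L}^n \longrightarrow 0,
\end{equation*}
contradicting the lower bound $\varepsilon_0$. The main (and only genuinely delicate) step is the change of variables argument: one must verify that the dilation by $\rho_n/r_n$ keeps the image inside a fixed compact annulus away from the origin, where the $L^2$ convergence of $u_{y,r_n}$ to $\varphi$ is available. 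The role of $0$-homogeneity is to absorb the dilation on the $\varphi$ side for free.
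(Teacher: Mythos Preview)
Your proof is correct and follows essentially the same contradiction--compactness--triangle inequality strategy as the paper. In fact, your change of variables using the exact ratio $\rho_n/r_n$ (rather than its limit $\ell$) is slightly cleaner, since it directly lands you in the fixed annulus $B_{3/2}\smallsetminus B_{3/8}$ where the $L^2$ convergence of $u_{y,r_n}$ to $\varphi$ is already known, avoiding any implicit appeal to $\ell\rho_n\sim r_n$.
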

Let $\varphi\in W^{1,2}(B_2(0),N)$ be any tangent map for $u$ at $y$ such that $\varphi\in C^{\infty}(B_2(0)\smallsetminus\{0\},N)$. Let $\{\rho_i\}_{i\in\N}$ be such that $u_{y,\rho_i}\rightharpoonup\varphi$ weakly in $W^{1,2}$. Fix any $\eta>0$. By Rellich-Kondrachov theorem, we get that $u_{y,\rho_i}\to\varphi$ strongly in $L^2$. Thus, there exists $i_1 \in\N$ such that for every $i\ge i_1$ we have
\begin{align*}
    \int_{B_{\frac{3}{2}}(0)\smallsetminus B_{\frac{3}{4}}(0)}\lvert u_{y,\rho_i}-\varphi\rvert^2\, d\mathcal{L}^{n}<\eta.
\end{align*}
Let $\delta_0>0$ be the constant given by Lemma \ref{Lemma: blow-ups at comparable scales} with $\varepsilon_0=\eta$ and let $i_2\in\N$ be such that $\rho_i<\delta_0$ for every $i\ge i_2$. Let $\tilde i:=\max\{i_1,i_2\}$ and let $\tilde\rho:=\rho_{\tilde i}$. Fix any $n\in\N$. Then, by Lemma \ref{Lemma: blow-ups at comparable scales}, for every $\rho\in[\tilde\rho/2^{n+1},\tilde\rho/2^n]$ we have 
\begin{align*}
    \int_{B_{\frac{3}{2}}(0)\smallsetminus B_{\frac{3}{4}}(0)}\lvert u_{y,\rho}-\varphi\rvert^2\, d\mathcal{L}^n&\le 2\Bigg(\int_{B_{\frac{3}{2}}(0)\smallsetminus B_{\frac{3}{4}}(0)}\lvert u_{y,\rho}-u_{y,\frac{\tilde\rho}{2^n}}\rvert^2\, d\mathcal{L}^n\\
    &\quad+\int_{B_{\frac{3}{2}}(0)\smallsetminus B_{\frac{3}{4}}(0)}\lvert u_{y,\frac{\tilde\rho}{2^n}}-\varphi\rvert^2\, d\mathcal{L}^n\Bigg)\\
    &\le 4\eta.
\end{align*}
Notice that $u_{y,\rho}-\varphi$ is an energy minimizing harmonic map and let $\eps_0>0$ be the constant given by the $\eps$-regularity theorem for energy minimizing harmonic maps, cf. \cite{schoen-uhlenbeck}. By choosing $\eta>0$ small enough depending just on $\eps_0$ and on $\varphi$ we get that $u_{y,\rho}\in C^{\infty}(B_{\frac{5}{4}}(0)\smallsetminus B_{\frac{7}{8}}(0),N)$ and 
\begin{align*}
    \|u_{y,\rho}-\varphi\|_{C^{2,\alpha}(\mathbb{S}^{n-1})}\le \|u_{y,\rho}-\varphi\|_{C^{3}\big(B_{\frac{5}{4}}(0)\smallsetminus B_{\frac{7}{8}}(0)\big)}<C\eta.
\end{align*}
Let $\varepsilon,\delta>0$ and $\gamma\in[0,1)$ be the constants given by Theorem \ref{Theorem: (log)-epiperimetric inequality for harmonic maps} for $u_0=\varphi$. By again reducing the size of $\eta>0$ we can make sure that $C\eta<\delta$, so that 
\begin{align*}
    \|u_{y,\rho}-\varphi\|_{C^{2,\alpha}(\mathbb{S}^{n-1})}\le C\eta<\delta.
\end{align*}
Thus, by Theorem \ref{Theorem: (log)-epiperimetric inequality for harmonic maps}, there exists $\hat u_{\rho}\in W^{1,2}(\mathbb{B}^n,N)$ such that $\hat u_{\rho}|_{\mathbb{S}^{n-1}}=u_{y,\rho}|_{\mathbb{S}^{n-1}}$ and 
\begin{align*}
    \mathscr{E}_{\mathbb{B}^n}(\hat u_{\rho}\,;\varphi)\le\big(1-\varepsilon\lvert\mathscr{E}_{\mathbb{B}^n}(\tilde u_{\rho}\,;\varphi)\rvert^{\gamma}\big)\mathscr{E}_{\mathbb{B}^n}(\tilde u_{\rho}\,;\varphi),
\end{align*}
where $\tilde u_{\rho}\in W^{1,2}(\mathbb{B}^n,N)$ is the $0$-homogeneous extension of $u_{y,\rho}$ inside $\mathbb{B}^n$. Notice that, since $u_{y,\rho}$ is energy minimizing, we have 
\begin{align}\label{Equation: estimate epiperimetric}
\begin{split}
    \Theta(\rho,y;u)&-\Theta(y;u)=\mathscr{D}_{\mathbb{B}^n}(u_{y,\rho})-\Theta(y;u)\\
    &=\mathscr{D}_{\mathbb{B}^n}(u_{y,\rho})-\mathscr{D}_{\mathbb{B}^n}(\varphi)\\
    &\le\mathscr{D}_{\mathbb{B}^n}(\hat u_{\rho})-\mathscr{D}_{\mathbb{B}^n}(\varphi)\\
    &=\mathscr{E}_{\mathbb{B}^n}(\hat u_{\rho}\,;\varphi) \le \big(1-\varepsilon\lvert\mathscr{E}_{\mathbb{B}^n}(\tilde u_{\rho}\,;\varphi)\rvert^{\gamma}\big)\mathscr{E}_{\mathbb{B}^n}(\tilde u_{\rho}\,;\varphi) \qquad\forall\,\rho\in(\tilde\rho/2^{n+1},\tilde\rho/2^n)
\end{split}
\end{align}
Let
\begin{align*}
    f(\rho):=\rho^{n-2}\Theta(\rho,y;u)-\Theta(y;u)\rho^{n-2}=\int_{B_{\rho}(y)}\lvert du\rvert^2\, d\mathcal{L}^n-\Theta(y;u)\rho^{n-2} \qquad\forall\,\rho\in[0,1).
\end{align*}
Notice that by the monotonicity formula for energy minimizing harmonic maps we have that $[0,1)\ni\rho\mapsto f(\rho)$ is an non-decreasing function of $\rho$. Hence, $f$ is differentiable $\mathcal{L}^1$-a.e. and its distributional derivative is a measure whose absolutely continous part (with respect to $\mathcal{L}^1$) coincides $\mathcal{L}^1$-a.e. with the classical differential and whose singular part is non negative. Thus, we have
\begin{align*}
    f'(\rho)&\ge\int_{\partial B_{\rho}(y)}\lvert du\rvert^2\, d\mathscr{H}^{n-1}-(n-2)\Theta(y;u)\rho^{n-3}\\
    &=\rho^{n-1}\int_{\mathbb{S}^{n-1}}\lvert du(\rho x+y)\rvert^2\, d\mathscr{H}^{n-1}(x)-(n-2)\Theta(y;u)\rho^{n-3}\\
    &=\rho^{n-3}\Bigg(\int_{\mathbb{S}^{n-1}}\lvert \rho du(\rho x+y)\rvert^2\, d\mathscr{H}^{n-1}(x)-(n-2)\Theta(y;u)\Bigg)\\
    &=\rho^{n-3}\Bigg(\int_{\mathbb{S}^{n-1}}\lvert du_{y,\rho}\rvert^2\, d\mathscr{H}^{n-1}(x)-(n-2)\Theta(y;u)\Bigg)\\
    &=\rho^{n-3}(n-2)\big(\mathscr{D}_{\mathbb{B}^n}(\tilde u_{\rho})-\mathscr{D}_{\mathbb{B}^n}(\varphi)\big)\\
    &=\rho^{n-3}(n-2)\mathscr{E}_{\mathbb{B}^n}(\tilde u_{\rho}\,;\varphi) \qquad\mbox{ for } \mathcal{L}^1\mbox{-a.e. } \rho\in(0,1),
\end{align*}
which can be rewritten as
\begin{align}\label{Equation: estimate epiperimetric 2}
    \rho^{n-2}\mathscr{E}_{\mathbb{B}^n}(\tilde u_{\rho}\,;\varphi)\le\frac{\rho}{n-2}f'(\rho) \qquad\mbox{ for } \mathcal{L}^1\mbox{-a.e. } \rho\in(0,1).
\end{align}
By plugging \eqref{Equation: estimate epiperimetric 2} in \eqref{Equation: estimate epiperimetric} we get
\begin{align}\label{Equation: epiperimetric 3}
\begin{split}
    f(\rho)&=\rho^{n-2}\Theta(\rho,y;u)-\Theta(y;u)\rho^{n-2}\\
    &\le\big(1-\varepsilon\lvert\mathscr{E}_{\mathbb{B}^n}(\tilde u_{\rho}\,;\varphi)\rvert^{\gamma}\big)\rho^{n-2}\mathscr{E}_{\mathbb{B}^n}(\tilde u_{\rho}\,;\varphi)\\
    &\le\big(1-\varepsilon\lvert\mathscr{E}_{\mathbb{B}^n}(\tilde u_{\rho}\,;\varphi)\rvert^{\gamma}\big)\frac{\rho}{n-2}f'(\rho), \qquad\mbox{ for } \mathcal{L}^1\mbox{-a.e. } \rho\in(\tilde\rho/2^{n+1},\tilde\rho/2^n).
\end{split}
\end{align}
Moreover, since $u_{y,\rho}$ is energy minimizing, we have 
\begin{align}\label{Equation: epiperimetric 4}
\begin{split}
    e(\rho)&:=\Theta(\rho,y;u)-\Theta(y;u)=\frac{1}{\rho^{n-2}}\int_{B_{\rho}(y)}\lvert du\rvert^2\, d\mathcal{L}^n-\Theta(y;u)=\mathscr{D}_{\mathbb{B}^n}(u_{y,\rho})-\Theta(y;u)\\
    &\le\mathscr{D}_{\mathbb{B}^n}(\tilde u_{\rho})-\Theta(y;u)=\mathscr{D}_{\mathbb{B}^n}(\tilde u_{\rho})-\mathscr{D}_{\mathbb{B}^n}(\varphi)=\mathscr{E}_{\mathbb{B}^{n}}(\tilde u_{\rho}\,;\varphi).
\end{split}
\end{align}
Hence, by combining \eqref{Equation: epiperimetric 3} and \eqref{Equation: epiperimetric 4} we get
\begin{align*}
    f(\rho)\le\big(1-\varepsilon\lvert e(\rho)\rvert^{\gamma}\big)\frac{\rho}{n-2}f'(\rho), \qquad\mbox{ for } \mathcal{L}^1\mbox{-a.e. } \rho\in(\tilde\rho/2^{n+1},\tilde\rho/2^n).
\end{align*}
Arguing as in \cite[Section 3.2, Step 1]{EngelsteinSpolaorVelichkov} we get
\begin{align*}
    e(\rho)\le 2\bigg(-(n-2)\varepsilon\gamma\log\bigg(\frac{\rho}{\tilde\rho}\bigg)\bigg)^{-\frac{1}{\gamma}} \qquad\forall\,\rho\in[\tilde\rho/2^{n+1},\tilde\rho/2^n].
\end{align*}
Since we have chosen $n\in\N$ arbitrarily and for every $\rho \in (0,\tilde\rho)$ there exists $n\in\N$ such that $\rho\in[\tilde\rho/2^{n+1},\tilde\rho/2^n]$, we have established that
\begin{align}\label{Equation: logarithmic decay of the energy density}
    \Theta(\rho,y;u)-\Theta(y;u)=e(\rho)\le 2\bigg(\hspace{-1.5mm}-(n-2)\varepsilon\gamma\log\bigg(\frac{\rho}{\tilde\rho}\bigg)\bigg)^{-\frac{1}{\gamma}} \qquad\forall\,\rho\in(0,\tilde\rho).
\end{align}
The uniqueness of tangent map to $u$ at $y$ then follows directly by Proposition \ref{Proposition: Dini decrease implies uniqueness of tangent map} with $\rho_0:=\tilde\rho/2$ and
\begin{align*}
    \phi(\rho):=2\bigg(\hspace{-1.5mm}-(n-2)\varepsilon\gamma\log\bigg(\frac{\rho}{\tilde\rho}\bigg)\bigg)^{-\frac{1}{\gamma}} \qquad\forall\,\rho\in(0,\tilde\rho/2).
\end{align*}

\section{Proof of the uniqueness of tangent maps at infinity (Theorem \ref{thm: uniqueness of blow-downs})}\label{sec: uniqueness of tangents at infinity} 

In this section, we aim to prove the uniqueness of the blow-down maps, i.e. uniqueness of the tangent map at infinity. We start by recalling what these objects are. 
\begin{definition}[Tangent maps at infinity]\label{Definition: tangent maps at infinity}
    Let $N\subset\R^k$ be a closed smooth submanifold in $\R^k$ and let $n\ge 3$. Let $u\in W_{loc}^{1,2}(\R^n,N)$. For every $\rho\in[1,+\infty)$, define $u_{\rho}\in W^{1,2}(B_2(0),N)$ as
    \begin{align*}
        u_{\rho}(x):=u_{0,\rho}(x)=u(\rho x) \qquad\mbox{ for } \mathcal{L}^n\mbox{-.a.e. } x\in B_2(0).
    \end{align*}
    If there exist a sequence $\{\rho_i\}_{i\in\N}\subset[1,+\infty)$ and a map $\varphi\in W^{1,2}(B_2(0),N)$ such that $\rho_i\to+\infty$ and $u_{\rho_i}\rightharpoonup\varphi$ weakly in $W^{1,2}(B_2(0),N)$, then we say that $\varphi$ is a \textit{blow-down} of $u$, or a \textit{tangent map for $u$ at infinity}.
\end{definition}
\begin{remark}[Existence of blow downs]\label{Remark: blow-downs of u}
    Let $u\in W_{loc}^{1,2}(\R^n,N)$ and assume the following growth condition on the Dirichlet energy of $u$ at infinity: there exists $\Lambda>0$ such that
    \begin{align}\label{Equation: growth at infinity}
        \int_{B_{\rho}(0)}\lvert du\rvert^2\, d\mathcal{L}^n\le\Lambda\rho^{n-2} \qquad\forall\,\rho\in (0,+\infty). 
    \end{align}
    This implies that the Dirichlet energy of the blow-downs $\{u_{\rho}\}_{\rho\in[1,+\infty)}$ of $u$ is uniformly bounded. Indeed, we have
    \begin{align*}
        \mathscr{D}_{\mathbb{B}^n}(u_{\rho})=\frac{1}{\rho^{n-2}}\int_{B_{\rho}(0)}\lvert du\rvert^2\, d\mathcal{L}^n\le\Lambda<+\infty.
    \end{align*}
    Hence, by standard Sobolev weak compactness, there exist a sequence $\{\rho_i\}_{i\in\N}\subset[1,+\infty)$ and a map $\varphi\in W^{1,2}(B_2(0),N)$ such that $\rho_i\to+\infty$ and $u_{\rho_i}\rightharpoonup\varphi$ weakly in $W^{1,2}(B_2(0),N)$. Hence, $u$ admits at least a tangent map at infinity.
\end{remark}
Now, we aim to provide a proof of Theorem \ref{thm: uniqueness of blow-downs}, i.e. to show that if $u$ is an energy minimizing harmonic map satisfying \eqref{Equation: growth at infinity}, then $u$ has a unique tangent map at infinity. 

Note that, because of the monotonicity formula for energy minimizing harmonic maps, the function 
    \begin{align*}
        [1,+\infty)\ni\rho\mapsto\Theta(\rho\,;u):=\mathscr{D}_{\mathbb{B}^n}(u_{\rho})=\frac{1}{\rho^{n-2}}\int_{B_{\rho}(0)}\lvert du\rvert^2\, d\mathcal{L}^n
    \end{align*}
    is a non-decreasing function on $[1,+\infty)$. By Remark \ref{Remark: blow-downs of u}, the function $\Theta(\,\cdot\,;u)$ is also uniformly bounded on $[1,+\infty)$ by the constant $\Lambda$. Hence,
    \begin{align*}
        \lim_{\rho\to+\infty}\Theta(\rho\,;u)        
    \end{align*}
    exists and is a finite non-negative real number, which we denote by $\Theta(u)$.\\
    Let $\varphi\in W^{1,2}(B_{2}(0)\,;N)$ be any tangent map to $u$ at infinity and let $\{\rho_i\}_{i\in\N}\subset[1,+\infty)$ be such that $\rho_i\to+\infty$ and $u_{\rho_i}\rightharpoonup\varphi$ weakly in $W^{1,2}(B_2(0),N)$. Note that, since $u$ is energy minimizing, subsequentially we have $u_{\rho}\to\varphi$ strongly in $W^{1,2}(\mathbb{B}^n,N)$ (see e.g. \cite[Section 3.9, Lemma 1]{simon-book}) and thus the we have
    \begin{align*}
        \mathscr{D}_{\mathbb{B}^n}(\varphi)=\Theta(u).
    \end{align*}
    Hence, we have
    \begin{align*}
        \mathscr{E}_{\mathbb{B}^n}(u_{\rho}\,;\varphi)=\mathscr{D}_{\mathbb{B}^n}(u_{\rho})-\mathscr{D}_{\mathbb{B}^n}(\varphi)=\Theta(\rho\,;u)-\Theta(u)
    \end{align*}
    and by the monotonicity formula we get $\mathscr{E}_{\mathbb{B}^n}(u_{\rho}\,;\varphi)\le 0$ for every $\rho\in[1,+\infty)$. Thus, we got that
    \begin{align*}
        [1,+\infty)\ni\rho\mapsto\mathscr{E}_{\mathbb{B}^n}(u_{\rho}\,;u_{\infty})=\Theta(\rho\,;u)-\Theta(u)\in[\Theta(1\,;u)-\Theta(u),0]\subset(-\infty,0]
    \end{align*}
    is a non-decreasing and non-positive function on $[1,+\infty)$. As such, it is differentiable $\mathcal{L}^1$-a.e. with respect to $\rho$ and we have 
    \begin{align*}
        \frac{d}{d\rho}\mathscr{E}_{\mathbb{B}^n}(u_{\rho}\,;\varphi)&\ge-\frac{n-2}{\rho^{n-1}}\int_{B_{\rho}(0)}\lvert du\rvert^2\, d\mathcal{L}^n+\frac{1}{\rho^{n-2}}\int_{\partial B_{\rho}(0)}\lvert du\rvert^2\, d\mathscr{H}^{n-1}\\
        &=\frac{n-2}{\rho}\big(\mathscr{E}_{\mathbb{B}^n}(\tilde u_{\rho}\,;\varphi)-\mathscr{E}_{\mathbb{B}^n}(u_{\rho}\,;\varphi)\big)\\
        &\quad-\frac{n-2}{\rho}\mathscr{D}_{\mathbb{B}^n}(\tilde u_{\rho})+\frac{1}{\rho^{n-2}}\int_{\partial B_{\rho}(0)}\lvert du\rvert^2\, d\mathscr{H}^{n-1}\\
        &=\frac{n-2}{\rho}\big(\mathscr{E}_{\mathbb{B}^n}(\tilde u_{\rho}\,;\varphi)-\mathscr{E}_{\mathbb{B}^n}(u_{\rho}\,;\varphi)\big)\\
        &\quad-\frac{1}{\rho}\int_{\mathbb{S}^{n-1}}\lvert du_{\rho}\rvert^2\,d\mathscr{H}^{n-1}+\frac{1}{\rho^{n-2}}\int_{\partial B_{\rho}(0)}\lvert du\rvert^2\, d\mathscr{H}^{n-1}\\
        &=\frac{n-2}{\rho}\big(\mathscr{E}_{\mathbb{B}^n}(\tilde u_{\rho}\,;\varphi)-\mathscr{E}_{\mathbb{B}^n}(u_{\rho}\,;\varphi)\big)\\
        &\quad-\frac{1}{\rho^{n-2}}\int_{\partial B_{\rho}(0)}\lvert du\rvert^2\,d\mathscr{H}^{n-1}+\frac{1}{\rho^{n-2}}\int_{\partial B_{\rho}(0)}\lvert du\rvert^2\, d\mathscr{H}^{n-1}\\
        &=\frac{n-2}{\rho}\big(\mathscr{E}_{\mathbb{B}^n}(\tilde u_{\rho}\,;\varphi)-\mathscr{E}_{\mathbb{B}^n}(u_{\rho}\,;\varphi)\big)\ge 0,
    \end{align*} 
    where the last inequality follows from the fact that $u$ is energy minimizing. We have then shown property (1) in \cite[Assumptions 2.1]{SymmetricEdelenSpolaorVelichkov}. Note that property (2) in \cite[Assumptions 2.1]{SymmetricEdelenSpolaorVelichkov} is exactly the symmetric log-epiperimetric inequality 
    \begin{align*}
		\mathscr{E}_{\mathbb{B}^n}(\hat u\,;\tilde u_0)\le\mathscr{E}_{\mathbb{B}^n}(\tilde u\,;\tilde u_0)-\varepsilon\lvert\mathscr{E}_{\mathbb{B}^n}(\tilde u\,;\tilde u_0)\rvert^{1+\gamma},
	\end{align*}
    that we have shown in Theorem \ref{Theorem: (log)-epiperimetric inequality for harmonic maps}. Then we can apply \cite[Corollary 2.6]{SymmetricEdelenSpolaorVelichkov} or argue directly exactly as in Section 5 and Appendix A to obtain that there exists $\alpha>0$ such that
    \begin{align*}
        \|u_{\rho}-\varphi\|_{L^2(\mathbb{S}^{n-1})}\le\begin{cases}
            C\log(\rho)^{\frac{\gamma-1}{2\gamma}} & \mbox{ if } \gamma>0\\
            C\rho^{-\alpha} & \mbox{ if } \gamma=0
        \end{cases} \qquad\forall\rho\in (1,+\infty).
    \end{align*}
    This immediately implies that $\varphi$ is the unique limit of the blow-down family $\{u_{\rho}\}_{\rho\in[1,+\infty)}$, i.e. the unique tangent map of $u$ at infinity.
\appendix
\section{A criterion for the uniqueness of tangent maps} \label{sec: appendix criterion}
\allowdisplaybreaks
The aim of this last section is to prove a standard argument in the literature allowing us to infer uniqueness of tangent maps to a stationary harmonic map $u$ at some point $y$ from a sufficiently fast decay of the energy density $\Theta(\rho,y;u)$ to its limit, usually referred to as \textit{Dini continuity}. We reproduce the argument here for the sake of completeness. 
\begin{proposition}\label{Proposition: Dini decrease implies uniqueness of tangent map}
    Let $N\subset\R^k$ be a closed smooth submanifold in $\R^k$ and let $\Omega\subset\mathbb{R}^n$ be any open set. Let $u\in W^{1,2}(\Omega,N)$ be a stationary harmonic map on $\Omega$ and let $y\in\Omega$. Assume that there exist $\rho_0\in(0,\dist(y,\partial\Omega))$ and an increasing function $\phi:(0,\rho_0)\to(0,+\infty)$ such that 
    \begin{align}\label{Equation: assumptions bound}
        \Theta(\rho,y;u)-\Theta(y;u)\le\phi(\rho) \qquad\forall\,\rho\in(0,\rho_0)
    \end{align}
    and 
    \begin{align}\label{Equation: integrability condition around the origin}
        \int_0^{\rho_0}\frac{\sqrt{\phi(\rho)}}{\rho}d\mathcal{L}^1(\rho)<+\infty.
    \end{align}
    Then, the tangent map to $u$ at $y$ is unique.
    \end{proposition}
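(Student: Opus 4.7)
The plan is to translate the Dini-type decay \eqref{Equation: assumptions bound}, \eqref{Equation: integrability condition around the origin} into Cauchy-ness of the spherical traces $v_\rho(\omega) := u(y + \rho \omega)$ in $L^2(\mathbb{S}^{n-1})$, via the classical monotonicity formula for stationary harmonic maps. Once the family $\{v_\rho\}$ has an $L^2$-limit, the compact embedding $W^{1,2}(\mathbb{B}^n) \hookrightarrow L^2(\mathbb{B}^n)$ pins down any tangent map (which is $0$-homogeneous) by its trace, thereby forcing uniqueness.

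The first step is to record the monotonicity identity for stationary harmonic maps in the form
\begin{equation*}
    \Theta(\rho_2, y; u) - \Theta(\rho_1, y; u) = 2\int_{\rho_1}^{\rho_2}\!\int_{\mathbb{S}^{n-1}}|\partial_r u(y+r\omega)|^2\, r\, d\mathscr{H}^{n-1}(\omega)\, d\mathcal{L}^1(r),
\end{equation*}
so that the hypothesis \eqref{Equation: assumptions bound} controls the weighted $L^2$-norm of the radial derivative on any annulus centered at $y$ by $\tfrac{1}{2}\phi(\rho_2)$. Since $\partial_\rho v_\rho(\omega) = (\partial_r u)(y+\rho\omega)$, the fundamental theorem of calculus together with a weighted Cauchy--Schwarz inequality (distributing a factor $\sqrt{r}\cdot r^{-1/2}$) yields, for the dyadic scales $\rho_j := 2^{-j}\rho_0$,
\begin{equation*}
    \|v_{\rho_j} - v_{\rho_{j+1}}\|_{L^2(\mathbb{S}^{n-1})}^2 \leq (\log 2)\int_{\rho_{j+1}}^{\rho_j}\!\int_{\mathbb{S}^{n-1}}|\partial_r u|^2\, r\, d\mathscr{H}^{n-1}\, d\mathcal{L}^1 \leq C\,\phi(\rho_j).
\end{equation*}

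The second step is to sum these bounds over $j$. Since $\phi$ is increasing, $\sqrt{\phi(\rho_j)} \leq \frac{1}{\log 2}\int_{\rho_{j+1}}^{\rho_j} \frac{\sqrt{\phi(\rho)}}{\rho}\, d\mathcal{L}^1(\rho)$, and telescoping gives
\begin{equation*}
    \sum_{j\ge 0} \|v_{\rho_j} - v_{\rho_{j+1}}\|_{L^2(\mathbb{S}^{n-1})} \leq C\int_0^{\rho_0}\frac{\sqrt{\phi(\rho)}}{\rho}\, d\mathcal{L}^1(\rho) < +\infty
\end{equation*}
by the Dini assumption \eqref{Equation: integrability condition around the origin}. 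Hence $\{v_{\rho_j}\}$ is Cauchy in $L^2(\mathbb{S}^{n-1})$ and converges to some $v_\infty \in L^2(\mathbb{S}^{n-1})$. An analogous dyadic interpolation argument applied to arbitrary $\rho \in (\rho_{j+1}, \rho_j)$ shows that the whole family $\{v_\rho\}_{\rho \in (0,\rho_0)}$ converges to the same $v_\infty$ in $L^2(\mathbb{S}^{n-1})$ as $\rho \downarrow 0$.

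Finally, any tangent map $\varphi$ of $u$ at $y$ arises as a weak $W^{1,2}$ subsequential limit of $u_{y,\rho_i}$, and by Rellich--Kondrachov this is strong in $L^2$; in particular $\varphi|_{\mathbb{S}^{n-1}}$ equals $v_\infty$ up to $\mathscr{H}^{n-1}$-null sets. Since every tangent map is $0$-homogeneous (by the standard consequence of monotonicity), it is fully determined by its trace on $\mathbb{S}^{n-1}$, so $\varphi$ is the unique tangent map. The main bookkeeping obstacle is balancing the powers of $r$ so that the elementary $L^2$-estimate for $v_{\rho_j} - v_{\rho_{j+1}}$ matches the $r$-weighted integral appearing in the monotonicity formula; the weighted Cauchy--Schwarz inequality above is exactly what makes the scaling consistent on each dyadic annulus.
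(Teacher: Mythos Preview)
Your argument follows essentially the same route as the paper's: monotonicity formula $\Rightarrow$ weighted Cauchy--Schwarz on dyadic annuli $\Rightarrow$ summability via the Dini condition $\Rightarrow$ Cauchy-ness of the spherical traces, then $0$-homogeneity to identify the full tangent map. The paper organizes the same steps slightly differently (it compares two tangent maps directly in $L^1(\mathbb{S}^{n-1})$ rather than proving that $\{v_\rho\}$ is Cauchy in $L^2$), but the content is the same.

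Two small slips to clean up. First, the inequality $\sqrt{\phi(\rho_j)} \le \frac{1}{\log 2}\int_{\rho_{j+1}}^{\rho_j} \frac{\sqrt{\phi(\rho)}}{\rho}\,d\mathcal{L}^1(\rho)$ is backwards: since $\phi$ is increasing, on $(\rho_{j+1},\rho_j)$ one has $\phi(\rho)\le\phi(\rho_j)$, not $\ge$. You need instead $\sqrt{\phi(\rho_j)} \le \frac{1}{\log 2}\int_{\rho_j}^{\rho_{j-1}} \frac{\sqrt{\phi(\rho)}}{\rho}\,d\mathcal{L}^1(\rho)$ (shift the interval up by one dyadic step), which is a trivial reindexing. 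Second, Rellich--Kondrachov gives $u_{y,\rho_i}\to\varphi$ only in $L^2(\mathbb{B}^n)$, which does not by itself pin down $\varphi|_{\mathbb{S}^{n-1}}$; you need either the weak continuity of the trace operator $W^{1,2}(\mathbb{B}^n)\to L^2(\mathbb{S}^{n-1})$ (this is what the paper invokes) or the observation that $\int_{\mathbb{B}^n}|u_{y,\rho_i}-\varphi|^2 = \int_0^1 r^{n-1}\|v_{r\rho_i}-\varphi|_{\mathbb{S}^{n-1}}\|_{L^2(\mathbb{S}^{n-1})}^2\,dr$ combined with your full-family convergence $v_\rho\to v_\infty$. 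Either fix is immediate.
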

    \begin{proof}
        First, recall the monotonicity formula for stationary harmonic maps (whose proof can be found for instance in \cite[Section 1, Equation (1.7)]{lin-gradient-estimates}):
        \begin{align}\label{Equation: monotonicity for stationary harmonic maps}
            \frac{1}{\rho^{n-2}}\int_{B_{\rho}(y)}\lvert du\rvert^2\,d\mathcal{L}^{n}-\frac{1}{\sigma^{n-2}}\int_{B_{\sigma}(y)}\lvert du\rvert^2\,d\mathcal{L}^{n}=\int_{B_{\rho}(y)\smallsetminus B_{\sigma}(y)}\frac{1}{\lvert\,\cdot\,-y\rvert^{n-2}}\bigg\lvert\frac{\partial u}{\partial\nu_y}\bigg\rvert^2\, d\mathcal{L}^n
        \end{align}
        for every $0<\sigma<\rho<\dist(y,\partial\Omega)$, where we have defined
        \begin{align*}
            \nu_{y}:=\frac{\,\cdot\,-y}{\lvert\,\cdot\,-y\rvert} \qquad\mbox{ on } \Omega\smallsetminus\{y\}.
        \end{align*}
        By H\"older inequality, \eqref{Equation: monotonicity for stationary harmonic maps} and the bound \eqref{Equation: assumptions bound} in the assumptions, for every $0<\sigma<\rho<\rho_0$ we have
        \begin{align*}
            \int_{B_{\rho}(y)\smallsetminus B_{\sigma}(y)}&\frac{1}{\lvert\,\cdot\,-y\rvert^{n-1}}\bigg\lvert\frac{\partial u}{\partial\nu_y}\bigg\rvert\, d\mathcal{L}^n\\
            &\le\bigg(\int_{B_{\rho}(y)\smallsetminus B_{\sigma}(y)}\frac{1}{\lvert\,\cdot\,-y\rvert^{n-2}}\bigg\lvert\frac{\partial u}{\partial\nu_y}\bigg\rvert^2\, d\mathcal{L}^n\bigg)^{\frac{1}{2}}\bigg(\int_{B_{\rho}(y)\smallsetminus B_{\sigma}(y)}\frac{1}{\lvert\,\cdot\,-y\rvert^{n}}\, d\mathcal{L}^n\bigg)^{\frac{1}{2}}\\
            &\le C\big((\log\rho-\log\sigma)(\Theta(\rho,y;u)-\Theta(y;u))\big)^{\frac{1}{2}}\\
            &\le C\big((\log\rho-\log\sigma)\phi(\rho)\big)^{\frac{1}{2}}.
        \end{align*}
        Fix any $0<\sigma<\rho<\rho_0$ and let $k\in\N$ be such that  $\rho/2^k\le\sigma$. From the previous estimate, for every $i\in\N$ we get 
        \begin{align*}
             \int_{B_{\rho/2^{i}}(y)\smallsetminus B_{\rho/2^{i+1}}(y)}\frac{1}{\lvert\,\cdot\,-y\rvert^{n-1}}\bigg\lvert\frac{\partial u}{\partial\nu_y}\bigg\rvert\, d\mathcal{L}^n\le \tilde C\sqrt{\phi\bigg(\frac{\rho}{2^i}\bigg)}
        \end{align*}
        Then we have
        \begin{align}\label{Equation: estimate uniqueness of blow ups}
        \begin{split}
             \int_{B_{\rho}(y)\smallsetminus B_{\sigma}(y)}\frac{1}{\lvert\,\cdot\,-y\rvert^{n-1}}\bigg\lvert\frac{\partial u}{\partial\nu_y}\bigg\rvert\, d\mathcal{L}^n&\le\int_{B_{\rho}(y)\smallsetminus B_{\rho/2^{k}}(y)}\frac{1}{\lvert\,\cdot\,-y\rvert^{n-1}}\bigg\lvert\frac{\partial u}{\partial\nu_y}\bigg\rvert\, d\mathcal{L}^n\\
             &=\sum_{i=0}^{k-1}\int_{B_{\rho/2^{i}}(y)\smallsetminus B_{\rho/2^{i+1}}(y)}\frac{1}{\lvert\,\cdot\,-y\rvert^{n-1}}\bigg\lvert\frac{\partial u}{\partial\nu_y}\bigg\rvert\, d\mathcal{L}^n\\
             &\le\sum_{i=0}^{n-1}\sqrt{\phi\bigg(\frac{\rho}{2^i}\bigg)}=\sum_{i=0}^{n-1}\sqrt{\phi\bigg(\frac{\rho}{2^i}\bigg)}\frac{2^i}{\rho}\frac{\rho}{2^i}\\
             &\le\int_0^{\rho}\frac{\sqrt{\phi(t)}}{t}\,d\mathcal{L}^1(t).
        \end{split}
        \end{align}
    	Now, let $\varphi_1,\varphi_2$ be any two tangent maps to $u$ at the point $y\in\Omega$. By definition of tangent map, there exist sequences $\{\rho_i\}_{i\in\N}\subset(0,\rho_0)$ and $\{\sigma_i\}_{i\in\N}\subset(0,\rho_0)$ such that $\rho_i,\sigma_i\to 0^+$ and
    	\begin{align*}
    		&u_{y,\rho_i}\rightharpoonup\varphi_1 \qquad \text{and} \qquad u_{y,\sigma_i}\rightharpoonup\varphi_2
    	\end{align*}
    	weakly in $W^{1,2}(\mathbb{B}^n)$ as $i\to+\infty$. By the weak continuity of the trace operator, we have 
    	\begin{align*}
    		u_{y,\rho_i}|_{\mathbb{S}^{n-1}}\rightharpoonup\varphi_1|_{\mathbb{S}^{n-1}} \qquad \text{and} \qquad u_{y,\sigma_i}|_{\mathbb{S}^{n-1}}\rightharpoonup\varphi_2|_{\mathbb{S}^{n-1}}
    	\end{align*}
    	weakly in $L^2(\mathbb{S}^{n-1})$ as $i\to+\infty$.  Thus, by \eqref{Equation: estimate uniqueness of blow ups} and the coarea formula, we have
        \begin{align*}
            \int_{\mathbb{S}^{n-1}}\lvert\varphi_1-\varphi_2\rvert\, d\mathscr{H}^{n-1}&\le\liminf_{i\to+\infty}\int_{\mathbb{S}^{n-1}}\lvert u_{y,\rho_i}-u_{y,\sigma_i}\rvert\, d\mathscr{H}^{n-1}\\
            &=\liminf_{i\to+\infty}\int_{\mathbb{S}^{n-1}}\lvert u(\rho_ix+y)-u(\sigma_ix+y)\rvert\, d\mathscr{H}^{n-1}(x)\\
            &=\liminf_{i\to+\infty}\int_{\mathbb{S}^{n-1}}\bigg\lvert\int_{\sigma_i}^{\rho_i}\nabla u(\rho x+y)\cdot x\, d\mathcal{L}^1(\rho)\bigg\rvert\, d\mathscr{H}^{n-1}(x)\\
            &=\liminf_{i\to+\infty}\int_{\mathbb{S}^{n-1}}\int_{\sigma_i}^{\rho_i}\bigg\lvert\nabla u(\rho x+y)\cdot \frac{x}{\lvert x\rvert}\bigg\rvert\, d\mathcal{L}^1(\rho)\, d\mathscr{H}^{n-1}(x)\\
            &=\liminf_{i\to+\infty}\int_{\sigma_i}^{\rho_i}\int_{\mathbb{S}^{n-1}}\bigg\lvert\nabla u(\rho x+y)\cdot \frac{x}{\lvert x\rvert}\bigg\rvert\ d\mathscr{H}^{n-1}(x)\, d\mathcal{L}^1(\rho)\, \\
            &=\liminf_{i\to+\infty}\int_{\sigma_i}^{\rho_i}\int_{\partial B_{\rho}(y)}\frac{1}{\rho^{n-1}}\bigg\lvert\nabla u(z)\cdot \frac{z-y}{\lvert z-y\rvert}\bigg\rvert\ d\mathscr{H}^{n-1}(z)\, d\mathcal{L}^1(\rho)\\
            &=\liminf_{i\to+\infty}\int_{B_{\rho_i}(y)\smallsetminus B_{\sigma_i}(y)}\frac{1}{\lvert\,\cdot\,-y\rvert^{n-1}}\bigg\lvert\frac{\partial u}{\partial\nu_y}\bigg\rvert\, d\mathcal{L}^n\\
            &\le\liminf_{i\to+\infty}\int_0^{\rho_i}\frac{\sqrt{\phi(t)}}{t}\,d\mathcal{L}^1(t)=0,
        \end{align*}
    where the last equality follows from the assumption \eqref{Equation: integrability condition around the origin}. Hence, we have $\varphi_1|_{\mathbb{S}^{n-1}}=\varphi_2|_{\mathbb{S}^{n-1}}$. Since both $\varphi_1$ and $\varphi_2$ are $0$-homogeneous functions, we conclude that $\varphi_1=\varphi_2$ and the statement follows. 
    \end{proof}

\section{Epiperimetric and \L ojasiewicz--Simon inequalities} \label{appendix: EP ineq and LS ineq}
One of the strengths of the approach that we use in the proof of the epiperimetric equality for harmonic maps (see Theorem \ref{Theorem: (log)-epiperimetric inequality for harmonic maps}) is that we don't need to use any ``infinite dimensional'' \L ojasiewicz inequality. Indeed, by exploiting the Lyapunov--Schmidt reduction for the Dirichlet energy we can reduce to run a heat flow on the \underline{finite dimensional} kernel of the second variation of the energy and then leverage on the standard \L ojasiewicz inequality for real-analytic functionals on $\R^k$ (see Lemma \ref{lemma: finite dimensional Lojasiewicz inequality}).

In the framework of harmonic maps though, the following infinite dimensional \L ojasiewicz inequality was obtained by Simon in \cite{AsymptoticSimon} (see also \cite[Section 3.14]{simon-book} for a proof of this statement).
\begin{proposition}[\L ojasiewicz--Simon inequality for the Dirichlet energy on the sphere]\label{Proposition: Lojasiewicz-Simon inequality}
    Let $N\subset\R^k$ be a closed real-analytic submanifold in $\R^k$ and let $n\in\N$ be such that $n\ge 3$. Let $u_0\in C^{\infty}(\mathbb{S}^{n-1},N)$ be a harmonic map on $\mathbb{S}^{n-1}$. Then, there exist $\delta,C>0$ and $\beta\in(0,\frac{1}{2}]$ depending on $u_0$ such that 
    \begin{align*}
        \lvert\mathscr{E}_{\mathbb{S}^{n-1}}(u\,;u_0)\rvert^{1-\beta}\le C\|\nabla\mathscr{E}_{\mathbb{S}^{n-1}}(u\,;u_0)\|_{L^2(\mathbb{S}^{n-1})} \qquad\forall u\in\mathcal{B}_{\delta}(u_0)\subset C^{2,\alpha}(u_0^*TN).
    \end{align*}
\end{proposition}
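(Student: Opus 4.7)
The plan is to combine the Lyapunov–Schmidt reduction of Lemma \ref{Lyapunov-Schmidt reduction} with the finite-dimensional Łojasiewicz inequality of Lemma \ref{lemma: finite dimensional Lojasiewicz inequality} applied to the reduced analytic functional $f$. By Lemma \ref{Lemma: the analyticity of energy discrepancy in a neighbourhood of the origin}, for $u$ sufficiently close to $u_0$ in $C^{2,\alpha}$ we can write $\mathscr{E}_{\mathbb{S}^{n-1}}(u;u_0) = \mathscr{F}_{\mathbb{S}^{n-1}}(\varphi_u;u_0)$ for a unique $\varphi_u \in \mathcal{B}_{\delta_0/2}(0) \subset C^{2,\alpha}(u_0^*TN)$, and a standard computation shows that $\|\nabla\mathscr{E}_{\mathbb{S}^{n-1}}(u;u_0)\|_{L^2(\mathbb{S}^{n-1})}$ is comparable to $\|\nabla\mathscr{F}_{\mathbb{S}^{n-1}}(\varphi_u;u_0)\|_{L^2(\mathbb{S}^{n-1})}$ up to a bounded factor coming from $d\pi_N$. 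So it suffices to establish the inequality for $\mathscr{F}_{\mathbb{S}^{n-1}}$ in a neighborhood of $0 \in C^{2,\alpha}(u_0^*TN)$.

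The first step is to decompose $\varphi_u = \phi_K + F(\phi_K) + \varphi^\perp$ with $\phi_K := P_K \varphi_u \in K$ and $\varphi^\perp := P_{K^\perp}\varphi_u - F(\phi_K) \in K^\perp$, and to introduce the reduced analytic function $f \colon U \subset K \to \mathbb{R}$ defined by $f(\phi) := \mathscr{F}_{\mathbb{S}^{n-1}}(\phi + F(\phi); u_0)$. Taylor expanding $\mathscr{F}_{\mathbb{S}^{n-1}}$ around the critical-manifold point $\phi_K + F(\phi_K)$ in the direction $\varphi^\perp$, and using both $P_{K^\perp}\nabla\mathscr{F}_{\mathbb{S}^{n-1}}(\phi_K + F(\phi_K); u_0) = 0$ from Lemma \ref{Lyapunov-Schmidt reduction}(ii) and $\varphi^\perp \in K^\perp$, one obtains
\begin{align*}
    \mathscr{F}_{\mathbb{S}^{n-1}}(\varphi_u;u_0) - f(\phi_K) = \nabla^2\mathscr{F}_{\mathbb{S}^{n-1}}(\phi_K + F(\phi_K) + s\varphi^\perp;u_0)[\varphi^\perp,\varphi^\perp]
\end{align*}
for some $s \in [0,1]$. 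Since the linearized operator $\mathcal{L}_{\mathscr{F}}$ from Remark \ref{Remark: ellipticity of the second variation of the Dirichlet energy discrepancy} is a self-adjoint elliptic operator on the compact sphere $\mathbb{S}^{n-1}$ with kernel exactly $K$, it has a positive spectral gap on $K^\perp$, and by local Lipschitz continuity of the Hessian around $0$ this yields
\begin{align*}
    |\mathscr{F}_{\mathbb{S}^{n-1}}(\varphi_u;u_0) - f(\phi_K)| \le C\|\varphi^\perp\|_{W^{1,2}(\mathbb{S}^{n-1})}^2, \qquad \|\varphi^\perp\|_{W^{1,2}(\mathbb{S}^{n-1})} \le C\|P_{K^\perp}\nabla\mathscr{F}_{\mathbb{S}^{n-1}}(\varphi_u;u_0)\|_{L^2(\mathbb{S}^{n-1})}.
\end{align*}

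The second step is to apply Lemma \ref{lemma: finite dimensional Lojasiewicz inequality} to the analytic function $f$ at its critical point $0 \in \mathbb{R}^{\dim K}$ (noting $f(0) = 0 = \nabla f(0)$ by Lemma \ref{Lyapunov-Schmidt reduction}(i) and harmonicity of $u_0$), obtaining an exponent $\beta \in (0, 1/2]$ and a constant $K_0 > 0$ such that $|f(\phi_K)|^{1-\beta} \le K_0 |\nabla f(\phi_K)|$ in a neighborhood of $0$. By the identification of Lemma \ref{Lyapunov-Schmidt reduction}(iii) (interpreting $\mathfrak{q} = f$ as a real-valued reduced functional), the gradient $\nabla f(\phi_K)$ is represented by $P_K \nabla\mathscr{F}_{\mathbb{S}^{n-1}}(\phi_K + F(\phi_K); u_0)$. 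A further Taylor expansion of $\nabla\mathscr{F}_{\mathbb{S}^{n-1}}$ in $\varphi^\perp$, combined with Lemma \ref{Lyapunov-Schmidt reduction}(iv) to control $F$ and its derivative, gives
\begin{align*}
    \|P_K\nabla\mathscr{F}_{\mathbb{S}^{n-1}}(\phi_K + F(\phi_K); u_0)\|_{L^2(\mathbb{S}^{n-1})} \le \|P_K \nabla\mathscr{F}_{\mathbb{S}^{n-1}}(\varphi_u; u_0)\|_{L^2(\mathbb{S}^{n-1})} + C\|\varphi^\perp\|_{W^{1,2}(\mathbb{S}^{n-1})}.
\end{align*}

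To conclude, I would combine the two estimates via $|a+b|^{1-\beta} \le |a|^{1-\beta} + |b|^{1-\beta}$: using $\beta \le 1/2$, and hence $2(1-\beta) \ge 1$, one has $\|\varphi^\perp\|_{W^{1,2}}^{2(1-\beta)} \le C\|\varphi^\perp\|_{W^{1,2}}$ once $\delta$ is small enough to guarantee $\|\varphi^\perp\|_{W^{1,2}} \le 1$, and this quantity is in turn dominated by the full $L^2$-norm of $\nabla\mathscr{F}_{\mathbb{S}^{n-1}}(\varphi_u; u_0)$ by the first step. Putting all the pieces together yields the desired Łojasiewicz–Simon inequality. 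The main obstacle, and the place where the proof is technically delicate, is the careful bookkeeping of error terms needed to transfer the finite-dimensional gradient estimate for $f$ \emph{at the critical-manifold point} $\phi_K + F(\phi_K)$ into a gradient estimate for the full functional $\mathscr{F}_{\mathbb{S}^{n-1}}$ \emph{at the nearby point} $\varphi_u$: this requires interweaving the regularity estimate of Lemma \ref{Lyapunov-Schmidt reduction}(iv) with Schauder-type bounds on $\mathcal{L}_{\mathscr{F}}$, so that one may freely pass between the $W^{1,2}$ norm (appearing in the spectral gap) and the $C^{2,\alpha}$ norm (governing the Lipschitz constant of the Hessian).
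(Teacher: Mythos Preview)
The paper does not supply its own proof of this proposition: it simply records the statement and refers to Simon's original work \cite{AsymptoticSimon} and to \cite[Section~3.14]{simon-book} for a proof. Your outline---Lyapunov--Schmidt reduction to a finite-dimensional analytic function $f$ on $K$, finite-dimensional \L ojasiewicz applied to $f$, and then transferring the gradient estimate back to $\mathscr{F}_{\mathbb{S}^{n-1}}$ via the spectral gap of $\mathcal{L}_{\mathscr{F}}$ on $K^\perp$---is precisely the standard argument presented in those references, and is correct in substance. The technical point you flag at the end (passing between $W^{1,2}$ and $C^{2,\alpha}$ norms when bounding the Hessian error terms) is genuine but routine once one invokes elliptic regularity for $\mathcal{L}_{\mathscr{F}}$ on the compact sphere; this is exactly how Simon handles it.
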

Notice that, as suggested by \cite[Proposition 3.1]{colombo-spolaor-velichkov-3}, by exploiting the previous Proposition \eqref{Proposition: Lojasiewicz-Simon inequality} we can significantly simplify the proof of Theorem \ref{Theorem: (log)-epiperimetric inequality for harmonic maps} as follows. As by the assumptions of Theorem \ref{Theorem: (log)-epiperimetric inequality for harmonic maps}, let $N\subset\R^k$ be a closed real-analytic submanifold in $\R^k$ and let $n\in\N$ be such that $n\ge 3$. Let $u_0\in C^{\infty}(\mathbb{S}^{n-1},N)$ be a harmonic map on $\mathbb{S}^{n-1}$ and let $C,\delta>0$, $\beta\in(0,\frac{1}{2}]$ depending on $u_0$ be the constants given by Proposition \ref{Proposition: Lojasiewicz-Simon inequality}. Let $u\in\mathcal{B}_{\delta}(u_0)$ and notice that if 
\begin{align*}
    \mathscr{E}_{\mathbb{B}^n}(\tilde u\,;\tilde u_0)\le 0
\end{align*}
then choosing $\hat u:=\tilde u$ trivially gives \eqref{equation: main epiperimetric inequality}. Hence, we can assume that 
\begin{align*}
    \mathscr{E}_{\mathbb{B}^n}(\tilde u\,;\tilde u_0)=\frac{1}{n-2}\mathscr{E}_{\mathbb{S}^{n-1}}(u\,;u_0)>0,
\end{align*}
where the first equality follows from Lemma \ref{Lemma: slicing lemma}. By the existence of short-time smooth solutions for the harmonic map heat flow established in \cite{eells-sampson} (see also \cite[Theorem 5.2.1]{fanghua-lin-book}), there exists $0<\eps_0<\frac{1}{n-2}$ such that $U:[0,\eps_0]\to C^{\infty}(\mathbb{S}^{n-1},N)$ is a smooth solution of 
\begin{align*}
\begin{cases}
    U'(t)+\nabla\mathscr{D}_{\mathbb{S}^{n-1}}(U(t))=0\\
    U(0)=u
\end{cases}
\end{align*}
Moreover, by possibly choosing $\eps_0$ smaller, we can assume that 
\begin{align}\label{Equation: Lojasiewicz-Simon 2}
    \mathscr{E}_{\mathbb{S}^{n-1}}(u\,;u_0)\le 2\mathscr{E}_{\mathbb{S}^{n-1}}(U(t)\,;u_0) \qquad\forall\,t\in[0,\eps_0]
\end{align}
with equality just if $t=\eps_0$ (notice that $t\mapsto\mathscr{E}_{\mathbb{S}^{n-1}}(U(t)\,;u_0)$ is continuous and non-increasing in $t$). We let $\tilde U:[0,+\infty)\to C^{\infty}(\mathbb{S}^{n-1},N)$ be the given by
\begin{align*}
    \tilde U(t):=\begin{cases}
        U(t) & \mbox{ for every } 0\le t\le\eps_0\\
        U(\eps_0) & \mbox{ for every } t>\eps_0.
    \end{cases}
\end{align*}
Then we define the competitor $\hat u\in W^{1,2}(\mathbb{B}^n,N)$ in polar coordinates $(\rho,\theta)\in(0,+\infty)\times\mathbb{S}^{n-1}$ by 
\begin{align*}
    \hat u(\rho,\theta):=\tilde U(-\eps_0\log(\rho),\theta) \qquad\forall\,(\rho,\theta)\in(0,+\infty)\times\mathbb{S}^{n-1}.
\end{align*}
By Lemma \ref{Lemma: slicing lemma}, we have 
\allowdisplaybreaks
\begin{align}\label{Equation: Lojasiewicz-Simon 1}
    \nonumber
    \mathscr{E}_{\mathbb{B}^n}(\hat u\,;\tilde u)&=\int_0^1\mathscr{E}_{\mathbb{S}^{n-1}}(\hat u(\rho,\,\cdot\,)\,;u)\rho^{n-3}\, d\mathcal{L}^1(\rho)+\int_0^1\int_{\mathbb{S}^{n-1}}\lvert\partial_{\rho}\hat u(\rho,\theta)\rvert^2\, d\mathscr{H}^{n-1}(\theta)\rho^{n-1}\, d\mathcal{L}^1(\rho)\\
    \nonumber
    &=\int_0^1\mathscr{E}_{\mathbb{S}^{n-1}}(\tilde U(-\eps_0\log(\rho),\,\cdot\,)\,;u)\rho^{n-3}\, d\mathcal{L}^1(\rho)\\
    \nonumber
    &\quad+\eps_0^2\int_0^1\int_{\mathbb{S}^{n-1}}\lvert\tilde U'(-\eps_0\log(\rho),\theta)\rvert^2\, d\mathscr{H}^{n-1}(\theta)\rho^{n-3}\, d\mathcal{L}^1(\rho)\\
    \nonumber
    &=\frac{1}{\eps_0}\int_0^{+\infty}\mathscr{E}_{\mathbb{S}^{n-1}}(\tilde U(t,\,\cdot\,)\,;u)e^{-\frac{t(n-2)}{\eps_0}}\, d\mathcal{L}^1(t)\\
    \nonumber
    &\quad+\eps_0\int_0^{+\infty}\int_{\mathbb{S}^{n-1}}\lvert\tilde U'(t,\theta)\rvert^2\, d\mathscr{H}^{n-1}(\theta)e^{-\frac{t(n-2)}{\eps_0}}\, d\mathcal{L}^1(t)\\
    \nonumber
    &=\frac{1}{\eps_0}\int_0^{+\infty}\big(\mathscr{D}_{\mathbb{S}^{n-1}}(\tilde U(t,\,\cdot\,))-\mathscr{D}_{\mathbb{S}^{n-1}}(\tilde U(0,\,\cdot\,))\big)e^{-\frac{t(n-2)}{\eps_0}}\, d\mathcal{L}^1(t)\\
    \nonumber
    &\quad+\eps_0\int_0^{\eps_0}\int_{\mathbb{S}^{n-1}}\lvert U'(t,\theta)\rvert^2\, d\mathscr{H}^{n-1}(\theta)e^{-\frac{t(n-2)}{\eps_0}}\, d\mathcal{L}^1(t)\\
    &=\frac{1}{\eps_0}\int_0^{+\infty}\int_{0}^{\min\{t,\eps_0\}}\nabla\mathscr{D}_{\mathbb{S}^{n-1}}(U(s,\,\cdot\,))[U'(s,\,\cdot\,)]\, d\mathcal{L}^1(s)e^{-\frac{t(n-2)}{\eps_0}}\, d\mathcal{L}^1(t)\\
    \nonumber
    &\quad+\eps_0\int_0^{\eps_0}\| U'(t)\|_{L^2(\mathbb{S}^{n-1})}^2e^{-\frac{t(n-2)}{\eps_0}}\, d\mathcal{L}^1(t)\\
    \nonumber
    &=\frac{1}{\eps_0}\int_0^{\eps_0}\nabla\mathscr{D}_{\mathbb{S}^{n-1}}(U(s,\,\cdot\,))[U'(s,\,\cdot\,)]\bigg(\int_{s}^{+\infty}e^{-\frac{t(n-2)}{\eps_0}}\, d\mathcal{L}^1(t)\bigg)\, d\mathcal{L}^1(s)\\
    \nonumber
    &\quad+\eps_0\int_0^{\eps_0}\|U'(t)\|_{L^2(\mathbb{S}^{n-1})}^2e^{-\frac{t(n-2)}{\eps_0}}\, d\mathcal{L}^1(t)\\
    \nonumber
    &=-\frac{1}{\eps_0}\int_0^{\eps_0}\hspace{-1mm}\bigg(\hspace{-1.5mm}-\frac{\eps_0}{n-2}\nabla\mathscr{D}_{\mathbb{S}^{n-1}}(U(s,\,\cdot\,))[U'(s,\,\cdot\,)]-\eps_0^2\|U'(s)\|_{L^2(\mathbb{S}^{n-1})}^2\bigg)e^{-\frac{s(n-2)}{\eps_0}}\, d\mathcal{L}^1(s)\\
    \nonumber
    &=-\frac{1}{\eps_0}\int_0^{\eps_0}\bigg(\frac{\eps_0}{n-2}-\eps_0^2\bigg)\|U'(s)\|_{L^2(\mathbb{S}^{n-1})}^2\bigg)e^{-\frac{s(n-2)}{\eps_0}}\, d\mathcal{L}^1(s)\\
    \nonumber
    &=-\tilde C\int_0^{\eps_0}\|\nabla\mathscr{D}_{\mathbb{S}^{n-1}}(U(s))\|_{L^2(\mathbb{S}^{n-1})}^2e^{-\frac{s(n-2)}{\eps_0}}\, d\mathcal{L}^1(s),
\end{align}
where we have let
\begin{align*}
    \tilde C:=\frac{1}{\eps_0}\bigg(\frac{\eps_0}{n-2}-\eps_0^2\bigg)>0.
\end{align*}
Now let $\tilde\eps\in(0,1)$ be a small constant to be chosen later and notice that, by \eqref{Equation: Lojasiewicz-Simon 1}, by Proposition \ref{Proposition: Lojasiewicz-Simon inequality} and by \eqref{Equation: Lojasiewicz-Simon 2} we have that
\begin{align*}
    \mathscr{E}_{\mathbb{B}^n}(\hat u\,;\tilde u_0)&-(1-\tilde\eps)\mathscr{E}_{\mathbb{B}^n}(\tilde u\,;\tilde u_0)\\
    &=-\tilde C\int_0^{\eps_0}\|\nabla\mathscr{D}_{\mathbb{S}^{n-1}}(U(s))\|_{L^2(\mathbb{S}^{n-1})}^2e^{-\frac{s(n-2)}{\eps_0}}\, d\mathcal{L}^1(s)+\tilde\eps\mathscr{E}_{\mathbb{B}^{n}}(\tilde u\,;\tilde u_0)\\
    &\le -\tilde CC^2\int_0^{\eps_0}\mathscr{E}_{\mathbb{S}^{n-1}}(U(s)\,; u_0)^{2-2\beta}e^{-\frac{s(n-2)}{\eps_0}}\, d\mathcal{L}^1(s)+\tilde\eps\mathscr{E}_{\mathbb{B}^{n}}(\tilde u\,;\tilde u_0)\\
    &\le -2^{2\beta-2}\tilde CC^2\int_0^{\eps_0}\mathscr{E}_{\mathbb{S}^{n-1}}(u\,;u_0)^{2-2\beta}e^{-\frac{s(n-2)}{\eps_0}}\, d\mathcal{L}^1(s)+\tilde\eps\mathscr{E}_{\mathbb{B}^{n}}(\tilde u\,;\tilde u_0)\\
    &\le -\bigg(\frac{n-2}{2}\bigg)^{2-2\beta}\tilde CC^2\int_0^{\eps_0}\mathscr{E}_{\mathbb{B}^{n}}(\tilde u\,;\tilde u_0)^{2-2\beta}e^{-\frac{s(n-2)}{\eps_0}}\, d\mathcal{L}^1(s)+\tilde\eps\mathscr{E}_{\mathbb{B}^{n}}(\tilde u\,;\tilde u_0)\\
    &\le-\bigg(\bigg(\frac{n-2}{2}\bigg)^{1-2\beta}\eps_0\tilde CC^2\big(1-e^{-(n-2)}\big)-\tilde\eps\mathscr{E}_{\mathbb{B}^{n}}(\tilde u\,;\tilde u_0)^{2\beta-1}\bigg)\mathscr{E}_{\mathbb{B}^n}(\tilde u\,;\tilde u_0)^{2-2\beta}.
\end{align*}
Now choosing $\tilde\eps:=\eps\mathscr{E}_{\mathbb{S}^{n-1}}(u\,;\tilde u_0)^{1-2\beta}$ in the previous inequality for some $\eps>0$ small enough depending on $u_0$ and $n$ we get 
\begin{align*}
    \mathscr{E}_{\mathbb{B}^n}(\hat u\,;\tilde u_0)&-(1-\eps\mathscr{E}_{\mathbb{B}^{n}}(\tilde u\,;\tilde u_0)^{1-2\beta})\mathscr{E}_{\mathbb{B}^n}(\tilde u\,;\tilde u_0)\le 0
\end{align*}
and, since we assumed that $\mathscr{E}_{\mathbb{B}^n}(\tilde u\,;\tilde u_0)>0$, the statement follows with $\gamma:=2\beta$.  
\bibliographystyle{amsalpha} 
\bibliography{references} 

\end{document}